\title{On the ring structure of spark characters}
\author{Ning Hao}
\subjclass[2000]{53C65 14F43}
\begin{document}
\maketitle

\newtheorem{thm}{Theorem}[section]
\newtheorem{lem}[thm]{Lemma}
\newtheorem{prop}[thm]{Proposition}
\newtheorem{cor}[thm]{Corollary}
\newtheorem{defn}[thm]{Definition}
\newtheorem{fact}[thm]{Fact}
\newtheorem{claim}[thm]{Claim}
\newtheorem{rem}[thm]{Remark}
\newtheorem{ex}[thm]{Example}
\newtheorem{rmk}[thm]{Remark}

\begin{abstract}
We give a new description of the ring structure on the differential characters of
a smooth manifold via the smooth hyperspark complex. We show the explicit product formula,
and as an application, calculate the product for differential characters of the unit circle.
Applying the presentation of spark classes by smooth hypersparks,
we give an explicit construction of the isomorphism between groups of
spark classes and the $(p,p)$ part of smooth Deligne cohomology groups
associated to a smooth manifold. We then give a new direct proof that
this is an isomorphism of ring structures.
\end{abstract}

{\bf \tableofcontents}

\section{Introduction}
In 2006, Harvey and Lawson \cite{HL1} introduced a homological machine ---
spark complexes and their associated groups of spark classes ---
to study secondary geometric invariants of smooth manifolds.
They introduced a large variety of spark complexes which
appear naturally in geometry, topology and physics and showed their
associated groups of spark classes are all naturally isomorphic. These classes
are collectively called Harvey-Lawson spark characters.
Harvey and Lawson also defined a ring structure on spark characters
via the de Rham-Federer spark complex and established the equivalence of
spark characters with the classical Cheeger-Simons differential characters.
In this paper, we give a new description of the ring structure on spark characters of
a smooth manifold via the smooth hyperspark complex.
As an application, we calculate the product of two degree $0$ characters of the unit circle.

Besides differential characters and spark characters, another ingredient of
secondary geometric invariant theory is Deligne cohomology which was invented by Deligne in 1970s.
In this paper, we focus on ``smooth Deligne cohomology", an analog of Deligne cohomology
defined on any smooth manifold. It is well known there is a natural isomorphism
between differential characters and the $(p,p)$ part of smooth Deligne cohomology i.e.
$H^p_{\mathcal{D}}(X,\mathbb{Z}(p)^{\infty})$.
In this paper, we shall construct an explicit isomorphism between spark characters
and the $(p,p)$ part of smooth Deligne cohomology groups. The equivalency of
smooth Deligne cohomology and differential characters follows as a corollary.
Moreover, using the ring structure we introduce here, we show
that this isomorphism is indeed a ring isomorphism.
We thereby produce a new geometrical definition of the product in smooth Deligne cohomology.

In \cite{HL2}, Harvey and Lawson introduced spark characters of level $p$ on a complex manifold.
These character groups contain analytic Deligne cohomology as subgroups.
We shall study the ring structure on spark characters of level $p$ in a sequel paper \cite{H1}.
Moreover, we shall define a product in Deligne cohomology with an analytic formula induced by
the product in spark characters, and show its equivalency with the original product.
In \cite{H2}, we shall study the Massey products in spark characters which induce Massey products in
Deligne cohomology.

This paper is organized as follows.
In Section 2, we review briefly the concept and basic properties
of homological spark complexes. Three main examples of homological spark complexes
follow in Section 3.
In Section 4, we study the cup product on the total complex of double complex
$\bigoplus_{p,q}C^p(\mathcal{U},\mathcal{E}^q)$ carefully and establish
the ring structure of spark characters via the smooth hyperspark complex. Then
we give explicit examples of the product in Section 5. We calculate the product
of two differential characters of degree $0$ on the unit circle. Using Fourier
expansion, we decompose a general degree $0$ character to a linear combination of
typical characters. The product of two general characters will be presented by
the coefficients of their Fourier expansions. Also, we calculate the product of
two characters of degree $1$ on a 3-dimensional manifold when one of them represents
a flat line bundle.
Applying the \v{C}ech resolution of smooth Deligne complex, we give an explicit
construction of isomorphism between smooth Deligne cohomology and spark characters
in Section 6. Moreover, we check the ring structures on them and show the isomorphism
is a ring isomorphism.

\noindent{\textbf{Acknowledgements.}} I am very grateful to my advisor H. Blaine Lawson
who introduced this subject to me. Also, I would like to thank Zhiwei Yun for helpful conversations.

\section{Homological Spark Complexes}
We introduce the definitions of a homological spark complex and its associated
group of homological spark classes. Note that all cochain complexes in this paper are bounded
cochain complexes of abelian groups.

\begin{defn}
A homological spark complex is a triple of cochain complexes
$(F^*,E^*,I^*)$ together with morphisms given by inclusions
$$I^* \hookrightarrow F^* \hookleftarrow E^*$$
such that
\begin{enumerate}
            \item $I^k\cap E^k={0}$ for $k>0$, $F^k=E^k=I^k=0$ for $k<0$,
            \item $H^*(E^*)\cong H^*(F^*)$.

          \end{enumerate}
\end{defn}
\begin{defn}
In a given spark complex $(F^*,E^*,I^*)$, a homological spark of degree $k$ is
an element $a\in F^k $ which satisfies the spark equation
$$da=e-r$$ where $e\in E^{k+1}$ and $r\in I^{k+1}$.

Two sparks $a$, $a'$ of degree $k$ are equivalent if $$a-a'=db+s$$ for some
$b\in F^{k-1}$ and $s\in I^k$.

The set of equivalence classes is called the group of spark classes
of degree $k$ and denoted by $\hat{\mathbf{H}}^k(F^*,E^*,I^*)$ or $\hat{\mathbf{H}}^k$
for short. Let $[a]$ denote the equivalence class containing the spark $a$.
\end{defn}

\begin{lem}
Each homological spark $a\in F^k$ uniquely determines $e \in E^{k+1}$ and
$r\in I^{k+1}$, and $de=dr=0$.
\end{lem}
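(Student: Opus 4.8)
The plan is to derive both claims directly from condition~(1) in the definition of a homological spark complex, namely $E^j\cap I^j=\{0\}$ for $j>0$. First I would record the only available input: since $a\in F^k$ is a spark, $da=e-r$ for some $e\in E^{k+1}$ and $r\in I^{k+1}$, and since $F^j=0$ for $j<0$ a spark necessarily has degree $k\ge 0$, so both $e$ and $r$ sit in degree $k+1\ge 1$.

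For the uniqueness statement, suppose $e-r=da=e'-r'$ with $e,e'\in E^{k+1}$ and $r,r'\in I^{k+1}$. Rearranging gives $e-e'=r-r'$; the left-hand side lies in $E^{k+1}$ and the right-hand side in $I^{k+1}$, so their common value lies in $E^{k+1}\cap I^{k+1}$, which is $\{0\}$ by condition~(1) since $k+1\ge 1>0$. Hence $e=e'$ and $r=r'$, so the spark $a$ determines $e$ and $r$ uniquely.

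For closedness, I would apply $d$ to the spark equation. Because $E^*$ and $I^*$ are subcomplexes of $F^*$, we have $de\in E^{k+2}$ and $dr\in I^{k+2}$; on the other hand $de-dr=d(e-r)=d(da)=0$, so $de=dr$ lies in $E^{k+2}\cap I^{k+2}$. Since $k+2\ge 2>0$, condition~(1) again gives $E^{k+2}\cap I^{k+2}=\{0\}$, whence $de=dr=0$.

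The argument is entirely formal, so there is no genuine obstacle; the only point requiring a moment's care is the degree bookkeeping, i.e.\ checking that the intersections one invokes really do occur in strictly positive degree, which holds because every spark has nonnegative degree and $d$ raises degree by one.
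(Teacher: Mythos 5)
Your proof is correct and follows the same route as the paper: uniqueness via $E^{k+1}\cap I^{k+1}=\{0\}$, and closedness by differentiating the spark equation so that $de=dr$ lands in $E^{k+2}\cap I^{k+2}=\{0\}$. You simply make explicit the degree bookkeeping that the paper leaves implicit.
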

\begin{proof}
Uniqueness of $e$ and $r$ is from the fact $I^k\cap E^k={0}$. Taking differential
on the spark equation, we get $de-dr=0$ which means $de=dr=0$.
\end{proof}

We now give the fundamental exact sequences associated to a homological spark complex
$(F^*,E^*,I^*)$. Let $Z_I^k(E^*)$ denote the space of cycles $e\in E^k$ which are $F^*$-homologous
to some $r\in I^k$, i.e. $e-r$ is exact in $F^k$.

Let $\hat{\mathbf{H}}^k_E$ denote the space of spark classes that can be represented by a homological
spark $a\in E^k$.
Let us also define $$H_I^k(F^*)\equiv \text{Image}\{H^k(I^*)\rightarrow H^k(F^*)\}\equiv \text{Ker}
\{H^k(F^*)\rightarrow H^k(F^*/I^*)\},$$
$$H^{k+1}(F^*,I^*)\equiv \text{Ker}\{H^{k+1}(I^*)\rightarrow H^{k+1}(F^*)\}\equiv
\text{Image} \{H^k(F^*/I^*)\rightarrow H^{k+1}(I^*)\}.$$

\begin{prop}\cite{HL1}
There exist well-defined surjective homomorphisms
$$\delta_1: \hat{\mathbf{H}}^k\rightarrow Z_I^{k+1}(E^*) \quad \text{and} \quad
\delta_2: \hat{\mathbf{H}}^k \rightarrow H^{k+1}(I^*)$$ given by
$$\delta_1([a])=e \quad \text {and} \quad \delta_2([a])=[r]$$ where $da=e-r$.

Moreover, associated to any spark complex $(F^*,E^*,I^*)$ is the commutative diagram

\xymatrix{& & & 0 \ar[d] & 0 \ar[d] & 0 \ar[d] &   \\
& & 0 \ar[r] & \frac{H^k(F^*)}{H^k_I(F^*)} \ar[r] \ar[d] &
\hat{\mathbf{H}}^k_E \ar[r] \ar[d] & dE^k \ar[r] \ar[d] & 0 \\
& & 0 \ar[r] & H^k(F^*/I^*) \ar[r] \ar[d] & \hat{\mathbf{H}}^k \ar[r]^{\delta_1}
\ar[d]^{\delta_2} & Z_I^{k+1}(E^*) \ar[r] \ar[d] & 0 \\
& & 0 \ar[r] & H^{k+1}(F^*,I^*) \ar[r]\ar[d] &
H^{k+1}(I^*) \ar[r] \ar[d] & H_I^{k+1}(F^*) \ar[r] \ar[d] & 0 \\
& & & 0 & 0 & 0 &\\}
whose rows and columns are exact.
\end{prop}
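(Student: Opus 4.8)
The plan is to establish the large diagram by assembling it from three short exact sequences and then verifying commutativity and exactness cell by cell. First I would fix the three columns. The middle column is the pair of maps $\delta_2\circ(\text{whatever produces }[a])$: more precisely, the left-hand column $0\to \frac{H^k(F^*)}{H^k_I(F^*)}\to H^k(F^*/I^*)\to H^{k+1}(F^*,I^*)\to 0$ is exact directly from the long exact sequence of the pair $(F^*,I^*)$ together with the very definitions of $H^k_I(F^*)$ and $H^{k+1}(F^*,I^*)$ given just above the statement — indeed $H^k(F^*/I^*)$ surjects onto $H^{k+1}(F^*,I^*)=\mathrm{Image}\{H^k(F^*/I^*)\to H^{k+1}(I^*)\}$ with kernel the image of $H^k(F^*)\to H^k(F^*/I^*)$, which is $H^k(F^*)/H^k_I(F^*)$ since $H^k_I(F^*)$ is exactly the kernel of that map. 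The right-hand column $0\to dE^k\to Z_I^{k+1}(E^*)\to H_I^{k+1}(F^*)\to 0$ is checked by hand: the map $Z_I^{k+1}(E^*)\to H_I^{k+1}(F^*)$ sends $e\mapsto$ (the class in $H^{k+1}(F^*)$ it represents, which lies in $H^{k+1}_I(F^*)$ precisely because $e$ is $F^*$-homologous to an element of $I^{k+1}$ by definition of $Z_I^{k+1}(E^*)$), its kernel is the set of $e$ that are exact in $F^*$, and for $e\in E^{k+1}$ exactness in $F^*$ is equivalent to $e\in dE^k$ using $H^*(E^*)\cong H^*(F^*)$ (axiom (2) of a spark complex) — this equivalence is the one genuinely non-formal input. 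The middle column $0\to \hat{\mathbf{H}}^k_E\to \hat{\mathbf{H}}^k\xrightarrow{\delta_2}H^{k+1}(I^*)\to 0$: surjectivity of $\delta_2$ and well-definedness are part of the cited Proposition's first half (or proved directly — given $[r]\in H^{k+1}(I^*)$, lift $r$ through $H^*(E^*)\cong H^*(F^*)$ to get $e$ with $e-r=da$), injectivity of $\hat{\mathbf{H}}^k_E\hookrightarrow\hat{\mathbf{H}}^k$ needs an argument that a spark representable in $E^k$ which is trivial in $\hat{\mathbf{H}}^k$ is already trivial as an element of $\hat{\mathbf{H}}^k_E$, and exactness at $\hat{\mathbf{H}}^k$ says $\delta_2[a]=0$ iff $[a]$ has a representative in $E^k$ — i.e. iff $r$ is exact in $I^{k+1}$, say $r=ds$ with $s\in I^k$, in which case $a-s$ is a spark in $F^k$ with $d(a-s)=e\in E^{k+1}$, and then $a-s$ must actually be adjustable to lie in $E^k$; this last point again uses axiom (2).

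Next I would handle the three rows. The top row $0\to \frac{H^k(F^*)}{H^k_I(F^*)}\to \hat{\mathbf{H}}^k_E\to dE^k\to 0$: the map to $dE^k$ is $[a]\mapsto da$ (for $a\in E^k$), surjective trivially; its kernel consists of classes of closed $a\in E^k$, and two closed elements of $E^k$ give the same class in $\hat{\mathbf{H}}^k$ iff they differ by $db+s$ with $b\in F^{k-1}$, $s\in I^k$, which one shows is equivalent to differing by something exact in $F^*$ plus an element of $H^k_I(F^*)$, giving exactly $H^k(F^*)/H^k_I(F^*)$ — here one uses $I^k\cap E^k=0$ (axiom (1)) to pin down $s$. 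The middle row $0\to H^k(F^*/I^*)\to \hat{\mathbf{H}}^k\xrightarrow{\delta_1}Z_I^{k+1}(E^*)\to 0$ is the second main statement of the cited Proposition: $\delta_1$ is surjective onto $Z_I^{k+1}(E^*)$ by definition of that group, and $\ker\delta_1$ is the set of sparks with $da=-r$ modulo equivalence, which is identified with $H^k(F^*/I^*)$ by sending $[a]\mapsto[a\bmod I^*]$ (well-defined since $d(a\bmod I)=0$, and an isomorphism by a diagram chase). The bottom row $0\to H^{k+1}(F^*,I^*)\to H^{k+1}(I^*)\to H_I^{k+1}(F^*)\to 0$ is just a stretch of the long exact sequence of the pair $(F^*,I^*)$ combined with $H^*(E^*)\cong H^*(F^*)$, split into the pieces named by the displayed definitions.

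Finally I would check that the nine squares commute — all maps are either the obvious inclusions/quotients, the connecting-type maps $a\mapsto da$, or $\delta_1,\delta_2$, and commutativity in each square is a one-line check from the spark equation $da=e-r$. The main obstacle, such as it is, is bookkeeping: making the identifications of $\ker\delta_1$ with $H^k(F^*/I^*)$ and of the top-left corner with $H^k(F^*)/H^k_I(F^*)$ precise, and repeatedly invoking axioms (1) and (2) at exactly the right moments to convert "exact in $F^*$" statements into "$E^*$-level" statements and to control the $I^*$-component. Since the Proposition is attributed to \cite{HL1}, I would in practice cite that reference for the two rows involving $\delta_1,\delta_2$ and only spell out the parts of the diagram (the outer rows and columns) that follow formally from the long exact sequence of the pair and the two spark-complex axioms.
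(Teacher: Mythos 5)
Your outline is essentially correct, and it reconstructs the standard Harvey--Lawson argument: the paper itself offers no proof of this proposition, citing \cite{HL1}, and the proof there proceeds exactly as you describe --- the left and bottom parts of the grid come from the long exact sequence of $0\to I^*\to F^*\to F^*/I^*\to 0$ together with the displayed definitions of $H^k_I(F^*)$ and $H^{k+1}(F^*,I^*)$, while the rows and the column through $\hat{\mathbf{H}}^k$ are verified directly using axiom (1) to kill elements of $E^*\cap I^*$ (well-definedness of $\delta_1,\delta_2$ and of $[a]\mapsto da$) and axiom (2) to pass between ``exact in $F^*$'' and ``exact in $E^*$'' (surjectivity of $\delta_2$, identification of $\ker\delta_1$-type statements, and the adjustment of a spark with $r=ds$ to a representative in $E^k$). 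Two small simplifications: with the paper's definition $\hat{\mathbf{H}}^k_E$ is literally a subset of $\hat{\mathbf{H}}^k$, so injectivity of $\hat{\mathbf{H}}^k_E\hookrightarrow\hat{\mathbf{H}}^k$ needs no argument (the real content is your identification of it with $E^k$ modulo the relation in the top row), and the bottom row is tautological from the kernel/image definitions of $H^{k+1}(F^*,I^*)$ and $H^{k+1}_I(F^*)$, with no appeal to $H^*(E^*)\cong H^*(F^*)$ required.
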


\begin{defn}
Two spark complexes $(F^*,E^*,I^*)$ and $(\bar{F}^*,\bar{E}^*,\bar{I}^*)$
are quasi-isomorphic if there exists a commutative diagram of morphisms

\xymatrix{& & & & & I^* \ar @{^{(}->}[r] \ar@{^{(}->}[d]^i &
F^* \ar@{^{(}->}[d] & E^*  \ar @{_{(}->}[l] \ar @{=}[d] \\
& & & & & \bar{I}^* \ar @{^{(}->}[r]  & \bar{F}^*
 & \bar{E}^* \ar @{_{(}->}[l]\\}
inducing an isomorphism $$i^*: H^*(I^*) \stackrel{\cong}\longrightarrow H^*(\bar{I}^*).$$
\end{defn}

\begin{prop}\cite{HL1}
A quasi-isomorphism of spark complexes $(F^*,E^*,I^*)$ and $(\bar{F}^*,\bar{E}^*,\bar{I}^*)$
induces an isomorphism
$$\hat{\mathbf{H}}^k(F^*,E^*,I^*)\cong \hat{\mathbf{H}}^k(\bar{F}^*,\bar{E}^*,\bar{I}^*)$$
of the associated groups of spark classes. Moreover, it induces an isomorphism of the
$3\times 3$ grids associated to the two complexes.
\end{prop}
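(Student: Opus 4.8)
The plan is to prove the two assertions separately but by the same mechanism: a quasi-isomorphism of spark complexes is, by Definition, the data of chain maps that are the identity on $E^*$, inclusions $I^* \hookrightarrow \bar I^*$ and $F^* \hookrightarrow \bar F^*$, and an isomorphism on the cohomology of the $I$-terms; I want to transport spark classes along this data and check the transported map is well-defined, injective and surjective. First I would define the natural map $\Phi : \hat{\mathbf H}^k(F^*,E^*,I^*) \to \hat{\mathbf H}^k(\bar F^*,\bar E^*,\bar I^*)$ by sending $[a]$ to $[a]$, viewing $a \in F^k \subset \bar F^k$; since the spark equation $da = e - r$ has $e \in E^{k+1}$ unchanged and $r \in I^{k+1} \subset \bar I^{k+1}$, the element $a$ is again a spark in the larger complex, and if $a - a' = db + s$ with $b \in F^{k-1}$, $s \in I^k$ then the same identity holds in $\bar F^{k-1}$, $\bar I^k$, so $\Phi$ is well-defined. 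Then I would identify $\Phi$ with the middle vertical map of a morphism of the two $3\times 3$ grids of the previous Proposition, the outer columns being the evident maps on $Z_I^{k+1}(E^*)$, $H^{k+1}(I^*)$, $H^k(F^*/I^*)$, and so on.

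Next I would run the five lemma. The point of the hypothesis $i^* : H^*(I^*) \xrightarrow{\cong} H^*(\bar I^*)$ is that it forces isomorphisms on all the $I$-built pieces of the grid: the terms $H^{k+1}(I^*)$, $H^k_I(F^*)$, $H^{k+1}(F^*,I^*)$, $H^k(F^*/I^*)$ and $Z_I^{k+1}(E^*)$ are each expressible through $H^*(I^*)$, $H^*(E^*)$ and the maps between them (using $H^*(E^*)\cong H^*(F^*)\cong H^*(\bar F^*)$ from the spark-complex axioms together with the long exact sequence of the pair $(F^*,I^*)$ and its analogue for $\bar{\phantom{F}}$), so the morphism of grids is an isomorphism on every term except possibly the middle one, $\hat{\mathbf H}^k$. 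I would check each of these identifications in turn — the only slightly delicate ones being $Z_I^{k+1}(E^*)$, where I must see that $e \in E^{k+1}$ is $F^*$-homologous to some $r \in I^{k+1}$ iff it is $\bar F^*$-homologous to some $\bar r \in \bar I^{k+1}$ (this uses surjectivity of $i^*$ in degree $k+1$ and the $F\cong\bar F$ cohomology iso in degree $k+1$), and $H^{k+1}(F^*,I^*)$ analogously. Given that, applying the five lemma to, say, the left column of the grid (or directly to the middle row, whose outer terms $H^k(F^*/I^*)$ and $Z_I^{k+1}(E^*)$ are now known to map isomorphically) yields that $\Phi$ is an isomorphism, and by construction the whole grid morphism is then an isomorphism, giving the second sentence of the Proposition for free.

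The main obstacle I anticipate is bookkeeping rather than conceptual depth: writing $Z_I^{k+1}(E^*)$ and $H^{k+1}(F^*,I^*)$ purely in terms of $H^*(I^*)$, $H^*(E^*)$ and structural maps, so that the hypothesis on $i^*$ visibly implies the comparison map on those terms is an iso. Concretely I would chase: for $Z_I^{k+1}(E^*)$, an element is a cycle $e$ with $[e] \in H^{k+1}(F^*)$ lying in the image of $H^{k+1}(I^*)$; since $i^*$ is onto in degree $k+1$ and $H^{k+1}(F^*)\cong H^{k+1}(\bar F^*)$, the condition "$e$ is $F$-homologous into $I$" matches "$e$ is $\bar F$-homologous into $\bar I$", and I must also check the natural map is injective on these subspaces, which is immediate since it is the identity on cycles. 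Once these two identifications are in hand, the five lemma does everything, and the "isomorphism of $3\times 3$ grids" claim is simply the observation that all the comparison maps assemble into a morphism of the two exact grids, four of whose terms we have just shown to be isomorphisms, whence so are the remaining five.
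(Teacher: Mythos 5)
The paper offers no proof of this proposition at all---it simply cites \cite{HL1}---so there is nothing to compare step by step; judged on its own, your argument is correct and is essentially the standard one (and the one used by Harvey--Lawson): the natural map $[a]\mapsto[a]$, assembled into a morphism of the two $3\times 3$ grids, isomorphisms on the outer terms forced by the hypothesis on $i^*$ together with $H^*(E^*)\cong H^*(F^*)$, and then the short five lemma. Two small remarks. First, your identifications of $Z_I^{k+1}(E^*)$ with $Z_{\bar I}^{k+1}(\bar E^*)$ and of $H^k(F^*/I^*)$ with $H^k(\bar F^*/\bar I^*)$ rely on $H^*(F^*)\to H^*(\bar F^*)$ being an isomorphism, which you deduce from $E^*=\bar E^*$; this requires reading condition (2) of Definition 2.1 as ``the inclusion $E^*\hookrightarrow F^*$ induces an isomorphism on cohomology'' (the reading intended there and explicit in \cite{HL1}), not merely as an abstract isomorphism---worth stating, since the whole comparison hinges on it. Second, the five lemma should be applied to the middle row $0\to H^k(F^*/I^*)\to\hat{\mathbf H}^k\to Z_I^{k+1}(E^*)\to 0$ (or to the middle column), not to the left column, which does not contain $\hat{\mathbf H}^k$; your parenthetical already gives the correct choice, so this is only a slip of phrasing. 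With those caveats the well-definedness check, the grid morphism, and the five-lemma conclusion (including the isomorphism of all nine terms, giving the second sentence of the proposition) are all sound.
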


\section{Spark Characters}
We give our main examples of homological spark complexes and define the Harvey-Lawson
spark characters associated to a smooth manifold.

Let $X$ be a smooth manifold of dimension $n$. Let $\mathcal{E}^k$ denote the sheaf
of smooth differential $k$-forms on $X$, $\mathcal{D}^k$ the sheaf of currents of
degree $k$  on $X$. Let $\mathcal{R}^k$ and $\mathcal{IF}^k$ denote the sheaf of
rectifiable currents of degree $k$ and the sheaf of integrally flat currents of
degree $k$ on $X$ respectively. Note that
$$\mathcal{IF}^k(U)=\{r+ds: r\in \mathcal{R}^k(U) \text{ and } s\in \mathcal{R}^{k-1}(U)\}$$

\subsection{The de Rham-Federer Spark Complex}
\begin{defn}
The de Rham-Federer spark complex associated to a smooth manifold $X$ is obtained by taking
$$F^k=\mathcal{D}'^k(X),\quad E^k=\mathcal{E}^k(X),\quad I^k=\mathcal{IF}^k(X).$$
\end{defn}
\begin{rmk}
The condition $H^k(\mathcal{D}'^*(X))=H^k(\mathcal{E}^*(X))=H^k(X,\mathbb{R})$ is standard.
For a proof that $\mathcal{E}^k(X)\cap\mathcal{IF}^k(X)=\{0\}$ for $k>0$, see \cite{HLZ} Lemma 1.3.
\end{rmk}

\begin{defn}
A de Rham-Federer spark of degree $k$ is a current $a\in\mathcal{D}'^k(X)$ with the spark equation
$$da=e-r$$ where $e\in \mathcal{E}^{k+1}(X)$ is smooth and $r\in\mathcal{IF}^{k+1}(X)$ is integrally flat.

Two sparks $a$ and $a'$ are equivalent if there exist $b\in\mathcal{D}'^{k-1}(X)$ and $s\in \mathcal{IF}^k(X)$
with $$a-a'=db+s.$$

The equivalence class determined by a spark $a$ will be denoted by $[a]$ and the space of spark classes
will be denoted by $\hat{\mathbf{H}}^k_{spark}(X)$.
\end{defn}

Let $\mathcal{Z}_0^k(X)$ denote closed degree $k$ forms on $X$ with integral periods.
Note that $H^k(\mathcal{IF}^*(X))=H^k(X,\mathbb{Z})$. By Proposition 2.4 , we have

\begin{prop}\cite{HLZ}
There exist well-defined surjective homomorphisms
$$\delta_1: \hat{\mathbf{H}}^k(X)\rightarrow \mathcal{Z}_0^{k+1}(X) \quad \text{and} \quad
\delta_2: \hat{\mathbf{H}}^k(X) \rightarrow H^{k+1}(X,\mathbb{Z})$$ given by
$$\delta_1([a])=e \quad \text {and} \quad \delta_2([a])=[r]$$ where $da=e-r$.

Associated to the de Rham-Federer spark complex is the commutative diagram

\xymatrix{& & & 0 \ar[d] & 0 \ar[d] & 0 \ar[d] &   \\
& & 0 \ar[r] & \frac{H^k(X,\mathbb{R})}{H^k_{free}(X,\mathbb{Z})} \ar[r] \ar[d] &
\hat{\mathbf{H}}^k_{\infty}(X) \ar[r] \ar[d] & d\mathcal{E}^k(X) \ar[r] \ar[d] & 0 \\
& & 0 \ar[r] & H^k(X,\mathbb{R}/\mathbb{Z}) \ar[r] \ar[d] & \hat{\mathbf{H}}^k(X) \ar[r]^{\delta_1}
\ar[d]^{\delta_2} & \mathcal{Z}_0^{k+1}(X) \ar[r] \ar[d] & 0 \\
& & 0 \ar[r] & H^{k+1}_{tor}(X,\mathbb{Z}) \ar[r]\ar[d] &
H^{k+1}(X,\mathbb{Z}) \ar[r] \ar[d] & H_{free}^{k+1}(X,\mathbb{Z}) \ar[r] \ar[d] & 0 \\
& & & 0 & 0 & 0 &\\}
\noindent where $\hat{\mathbf{H}}^k_{\infty}(X)\cong\mathcal{E}^k(X)/\mathcal{Z}^k_0(X)$
denote the group of spark classes of degree $k$ which
can be represented by smooth forms.
\end{prop}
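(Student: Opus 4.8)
The plan is to derive this entire diagram as a direct application of the general $3\times 3$ grid from Proposition 2.4 to the de Rham--Federer spark complex $(F^*,E^*,I^*)=(\mathcal{D}'^*(X),\mathcal{E}^*(X),\mathcal{IF}^*(X))$, so the only real work is to identify each of the nine entries of the abstract grid with the concrete object claimed here. First I would verify the triple is indeed a homological spark complex: condition (1) is the content of the Remark preceding the Proposition (namely $\mathcal{E}^k(X)\cap\mathcal{IF}^k(X)=\{0\}$ for $k>0$ by \cite{HLZ} Lemma 1.3, and the vanishing in negative degrees is immediate), and condition (2) is the de Rham theorem $H^k(\mathcal{D}'^*(X))\cong H^k(\mathcal{E}^*(X))\cong H^k(X,\mathbb{R})$. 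I would also record the cohomology computation $H^k(\mathcal{IF}^*(X))\cong H^k(X,\mathbb{Z})$, which is the statement that integrally flat currents compute integral singular cohomology; this is the key external input and I would cite \cite{HLZ} for it.

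Next I would translate the abstract definitions of $Z_I^{k+1}(E^*)$, $H_I^k(F^*)$ and $H^{k+1}(F^*,I^*)$ appearing in Proposition 2.4. A closed smooth $(k+1)$-form $e$ lies in $Z_I^{k+1}(\mathcal{E}^*)$ precisely when $e-r=da$ for some integrally flat $r$ and current $a$; since $[r]\in H^{k+1}(X,\mathbb{Z})$ maps to the de Rham class $[e]$, this says exactly that $[e]$ is in the image of integral cohomology, i.e. $e$ has integral periods, giving $Z_I^{k+1}(\mathcal{E}^*)=\mathcal{Z}_0^{k+1}(X)$. Under the identifications $H^*(I^*)=H^*(X,\mathbb{Z})$ and $H^*(F^*)=H^*(X,\mathbb{R})$, the map $H^k(I^*)\to H^k(F^*)$ is the change-of-coefficients map, so $H_I^k(F^*)$ is its image $H^k_{free}(X,\mathbb{Z})$ (the free part, since torsion dies in $\mathbb{R}$) and $H^{k+1}(F^*,I^*)=\ker\{H^{k+1}(X,\mathbb{Z})\to H^{k+1}(X,\mathbb{R})\}=H^{k+1}_{tor}(X,\mathbb{Z})$. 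For the bottom-left corner $H^k(F^*/I^*)$, the long exact sequence of the pair together with $H^*(F^*)=H^*(X,\mathbb{R})$, $H^*(I^*)=H^*(X,\mathbb{Z})$ and the coefficient sequence $0\to\mathbb{Z}\to\mathbb{R}\to\mathbb{R}/\mathbb{Z}\to 0$ identifies $H^k(\mathcal{D}'^*(X)/\mathcal{IF}^*(X))\cong H^k(X,\mathbb{R}/\mathbb{Z})$; similarly $dE^k=d\mathcal{E}^k(X)$ is immediate.

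Finally I would treat the top row, which requires identifying $\hat{\mathbf{H}}^k_E$. By definition this is the subgroup of spark classes represented by a smooth form $a\in\mathcal{E}^k(X)$; for such $a$ the spark equation reads $da=e-r$ with $da$ smooth and $r$ integrally flat, forcing $r=0$ (again by $\mathcal{E}\cap\mathcal{IF}=0$ when $da\neq 0$, and by a small argument when $da=0$) and $e=da$, while the equivalence relation among smooth sparks is $a-a'=db+s$ with $b$ a current and $s$ integrally flat, which (intersecting smooth forms with $\mathcal{IF}$) reduces to $a-a'\in\mathcal{Z}_0^k(X)$. Hence $\hat{\mathbf{H}}^k_E\cong\mathcal{E}^k(X)/\mathcal{Z}_0^k(X)$, the group denoted $\hat{\mathbf{H}}^k_\infty(X)$, and its place in the three-term row $0\to H^k(X,\mathbb{R})/H^k_{free}(X,\mathbb{Z})\to\hat{\mathbf{H}}^k_\infty(X)\to d\mathcal{E}^k(X)\to 0$ is inherited verbatim from Proposition 2.4. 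The main obstacle is not any single step but marshalling the standard sheaf-theoretic facts cleanly — in particular the identification $H^k(\mathcal{IF}^*(X))\cong H^k(X,\mathbb{Z})$ and the compatibility of the abstract connecting maps $\delta_1,\delta_2$ with the geometric descriptions $\delta_1([a])=e$, $\delta_2([a])=[r]$ — after which the diagram chase is purely formal from Proposition 2.4 and the naturality of its construction.
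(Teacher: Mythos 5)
Your proposal is correct and follows essentially the same route as the paper, which simply invokes the general $3\times 3$ grid of Proposition 2.4 for the de Rham--Federer spark complex together with the identifications $H^*(\mathcal{IF}^*(X))\cong H^*(X,\mathbb{Z})$, $H^*(\mathcal{D}'^*(X))\cong H^*(\mathcal{E}^*(X))\cong H^*(X,\mathbb{R})$ (citing \cite{HLZ}); you merely spell out the entry-by-entry identifications that the paper leaves implicit. One small simplification: in identifying $\hat{\mathbf{H}}^k_E$ you do not need a separate case when $da=0$, since $r=e-da$ is always a smooth integrally flat current of degree $k+1>0$ and hence vanishes outright.
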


\subsection{The Hyperspark Complex and Smooth Hyperspark Complex}
Suppose $\mathcal{U}=\{U_i\}$ is a good cover of $X$ (with each intersection $U_I$ contractible).
We have the \v{C}ech-Current bicomplex $\bigoplus_{p,q\geq0}C^p(\mathcal{U},\mathcal{D}'^q)$.
Now we are concerned with the total complex of \v{C}ech-Current bicomplex
$\bigoplus_{p+q=*}C^p(\mathcal{U},\mathcal{D}'^q)$
with total differential $D=\delta+(-1)^pd$.

\begin{defn}
By the hyperspark complex we mean the spark complex defined as
$$(F^*,E^*,I^*)=
(\bigoplus_{p+q=*}C^p(\mathcal{U},\mathcal{D}'^q),\mathcal{E}^*(X),
\bigoplus_{p+q=*}C^p(\mathcal{U},\mathcal{IF}^q)).$$
\end{defn}

\begin{rmk}
We should verify the triple of complexes above is a spark complex.
There is a natural inclusion
$\mathcal{E}^*(X)\hookrightarrow \bigoplus_{p+q=*}C^p(\mathcal{U},\mathcal{D}'^q)$, given
by $$\mathcal{E}^*(X)\hookrightarrow \mathcal{D}'^*(X) \hookrightarrow C^0(\mathcal{U},\mathcal{D}'^*)
\hookrightarrow \bigoplus_{p+q=*}C^p(\mathcal{U},\mathcal{D}'^q).$$

For $k>0$, $\mathcal{E}^k(X) \cap \bigoplus_{p+q=k}C^p(\mathcal{U},\mathcal{IF}^q))=
\mathcal{E}^k(X) \cap C^0(\mathcal{U},\mathcal{IF}^k))=
\mathcal{E}^k(X) \cap \mathcal{IF}^k(X) =\{0\}$.

And it is easy to see $H^*(F^*)=H^*(\mathcal{D}'^*(X))=H^*(X,\mathbb{R})=H^*(E^*)$,
and also $H^*(I^*)=H^*(C^*(\mathcal{U},\mathbb{Z}))=H^*(X,\mathbb{Z})$.
\end{rmk}

\begin{defn}
A hyperspark of degree $k$ is an element $$a\in\bigoplus_{p+q=k}C^p(\mathcal{U},\mathcal{D}'^q)$$
with the property $$Da=e-r$$ where $e\in\mathcal{E}^{k+1}(X)\subset C^0(\mathcal{U},\mathcal{D}'^{k+1})$
is of bidegree $(0,k+1)$ and $r\in \bigoplus_{p+q=k+1}C^p(\mathcal{U},\mathcal{IF}^q))$.

Two hypersparks $a$ and $a'$ are said to be equivalent if there exists $b\in
\bigoplus_{p+q=k-1}C^p(\mathcal{U},\mathcal{D}'^q)$ and $s\in \bigoplus_{p+q=k}C^p(\mathcal{U},\mathcal{IF}^q))$
satisfying $$a-a'=Db+s.$$

The equivalence class determined by a hyperspark $a$ will be denoted by $[a]$, and the space of hyperspark
classes will be denoted by $\hat{\mathbf{H}}^k_{hyperspark}(X)$.
\end{defn}

\begin{prop}
$$\hat{\mathbf{H}}^k_{spark}(X)\cong \hat{\mathbf{H}}^k_{hyperspark}(X).$$
\end{prop}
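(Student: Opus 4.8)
The plan is to exhibit a quasi-isomorphism of spark complexes between the de Rham–Federer spark complex and the hyperspark complex, and then invoke Proposition 2.6 to conclude that their groups of spark classes are isomorphic. Concretely, the common ``$E^*$'' is $\mathcal{E}^*(X)$ in both cases, so the diagram required in Definition 2.5 amounts to the pair of vertical inclusions
$$\mathcal{IF}^*(X)\hookrightarrow \bigoplus_{p+q=*}C^p(\mathcal{U},\mathcal{IF}^q), \qquad \mathcal{D}'^*(X)\hookrightarrow \bigoplus_{p+q=*}C^p(\mathcal{U},\mathcal{D}'^q),$$
each given by placing a global section in \v{C}ech bidegree $0$; these are evidently compatible with the inclusions of $I^*$ into $F^*$ and with the identity on $E^*$. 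So the only substantive thing to check is that the left-hand inclusion induces an isomorphism on cohomology, i.e. $H^*(\mathcal{IF}^*(X))\xrightarrow{\;\cong\;} H^*\bigl(\bigoplus_{p+q=*}C^p(\mathcal{U},\mathcal{IF}^q)\bigr)$.

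For that cohomology computation I would run the standard \v{C}ech-to-derived-functor spectral sequence (equivalently, the two filtrations of the double complex $C^p(\mathcal{U},\mathcal{IF}^q)$). Filtering by $q$ first: the rows are the \v{C}ech complexes $C^*(\mathcal{U},\mathcal{IF}^q)$ of the sheaves $\mathcal{IF}^q$, and since $\mathcal{U}$ is a good cover and the sheaves $\mathcal{IF}^q$ are fine (they are modules over the sheaf of smooth functions, as currents are), the \v{C}ech cohomology of $\mathcal{IF}^q$ in positive degree vanishes and $H^0(\mathcal{U},\mathcal{IF}^q)=\mathcal{IF}^q(X)$. Hence the $E_1$ page collapses to the column $q\mapsto \mathcal{IF}^q(X)$, giving precisely $H^*(\mathcal{IF}^*(X))$ for the cohomology of the total complex, and this identification is exactly the map induced by the bidegree-$(*,0)$ inclusion. (As Remark 3.11 already notes, both of these also compute $H^*(X,\mathbb{Z})$ via the other filtration, using that $\mathcal{IF}^*$ is a resolution of the constant sheaf $\mathbb{Z}$ — but for the quasi-isomorphism statement I only need the collapse of the first spectral sequence, which is cleaner.)

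The one point requiring a little care — and what I expect to be the main obstacle — is justifying that the sheaves $\mathcal{IF}^q$ are acyclic for \v{C}ech cohomology on the good cover, or more precisely that their higher \v{C}ech cohomology on every finite intersection $U_I$ vanishes so the spectral sequence degenerates as claimed. Fineness (existence of partitions of unity in the category of $\mathcal{IF}^q$-sheaves) is the clean way to see this: multiplication of an integrally flat current by a smooth bump function is again integrally flat, so $\mathcal{IF}^q$ admits smooth partitions of unity and is therefore soft, hence \v{C}ech-acyclic. One should also double-check the edge-map identification: that the composite $\mathcal{IF}^*(X)\hookrightarrow \bigoplus C^p(\mathcal{U},\mathcal{IF}^q)$ followed by passage to cohomology agrees with the spectral sequence edge homomorphism — this is routine but worth a sentence. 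Once the quasi-isomorphism is established, Proposition 2.6 delivers not just $\hat{\mathbf{H}}^k_{spark}(X)\cong \hat{\mathbf{H}}^k_{hyperspark}(X)$ but also an isomorphism of the associated $3\times 3$ grids, which is what one wants for the later comparison of ring structures.
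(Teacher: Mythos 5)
Your overall route is exactly the paper's: the paper's proof of this proposition is the one-line assertion that the natural inclusion of the de Rham--Federer spark complex into the hyperspark complex is a quasi-isomorphism of spark complexes, after which Proposition 2.6 gives the isomorphism of spark class groups (and of the $3\times 3$ grids). You have correctly identified that, since $E^*=\mathcal{E}^*(X)$ is unchanged and the diagram of Definition 2.5 commutes, the only substantive point is that the inclusion $\mathcal{IF}^*(X)\hookrightarrow \bigoplus_{p+q=*}C^p(\mathcal{U},\mathcal{IF}^q)$ in \v{C}ech bidegree $(0,*)$ induces an isomorphism on cohomology, and the collapse of the first spectral sequence plus the edge-map identification is the right way to see this.

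However, the justification you give at the step you yourself flag as the main obstacle is wrong: $\mathcal{IF}^q$ is \emph{not} a module over the sheaf of smooth functions, and it does not admit partitions of unity. Rectifiable currents carry integer multiplicities, so multiplying an integrally flat current by a smooth bump function with values in $(0,1)$ generally destroys rectifiability; hence the fineness argument ``as currents are'' does not apply (it works for $\mathcal{D}'^q$ and $\mathcal{E}^q$, not for $\mathcal{R}^q$ or $\mathcal{IF}^q$). The conclusion you need -- that $\mathcal{IF}^q$ is soft, hence acyclic on the cover and its intersections, so the rows of the double complex are exact in positive \v{C}ech degree -- is nevertheless true, but it must be obtained differently: softness of the sheaves of rectifiable and integrally flat currents comes from the fact that such currents can be restricted to Borel (in particular closed) sets with the restriction remaining rectifiable, so sections over closed sets extend; this is the Federer-theoretic fact used in \cite{HLZ} and \cite{HL1}, where $\mathcal{IF}^*$ is shown to be a soft resolution of the constant sheaf $\mathbb{Z}$. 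If you replace your fineness claim by an appeal to that softness result (or reprove it via Borel restriction), the rest of your argument -- the spectral sequence collapse, the edge-map identification, and the application of Proposition 2.6 -- is correct and coincides with the paper's intended proof.
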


\begin{proof}
It is easy to see that there is a natural inclusion from the de Rham-Federer spark complex
to the hyperspark complex which is a quasi-isomorphism.
\end{proof}

We can consider the de Rham-Federer spark complex as a spark subcomplex of the hyperspark complex,
now we introduce another spark subcomplex of the hyperspark complex, which is called
the smooth hyperspark complex.

\begin{defn}
By the smooth hyperspark complex we mean the spark complex
$$(F^*,E^*,I^*)=(\bigoplus_{p+q=*}C^p(\mathcal{U},\mathcal{E}^q),\mathcal{E}^*(X),
C^*(\mathcal{U},\mathbb{Z})).$$

\end{defn}

\begin{defn}
A smooth hyperspark of degree $k$ is an element $$A\in\bigoplus_{p+q=k}C^p(\mathcal{U},\mathcal{E}^q)$$
with the property $$Da=e-r$$ where $e\in\mathcal{E}^{k+1}(X)\subset C^0(\mathcal{U},\mathcal{E}^{k+1})$
is of bidegree $(0,k+1)$ and $r\in C^{k+1}(\mathcal{U},\mathbb{Z})$.

Two smooth hypersparks $a$ and $a'$ are equivalent if there exists $b\in
\bigoplus_{p+q=k-1}C^p(\mathcal{U},\mathcal{E}^q)$ and $s\in C^k(\mathcal{U},\mathbb{Z})$
satisfying $$a-a'=Db+s.$$

The equivalence class determined by a smooth hyperspark $a$ will be denoted by $[a]$, and the space of
smooth hyperspark classes will be denoted by $\hat{\mathbf{H}}^k_{smooth}(X)$.

\end{defn}
One can easily verify that the smooth hyperspark complex is quasi-isomorphic to the hyperspark complex \cite{HL1}.
Hence, we have
\begin{prop}
$$\hat{\mathbf{H}}^k_{smooth}(X)\cong \hat{\mathbf{H}}^k_{hyperspark}(X).$$
\end{prop}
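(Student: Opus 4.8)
The plan is to realize the smooth hyperspark complex as a spark subcomplex of the hyperspark complex and then quote Proposition 2.6. Take $(F^*,E^*,I^*)$ to be the smooth hyperspark complex and $(\bar F^*,\bar E^*,\bar I^*)$ the hyperspark complex. The middle arrow of the diagram in Definition 2.5 is the termwise inclusion $\bigoplus_{p+q=*}C^p(\mathcal{U},\mathcal{E}^q)\hookrightarrow\bigoplus_{p+q=*}C^p(\mathcal{U},\mathcal{D}'^q)$ coming from the inclusion of smooth forms into currents; the arrow on the $E$-column is the identity of $\mathcal{E}^*(X)$; and the arrow on the $I$-column is $C^*(\mathcal{U},\mathbb{Z})\hookrightarrow\bigoplus_{p+q=*}C^p(\mathcal{U},\mathcal{IF}^q)$, sending an integer \v{C}ech cochain to the \v{C}ech cochain of integrally flat currents of bidegree $(p,0)$ given by the corresponding locally constant integer-valued $0$-currents. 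First I would check that all three maps commute with the two inclusions in each triple and with the total differential $D=\delta+(-1)^{p}d$; the only point to observe is that on locally constant integer $0$-currents the current differential $d$ vanishes, so $D$ restricts there to the \v{C}ech coboundary $\delta$, which matches the differential on $C^*(\mathcal{U},\mathbb{Z})$.

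By Proposition 2.6 it then remains to verify that the $I$-column map induces an isomorphism $i^*\colon H^*(C^*(\mathcal{U},\mathbb{Z}))\stackrel{\cong}{\longrightarrow}H^*\bigl(\bigoplus_{p+q=*}C^p(\mathcal{U},\mathcal{IF}^q)\bigr)$. Since $\mathcal{U}$ is a good cover the left-hand side is $H^*(X,\mathbb{Z})$. For the right-hand side I would use that $\mathcal{IF}^{\bullet}$ is a resolution of the constant sheaf $\mathbb{Z}$, so that the \v{C}ech double complex $C^{\bullet}(\mathcal{U},\mathcal{IF}^{\bullet})$ with differential $D$ computes the hypercohomology $\mathbb{H}^*(X,\mathcal{IF}^{\bullet})\cong H^*(X,\mathbb{Z})$, and that the inclusion of the $q=0$ edge $C^*(\mathcal{U},\mathbb{Z})$ is exactly the comparison map realizing this identification. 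This is precisely the computation recorded in the remark following the definition of the hyperspark complex (see also \cite{HL1}, \cite{HLZ}), so in the write-up I would simply cite it rather than reproduce the spectral-sequence bookkeeping.

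With $i^*$ an isomorphism, Definition 2.5 says the smooth hyperspark complex and the hyperspark complex are quasi-isomorphic, and Proposition 2.6 then gives $\hat{\mathbf{H}}^k_{smooth}(X)\cong\hat{\mathbf{H}}^k_{hyperspark}(X)$, compatibly with the $3\times 3$ grids of Proposition 2.4. The one genuinely non-formal ingredient is the statement that $i^*$ is an isomorphism --- equivalently, that integrally flat currents resolve $\mathbb{Z}$ and that the associated \v{C}ech double complex over a good cover computes $H^*(X,\mathbb{Z})$; this is the expected main obstacle, but it is already available from the cited results, so it reduces to a citation. The remaining verifications --- that the three maps are morphisms of complexes and that the Definition 2.5 diagram commutes --- are routine.
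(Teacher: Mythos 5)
Your proposal is correct and follows essentially the same route as the paper: the paper's proof simply asserts that the smooth hyperspark complex is quasi-isomorphic to the hyperspark complex (citing \cite{HL1}) and invokes the fact that quasi-isomorphisms of spark complexes induce isomorphisms of spark classes (Proposition 2.6). You have merely made explicit the inclusion maps, the commutativity checks, and the identification $H^*(I^*)\cong H^*(X,\mathbb{Z})$ on both sides that the paper leaves to the reader.
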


\begin{cor}
$$\hat{\mathbf{H}}^k_{spark}(X)\cong \hat{\mathbf{H}}^k_{smooth}(X).$$
\end{cor}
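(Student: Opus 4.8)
The plan is to chain together the two isomorphisms already established. By Proposition 3.4 we have a natural isomorphism $\hat{\mathbf{H}}^k_{spark}(X)\cong\hat{\mathbf{H}}^k_{hyperspark}(X)$, and by Proposition 3.7 we have $\hat{\mathbf{H}}^k_{smooth}(X)\cong\hat{\mathbf{H}}^k_{hyperspark}(X)$. Composing the first with the inverse of the second yields the desired isomorphism $\hat{\mathbf{H}}^k_{spark}(X)\cong\hat{\mathbf{H}}^k_{smooth}(X)$. So strictly speaking the corollary is immediate; the only content worth spelling out is \emph{why} the two cited propositions hold, in case Proposition 3.7 is invoked by reference to \cite{HL1} rather than proved in the text.

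First I would recall the mechanism behind Propositions 3.4 and 3.7: both rest on Proposition 2.7, which says a quasi-isomorphism of spark complexes induces an isomorphism of the associated groups of spark classes. For Proposition 3.4 the relevant diagram is the inclusion of the de Rham--Federer spark complex $(\mathcal{D}'^*(X),\mathcal{E}^*(X),\mathcal{IF}^*(X))$ into the hyperspark complex, realized via $\mathcal{D}'^*(X)\hookrightarrow C^0(\mathcal{U},\mathcal{D}'^*)\hookrightarrow\bigoplus_{p+q=*}C^p(\mathcal{U},\mathcal{D}'^q)$; the $I^*$ parts are $\mathcal{IF}^*(X)\hookrightarrow\bigoplus_{p+q=*}C^p(\mathcal{U},\mathcal{IF}^q)$, and the induced map on cohomology is $H^*(X,\mathbb{Z})\to H^*(X,\mathbb{Z})$, which is the identity by the $\check{\mathrm C}$ech-to-derived-functor spectral sequence for the good cover $\mathcal{U}$ (the sheaf $\mathcal{IF}^*$ resolves $\mathbb{Z}$). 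For Proposition 3.7 the relevant diagram is the inclusion of the smooth hyperspark complex $(\bigoplus C^p(\mathcal{U},\mathcal{E}^q),\mathcal{E}^*(X),C^*(\mathcal{U},\mathbb{Z}))$ into the hyperspark complex, with $E^*$ unchanged, $F^*$ the inclusion $\mathcal{E}^q\hookrightarrow\mathcal{D}'^q$ in each $\check{\mathrm C}$ech degree, and $I^*$ the inclusion $C^*(\mathcal{U},\mathbb{Z})\hookrightarrow\bigoplus C^p(\mathcal{U},\mathcal{IF}^q)$; again the induced map on $H^*(I^*)$ is an isomorphism $H^*(X,\mathbb{Z})\xrightarrow{\cong}H^*(X,\mathbb{Z})$ since both complexes compute sheaf cohomology of $\mathbb{Z}$ with respect to $\mathcal{U}$.

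Then I would simply compose. Let $\Phi\colon\hat{\mathbf{H}}^k_{spark}(X)\xrightarrow{\cong}\hat{\mathbf{H}}^k_{hyperspark}(X)$ be the isomorphism of Proposition 3.4 and $\Psi\colon\hat{\mathbf{H}}^k_{smooth}(X)\xrightarrow{\cong}\hat{\mathbf{H}}^k_{hyperspark}(X)$ that of Proposition 3.7. Then $\Psi^{-1}\circ\Phi$ is the asserted isomorphism. In fact one can be slightly more concrete: since the de Rham--Federer spark complex and the smooth hyperspark complex are \emph{both} spark subcomplexes of the hyperspark complex, and the two inclusions are each quasi-isomorphisms, one obtains compatible identifications of the full $3\times 3$ grids of Proposition 2.4 as well, by the second sentence of Proposition 2.7. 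I would note this for later use, since it records that the isomorphism respects the characteristic maps $\delta_1,\delta_2$.

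The honest answer to ``which step is the main obstacle'' is: there isn't one — the corollary is a formal consequence of two isomorphisms already in hand, and the only thing requiring care is bookkeeping, namely checking that the two quasi-isomorphisms in question genuinely satisfy Definition 2.6 (in particular that each induces the identity, not merely \emph{some} isomorphism, on $H^*(X,\mathbb{Z})$, so that the composite $\Psi^{-1}\circ\Phi$ is canonical rather than depending on a choice). This is where the good-cover hypothesis on $\mathcal{U}$ does its work, via the standard fact that $\check{\mathrm C}$ech cohomology with respect to a good cover agrees with sheaf cohomology; I would cite this rather than reprove it.
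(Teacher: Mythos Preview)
Your approach is exactly the paper's: the corollary is stated there with no proof, as an immediate consequence of composing the two isomorphisms through $\hat{\mathbf{H}}^k_{hyperspark}(X)$. Your extra discussion of why the inclusions are quasi-isomorphisms is correct but more than the paper supplies; note however that your internal references are off by a few --- the two isomorphisms are Propositions~3.8 and~3.11 (not 3.4 and 3.7), and the quasi-isomorphism principle is Proposition~2.6 (not 2.7).
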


We can consider the hyperspark complex as a bridge which connects the de Rham-Federer spark complex
and the smooth hyperspark complex.

\subsection{Harvey-Lawson Spark Characters}
We defined three homological spark complexes associated to a smooth manifold $X$, and showed
the natural isomorphisms between the groups of spark classes associated to them. We refer
reader to \cite{HL1} for more very interesting spark complexes whose groups of spark classes
are all isomorphic.
We denote the groups of spark classes by $\hat{\mathbf{H}}^*(X)$ collectively, and call them
the \textbf{Harvey-Lawson spark characters associated to} $X$.

An important fact is that $\hat{\mathbf{H}}^*(X)$ has
a ring structure which is functorial with respect to smooth maps between manifolds.
This ring structure on $\hat{\mathbf{H}}^*(X)$ is defined in \cite{HLZ}
via the de Rham-Federer spark complex. The main technical difficulty is that the
wedge product of two currents may not be well-defined.
However, we can always choose good representatives in the following sense:
\begin{prop}\cite[Proposition 3.1]{HLZ}
Given classes $\alpha\in\hat{\mathbf{H}}^k_{spark}(X)$ and $\beta\in\hat{\mathbf{H}}^l_{spark}(X)$ there exist
representatives $a\in\alpha$ and $b\in\beta$ with $da=e-r$ and $db=f-s$ so that
$a\wedge s$, $r\wedge b$ and $r\wedge s$ are well-defined flat currents on $X$ and $r\wedge s$
is rectifiable.
\end{prop}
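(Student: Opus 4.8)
The plan is to construct explicit good representatives by a mollification argument, exploiting the fact that $r$ and $s$ are integrally flat and hence, up to exact pieces, supported on rectifiable sets of the correct dimension. First I would invoke the structure of integrally flat currents: write $r = r_0 + db$ and $s = s_0 + dc$ where $r_0 \in \mathcal{R}^{k+1}(X)$, $s_0 \in \mathcal{R}^{l+1}(X)$ are rectifiable, and $b, c$ are rectifiable currents of one lower degree. Absorbing the exact parts into the sparks $a$ and $b$ (replacing $a$ by $a + (-1)^{?}b$ and similarly for the other spark, using the spark equivalence $a - a' = db + s$), I may assume from the outset that $r$ and $s$ are themselves rectifiable. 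The support of a rectifiable current of degree $k+1$ has finite $\mathcal{H}_{n-k-1}$ measure, so the supports of $r$ and $s$ are "small" in the appropriate dimension; this is what will let wedge products involving them make sense.

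Next I would handle the smoothing of $a$ and $b$. The key step is: within its spark class, any spark $a$ with $da = e - r$, $r$ rectifiable, has a representative whose wedge products against a fixed rectifiable current are well-defined. Concretely, using a partition of unity and local mollification, I would replace $a$ by $a' = a - dB$ for a suitable current $B$ so that $a'$ is smooth away from $\operatorname{supp}(r)$ and has controlled behavior near it — more precisely, so that $a'$ restricts to an $L^1_{\mathrm{loc}}$ form on a neighborhood of any rectifiable set transverse to $\operatorname{supp}(r)$. The precise tool here is Federer's theory of slicing together with the flat-norm approximation of currents; one shows that the set of representatives $a'$ for which $a' \wedge s$ is a well-defined flat current is dense and, by a Baire-category / general-position argument applied to the (countably many simplices of the) rectifiable sets carrying $r_0$ and $s_0$, one can choose $a$ and $b$ simultaneously so that all three products $a \wedge s$, $r \wedge b$, $r \wedge s$ are simultaneously defined. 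For $r \wedge s$ to be rectifiable one uses that the intersection of two rectifiable sets in general position has the expected (lower) dimension, so the product current — defined via slicing — is again rectifiable; this is where one invokes the intersection theory of rectifiable currents (cf. the federated slicing formulas, or King's / Hardt's results on intersections of rectifiable currents).

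The main obstacle will be the simultaneous general-position statement: it is easy to perturb $a$ so that $a \wedge s$ is defined, and separately to perturb $b$ so that $r \wedge b$ is defined, but one must do both while keeping $r \wedge s$ rectifiable and not disturbing the already-achieved transversality. I would organize this as a single transversality argument: the "bad set" of parameters (in a finite-dimensional family of mollification/translation data, or in a Baire space of such data) for which any one of the three conditions fails is a countable union of closed sets of empty interior, hence the good set is nonempty. The only genuinely delicate point is verifying that $a \wedge s$ and $r \wedge b$ are \emph{flat} currents (not merely currents): this follows because $a$ can be taken flat (indeed $a$ has flat boundary $e - r$), $s$ is flat, and the wedge product, when defined by slicing, inherits flatness from the flat-norm convergence of the approximants. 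I would write this out citing Federer's \texttt{GMT} §4.3 on slicing and the flat-norm estimates, and \cite{HLZ} for the rectifiability of $r \wedge s$, which is essentially the content needed to make Harvey--Lawson's de Rham--Federer product well-defined in the first place.
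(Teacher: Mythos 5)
The first thing to note is that the paper does not prove this statement at all: it is quoted verbatim, with citation, from \cite[Proposition 3.1]{HLZ}, so there is no in-paper argument for your proposal to match and it must be judged against that source. Your outline has the right general philosophy (reduce to rectifiable $r,s$, then a general-position argument based on Federer's slicing theory), and the opening reduction is correct: since $r=r_0+ds_1$ with $r_0,s_1$ rectifiable, replacing $a$ by $a+s_1$ stays in the spark class and replaces $r$ by $r_0$, so one may indeed assume $r$ and $s$ rectifiable. But beyond that point the central claims are asserted rather than proved, and one of them is circular: you propose to cite \cite{HLZ} for the rectifiability of $r\wedge s$, which is part of the very statement under proof.

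The concrete gaps are these. First, mollification is not an admissible move inside the spark complex: smoothing $a$ smooths $r$ as well, and the smoothed $r$ is no longer integrally flat, so the result is not a spark; replacing $a$ by $a-dB$ does stay in the class, but the existence of a $B$ making $a-dB$ smooth away from $\operatorname{supp}(r)$ is exactly what would need proof, and it is not a reasonable expectation since a rectifiable current can have support equal to all of $X$. Relatedly, it is the carrying set of a rectifiable current, not its support, that has finite Hausdorff measure, so a transversality scheme keyed to smallness of supports does not get off the ground. Second, the Baire-category assertion that the ``bad set'' of perturbation parameters is a countable union of closed nowhere dense sets is precisely the analytic heart of the proposition and is nowhere substantiated; the argument of \cite{HLZ} instead moves one representative by (almost every member of) a measured family of diffeomorphisms --- an operation that preserves rectifiability and integral flatness --- and combines Federer's almost-everywhere slicing and intersection theorems with the homotopy formula to show the moved current is still a spark representing the same class and that all three products are simultaneously well-defined, with $r\wedge s$ rectifiable. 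Your sketch gestures at ``translation data'' but never verifies membership in the class after perturbation, nor that flatness of $a\wedge s$ and $r\wedge b$ survives the limiting procedure (note that ``$a$ has flat boundary, hence can be taken flat'' is not an argument). As it stands this is a plausible plan rather than a proof; to make it rigorous you would essentially have to reproduce the diffeomorphism-averaging argument of \cite{HLZ}.
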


\begin{thm}\cite[Theorem 3.5]{HLZ}
Setting $$\alpha*\beta\equiv[a\wedge f+(-1)^{k+1}r\wedge b]=
[a\wedge s+(-1)^{k+1}e\wedge b]\in \hat{\mathbf{H}}^{k+l+1}_{spark}(X) $$
gives $\hat{\mathbf{H}}^*_{spark}(X)$ the structure of a graded commutative ring such that
$\delta_1: \hat{\mathbf{H}}^*_{spark}(X)\rightarrow \mathcal{Z}_0^{*+1}(X)$ and
$\delta_2: \hat{\mathbf{H}}^*_{spark}(X) \rightarrow H^{*+1}(X,\mathbb{Z})$ are ring homomorphisms.
\end{thm}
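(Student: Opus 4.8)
The plan is to reduce everything to one principle: once sufficiently good representatives are chosen, the ring axioms hold essentially on the nose, so that the entire difficulty is analytic---to guarantee that the currents written down actually make sense. For this I will use the preceding proposition in its natural strengthening to finitely many classes at once: given $\alpha_1,\dots,\alpha_m$ one may choose representatives $a_i\in\alpha_i$, $da_i=e_i-r_i$, so that every iterated wedge product among the $a_i$, $e_i$ and $r_i$ that appears below is a well-defined flat current, and every such product built only from the $r_i$ is rectifiable. Proving this extension---by the general-position and slicing arguments of geometric measure theory that already underlie the case $m\le 2$---is the step I expect to be the main obstacle. Granting it, the Leibniz rule $d(T\wedge S)=dT\wedge S+(-1)^{\deg T}T\wedge dS$ is legitimate for the currents in sight, and the remainder is bookkeeping.

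\emph{Well-definedness.} Applying Leibniz to $a\wedge b$ and using $da=e-r$, $db=f-s$ gives $\bigl(a\wedge f+(-1)^{k+1}r\wedge b\bigr)-\bigl(a\wedge s+(-1)^{k+1}e\wedge b\bigr)=(-1)^{k}\,d(a\wedge b)$, so the two displayed formulas define the same spark class. Independence of the representing pair is then checked through elementary moves: replacing $a$ by $a+db_0$ changes $a\wedge f+(-1)^{k+1}r\wedge b$ by $(db_0)\wedge f=d(b_0\wedge f)$, which is exact (recall $df=0$); replacing $a$ by $a+s_0$ with $s_0\in\mathcal{IF}^k(X)$ changes the product representative, now read from the \emph{other} formula, only by $s_0\wedge s$, which lies in $\mathcal{IF}^{k+l+1}(X)$ by the extended proposition. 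The moves on $b$ are symmetric, and any two admissible representing pairs for a fixed $(\alpha,\beta)$ are linked by such moves after passing to a common admissible pair.

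\emph{Ring axioms.} Bilinearity over $\mathbb{Z}$ is immediate from the formula; the unit is the class of the degree $-1$ spark $0$, with $e=r=1$, for which the formula gives $1*\alpha=\alpha$. For graded commutativity, compare the \emph{second} formula $\alpha*\beta=[a\wedge s+(-1)^{k+1}e\wedge b]$ with the \emph{first} formula $\beta*\alpha=[b\wedge e+(-1)^{l+1}s\wedge a]$; re-ordering the wedge factors contributes exactly the Koszul signs and yields $\beta*\alpha=(-1)^{(k+1)(l+1)}\alpha*\beta$ with no correction term. For associativity, write $dc=g-t$ and use $\delta_1(\alpha*\beta)=e\wedge f$ together with the integral current $r\wedge s$ representing $\delta_2(\alpha*\beta)$ (see the last step); a direct expansion using the first formula throughout shows that \emph{both} $(\alpha*\beta)*\gamma$ and $\alpha*(\beta*\gamma)$ are represented by $a\wedge f\wedge g+(-1)^{k+1}r\wedge b\wedge g+(-1)^{k+l}r\wedge s\wedge c$, while other formula choices alter this only by an exact current. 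Thus associativity, too, reduces to producing representatives $a,b,c$ for which all these triple products are legitimate---again the extended proposition.

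\emph{The maps $\delta_1$ and $\delta_2$ are ring homomorphisms.} Apply $d$ to $P:=a\wedge f+(-1)^{k+1}r\wedge b$; using $df=dr=ds=0$ one gets $dP=e\wedge f-r\wedge s$. Since $e\wedge f$ is a smooth form with integral periods (it represents $[e]\smile[f]$) and $r\wedge s$ is rectifiable, this is the spark equation for $P$, whence $\delta_1(\alpha*\beta)=e\wedge f=\delta_1(\alpha)\wedge\delta_1(\beta)$ and $\delta_2(\alpha*\beta)=[r\wedge s]$. It remains to identify $[r\wedge s]$ in $H^{k+l+2}(\mathcal{IF}^*(X))=H^{k+l+2}(X,\mathbb{Z})$ with $[r]\smile[s]=\delta_2(\alpha)\smile\delta_2(\beta)$, the classical fact that the intersection product of integral rectifiable cycles represents the cup product of their cohomology classes, which I would quote rather than reprove. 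Assembling these four points proves the theorem.
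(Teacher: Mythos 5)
Your proposal is correct and follows essentially the same route as the paper (which itself reproduces the argument of \cite[Theorem 3.5]{HLZ}): verify that both displayed currents are sparks with $dP=e\wedge f-r\wedge s$ and differ by $(-1)^k d(a\wedge b)$, check independence of representatives via $a-a'=dc+t$, obtain graded commutativity by comparing the two formulas up to Koszul signs, and read off $\delta_1(\alpha*\beta)=e\wedge f$, $\delta_2(\alpha*\beta)=[r\wedge s]$ from the spark equation of the product. Your explicit associativity representative and your insistence on a multi-class version of the ``good representatives'' proposition are welcome elaborations of points the paper dispatches with ``it is easy,'' but they do not change the approach.
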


\begin{proof}
It is easy to verify $$d(a\wedge f+(-1)^{k+1}r\wedge b)=d(a\wedge s+(-1)^{k+1}e\wedge b)=e\wedge f-r\wedge s,$$
$$(a\wedge f+(-1)^{k+1}r\wedge b)-(a\wedge s+(-1)^{k+1}e\wedge b)=(-1)^kd(a\wedge b).$$
So $a\wedge f+(-1)^{k+1}r\wedge b$ and $a\wedge s+(-1)^{k+1}e\wedge b$ are sparks and represent the same
spark class.

To show that the product is independent of choices of representatives, assume the spark
$a'\in \mathcal{D}'^k(X)$ represent the same spark class with $a$ and $da'=e'-r'$.
Then $\exists c\in \mathcal{D}'^{k-1}(X)$ and $t\in \mathcal{IF}^k(X)$ with
$a-a'=dc+t$. We have
$$(a\wedge f+(-1)^{k+1}r\wedge b)-(a'\wedge f+(-1)^{k+1}r'\wedge b)=d(c\wedge f+(-1)^k(t\wedge b))+t\wedge s.$$
By the same calculation we can show the product is also independent of choices of representatives
of the second factor.

We can calculate $$\beta*\alpha=[b\wedge e+(-1)^{l+1}s\wedge a]=
(-1)^{(k+1)(l+1)}[a\wedge s+(-1)^{k+1}e\wedge b]=(-1)^{(k+1)(l+1)}\alpha*\beta,$$
i.e. the product is graded commutative.

Also, it is easy to show the product is associative.
\end{proof}

\begin{thm}\cite{HLZ}
Any smooth map $f:X\rightarrow Y$ between two smooth manifolds induces a
graded ring homomorphism $$f^*:\hat{\mathbf{H}}^*(Y)\rightarrow \hat{\mathbf{H}}^*(X)$$
compatible with $\delta_1$ and $\delta_2$.
Moreover, if $g:Y\rightarrow Z$ is smooth, then $(g\circ f)^*=f^*\circ g^*$.
\end{thm}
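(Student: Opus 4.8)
The plan is to avoid pulling back currents --- which is not defined in general --- by carrying out the construction in the smooth hyperspark model $(\bigoplus_{p+q=*}C^p(\mathcal{U},\mathcal{E}^q),\ \mathcal{E}^*(X),\ C^*(\mathcal{U},\mathbb{Z}))$, where the objects in sight are smooth forms and integer \v{C}ech cochains, both of which pull back without difficulty. Fix good covers $\mathcal{V}=\{V_\alpha\}$ of $Y$ and $\mathcal{U}=\{U_i\}$ of $X$; after replacing $\mathcal{U}$ by a common refinement of $\mathcal{U}$ and $f^{-1}\mathcal{V}=\{f^{-1}V_\alpha\}$ --- which changes none of the spark groups, by Corollary~3.7 --- we may choose an index map $\tau$ with $U_i\subseteq f^{-1}V_{\tau(i)}$ for all $i$. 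Define, for a \v{C}ech cochain $\phi$ with values in $\mathcal{E}^q(Y)$,
\[
(f^\#_\tau\phi)_{i_0\cdots i_p}\;=\;f^*\!\bigl(\phi_{\tau(i_0)\cdots\tau(i_p)}\bigr)\big|_{U_{i_0\cdots i_p}}.
\]
One checks directly that $f^\#_\tau$ commutes with the \v{C}ech differential $\delta$ and, since $f^*$ commutes with $d$ on forms, with $d$, hence with $D=\delta+(-1)^pd$. It carries the distinguished subspace $\mathcal{E}^*(Y)\subset C^0(\mathcal{V},\mathcal{E}^*)$ into $\mathcal{E}^*(X)\subset C^0(\mathcal{U},\mathcal{E}^*)$, and it carries $C^*(\mathcal{V},\mathbb{Z})$ into $C^*(\mathcal{U},\mathbb{Z})$ because $f^*$ of a locally constant $\mathbb{Z}$-valued function is again locally constant and $\mathbb{Z}$-valued. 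Thus $f^\#_\tau$ is a morphism of spark complexes: it sends the spark equation $DA=e-r$ to $D(f^\#_\tau A)=f^*e-f^\#_\tau r$ and respects the equivalence relation, so the formalism of Section~2 yields a degree-preserving homomorphism $\hat{\mathbf{H}}^*_{smooth}(Y)\to\hat{\mathbf{H}}^*_{smooth}(X)$ which is compatible with $\delta_1$ and $\delta_2$ (with the usual pullbacks on $\mathcal{Z}_0^{*+1}$ and on $H^{*+1}(-,\mathbb{Z})$ on the targets).

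Next I would show this map is independent of the auxiliary choices. If $\tau,\tau'$ are two index maps compatible with $f$ (possibly for two different good refinements of $\mathcal{U}$), the cochain maps $f^\#_\tau$ and $f^\#_{\tau'}$ are chain homotopic via the standard \v{C}ech prism operator, and one verifies the homotopy preserves both the $\mathcal{E}^*$ part and the $C^*(\mathcal{U},\mathbb{Z})$ part; hence the two induced maps on $\hat{\mathbf{H}}^*_{smooth}$ agree. Combined with the canonical, cover-independent isomorphisms $\hat{\mathbf{H}}^*_{smooth}\cong\hat{\mathbf{H}}^*_{spark}=\hat{\mathbf{H}}^*$ of Corollary~3.7, this produces a well-defined $f^*:\hat{\mathbf{H}}^*(Y)\to\hat{\mathbf{H}}^*(X)$. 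Functoriality $(g\circ f)^*=f^*\circ g^*$ is then immediate at the cochain level: for good covers $\mathcal{U},\mathcal{V},\mathcal{W}$ of $X,Y,Z$ and index maps $U_i\subseteq f^{-1}V_{\tau(i)}$, $V_\alpha\subseteq g^{-1}W_{\sigma(\alpha)}$, one has $U_i\subseteq(g\circ f)^{-1}W_{\sigma(\tau(i))}$ and $(g\circ f)^\#_{\sigma\circ\tau}=f^\#_\tau\circ g^\#_\sigma$, since $(g\circ f)^*=f^*\circ g^*$ on forms; passing to spark classes by the previous paragraph gives the identity.

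For the ring structure I would use the description of the product on $\hat{\mathbf{H}}^*_{smooth}(X)$ by a cup product on the double complex $\bigoplus_{p,q}C^p(\mathcal{U},\mathcal{E}^q)$ developed in Section~4, under which $\hat{\mathbf{H}}^*_{smooth}\cong\hat{\mathbf{H}}^*_{spark}$ as rings. The cochain map $f^\#_\tau$ is multiplicative for this cup product --- the \v{C}ech--Alexander--Whitney formula is natural in the space and $f^*$ is a ring homomorphism on forms --- so $f^*$ respects products; as it visibly preserves total degree, it is a graded ring homomorphism. (Alternatively, one can argue with the de Rham--Federer product of Theorem~3.5 directly, choosing good representatives whose singular pieces are transverse to $f$, but the transversality bookkeeping for currents is precisely what the smooth hyperspark model sidesteps.)

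The step I expect to be the main obstacle is the second one: verifying that the cochain-level pullback descends to a map on spark classes that is genuinely independent of the good cover of $X$ and of the refinement map $\tau$, and that this is compatible with the cover-change isomorphisms of Corollary~3.7 --- i.e. the \v{C}ech-theoretic bookkeeping that makes $f^*$ canonical rather than cover-dependent. Once that is in place, the chain-map verification, strict functoriality at the cochain level, and multiplicativity (given the cup-product description of Section~4) are all formal.
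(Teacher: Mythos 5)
The paper does not actually prove this statement: it is quoted from [HLZ], where the pullback is constructed in the current-theoretic (de Rham--Federer / Cheeger--Simons) picture, and the work goes into choosing representatives whose singular parts can be pulled back. Your route through the smooth hyperspark model is genuinely different and, in outline, correct: smooth forms and integer \v{C}ech cochains pull back without any transversality issues, your $f^\#_\tau$ is visibly a morphism of spark complexes (it intertwines $\delta$, $d$, hence $D$, preserves the $E$- and $I$-parts, and sends $DA=e-r$ to $D(f^\#_\tau A)=f^*e-f^\#_\tau r$), it is strictly multiplicative for the cup product of Section~4 since $(f^\#_\tau(a\cup b))_{i_0\cdots i_{r+s}}=(-1)^{js}f^*a_{\tau(i_0)\cdots\tau(i_r)}\wedge f^*b_{\tau(i_r)\cdots\tau(i_{r+s})}$, and strict functoriality at the cochain level is clear. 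Your homotopy argument for independence of the index map does work in the form you sketch: with $H$ the prism operator acting only in the \v{C}ech direction one has $DH+HD=f^\#_{\tau'}-f^\#_\tau$ on the total complex (the $d$-terms cancel), and applying this to a spark $a$ gives $f^\#_{\tau'}a-f^\#_\tau a=D(Ha)-H(r)$ with $H(e)=0$ (Čech degree $0$) and $H(r)\in C^{k}(\mathcal{U},\mathbb{Z})$, which is exactly spark equivalence. What your approach buys is that multiplicativity becomes an identity of representatives, using the paper's result that the smooth-hyperspark product agrees with the cover-free de Rham--Federer product (Theorem~4.6); since that theorem is proved without functoriality, there is no circularity. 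What [HLZ]'s approach buys is that it never leaves the intrinsic, cover-free model, so no cover bookkeeping is needed at all.

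Two points deserve more care than your write-up gives them. First, the ``common refinement of $\mathcal{U}$ and $f^{-1}\mathcal{V}$'' need not be a good cover; you should instead pass to some good cover refining it (this always exists), since the smooth hyperspark complex in the paper is defined only for good covers. Second, the step you yourself flag really is the substantive one: the group $\hat{\mathbf{H}}^*_{smooth}$ is defined relative to a cover, so you must check that $f^\#$ is compatible with refinement maps on $X$ and with changes of the cover of $Y$, and that these comparisons match the canonical isomorphisms $\hat{\mathbf{H}}^*_{smooth}\cong\hat{\mathbf{H}}^*_{spark}$ used to land in the cover-free groups; this is the same kind of prism-operator/naturality argument you already use for two index maps, but it must be carried out (also for the ring identification, which is handled automatically once you invoke that each cover's smooth product equals the one cover-free de Rham--Federer product). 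With those verifications supplied, your argument is a complete and self-contained proof, alternative to the one cited from [HLZ].
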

So we can consider $\hat{\mathbf{H}}^*(\bullet)$ as a graded ring functor on the category
of smooth manifolds and smooth maps.

\subsection{Cheeger-Simons Differential Characters}
Cheeger and Simons introduced differential characters in their remarkable paper \cite{CS}.

Let $X$ be a smooth manifold. And let $C_k(X)\supset Z_k(X) \supset B_k(X)$ denote the groups of
smooth singular $k$-chains, cycles and boundaries.

\begin{defn}
The group of differential characters of degree $k$ is defined by
$$\hat{H}^k(X)=\{h\in \hom(Z_k(X),\mathbb{R}/\mathbb{Z}):dh\equiv \omega \mod \mathbb{Z}, \text{ for some }
\omega\in  \mathcal{E}^{k+1}(X)\}.$$
\end{defn}
\begin{rmk}
For any $\sigma\in C_{k+1}(X)$, $(dh)(\sigma)=h\circ\partial(\sigma)$. In the definition above,
$dh\equiv \omega \mod \mathbb{Z}$ means
$h\circ\partial(\sigma)\equiv\int_{\Delta_{k+1}}\sigma^*(\omega) \mod \mathbb{Z}$,
$\forall \sigma\in C_{k+1}(X)$.
\end{rmk}

Cheeger and Simons also defined the ring structure on $\hat{H}^k(X)$ and showed the
functoriality of $\hat{H}^k(X)$. Harvey, Lawson and Zweck \cite{HL1}\cite{HLZ}
established the equivalency of differential characters and
spark characters.

\begin{thm}\cite{HL1}\cite{HLZ}
$$\hat{\mathbf{H}}^*(X)\cong\hat{H}^*(X).$$
\end{thm}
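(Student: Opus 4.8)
The plan is to construct an explicit isomorphism $\hat{\mathbf{H}}^k(X) \cong \hat{H}^k(X)$ by realizing both sides through a convenient spark complex, and then checking it is a bijection compatible with the structural data recorded in the $3\times 3$ grid of Proposition 2.4. I would work on the de Rham--Federer side, since a de Rham--Federer spark $a \in \mathcal{D}'^k(X)$ with $da = e - r$, $r \in \mathcal{IF}^{k+1}(X)$, is the object most directly comparable to a differential character. The map goes as follows: given such a spark $a$, since $r$ is integrally flat and closed it represents an integral class; choosing a primitive and using the fact that a degree-$k$ current can be integrated against smooth $k$-cycles, define a homomorphism $Z_k(X) \to \mathbb{R}/\mathbb{Z}$ by $z \mapsto a(z) \bmod \mathbb{Z}$, where $a(z)$ means the pairing of the current $a$ with the cycle $z$ (made sense of via a suitable regularization, or equivalently via the de Rham--Federer description of $a$ as $R - dS$ with $R$ a rectifiable current whose boundary carries the integral data). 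The spark equation forces $d h \equiv e \bmod \mathbb{Z}$, so $h \in \hat{H}^k(X)$.

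The key steps, in order, would be: (1) verify the assignment $a \mapsto h$ is well defined on currents satisfying the spark equation, i.e. that the pairing with cycles is finite and additive; (2) verify it descends to spark classes, by showing that modifying $a$ by $db + s$ with $s \in \mathcal{IF}^k(X)$ changes $h$ by something in $\hom(Z_k, \mathbb{Z})$ — this is where the integrality of flat currents enters, via the constancy theorem for integrally flat currents of the appropriate dimension; (3) identify the curvature $\delta_1([a]) = e$ with the form $\omega$ in the definition of $\hat{H}^k$, so that the two ``$\delta_1$'' maps agree; (4) identify $\delta_2([a]) = [r] \in H^{k+1}(X,\mathbb{Z})$ with the characteristic class of the differential character $h$; (5) invoke the four-lemma (or the two outer rows of the $3\times 3$ grid of Proposition 2.4, whose analogue on the Cheeger--Simons side is the classical exact sequence of \cite{CS}) to conclude that a map of short exact sequences which is an isomorphism on the kernel term $H^k(X,\mathbb{R}/\mathbb{Z})$ and on the quotient term $\mathcal{Z}_0^{k+1}(X)$ (resp. $H^{k+1}(X,\mathbb{Z})$) is an isomorphism in the middle. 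Since $H^k(X,\mathbb{R}/\mathbb{Z})$ injects into both $\hat{\mathbf{H}}^k$ and $\hat{H}^k$ as the ``flat'' classes and the diagram commutes by construction, this five-lemma argument closes the proof.

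The main obstacle, as the authors themselves flag in the remark preceding Proposition 3.10, is analytic rather than homological: one must make sense of the pairing between a general current of degree $k$ and a smooth singular $k$-cycle, and control it well enough to prove well-definedness on equivalence classes. The cleanest route is to not pair $a$ itself with cycles, but to use the de Rham--Federer representation and the fact that every spark class has a representative of the form $R - dS$ with $R$ rectifiable; then $a(z) = R(z) - S(\partial z)$, and for a cycle $z$ this is just $R(z)$, which is a well-defined real number because $R$ is rectifiable of complementary dimension. Equivalently, one can route through the smooth hyperspark complex via Corollary 3.9, where a representative lives in $\bigoplus C^p(\mathcal{U},\mathcal{E}^q)$ and the pairing with cycles can be assembled from genuine integrals of smooth forms over simplices — this is essentially the Čech--de Rham computation of the Cheeger--Simons class and may in fact give the most transparent comparison. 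I would present the current-theoretic version as the main argument and remark that the hyperspark version recovers the classical formula of \cite{CS}. Since this theorem is quoted from \cite{HL1} and \cite{HLZ}, I would keep the proof brief, citing \cite{HLZ} for the detailed verification of steps (1)--(3) and giving the five-lemma assembly (4)--(5) in full.
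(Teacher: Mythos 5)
The paper does not actually prove this statement: it is quoted from \cite{HL1} and \cite{HLZ}, so your sketch has to be measured against those proofs. Your overall architecture (build a natural map, identify it with $\delta_1$ and $\delta_2$, and finish with a five-lemma argument on the two short exact sequences) is the standard one and is fine. The genuine gap is in your step (1), the construction of the map itself. First, the claimed representative $a=R-dS$ with $R$ rectifiable does not exist for general classes: if $a=R-dS$ then $da=dR$ is integrally flat, so $\mathcal{E}^{k+1}(X)\cap\mathcal{IF}^{k+1}(X)=\{0\}$ forces the curvature $e$ to vanish; worse, such an $a$ differs from $0$ by $d(-S)$ plus the integrally flat current $R$, hence represents the \emph{zero} spark class, so this device sees only the trivial class. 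Second, even granting a good representative, the evaluation $a(z)$ (or $R(z)$) of a degree-$k$ current on a smooth singular $k$-cycle is not defined: a degree-$k$ current has dimension $n-k$, so pairing it with a $k$-cycle is an intersection-type pairing, and ``rectifiable of complementary dimension'' does not make $R(z)$ well defined (the supports may overlap, and even when they do not, one must prove independence of the representative modulo $\mathbb{Z}$). This is precisely the analytic difficulty that \cite{HLZ} handle with general-position and transversality arguments of the kind lying behind Proposition 3.13 here; it cannot be dismissed by an unspecified regularization, because the whole content is controlling the resulting mod-$\mathbb{Z}$ ambiguity.

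The route you mention only in passing is the one that closes this gap cleanly, and it is essentially how \cite{HL1} argue: introduce the Cheeger--Simons spark complex $(C^*(X,\mathbb{R}),\mathcal{E}^*(X),C^*(X,\mathbb{Z}))$ of smooth singular cochains, with forms included via integration over smooth chains; its spark classes are, by inspection, exactly the differential characters $\hat{H}^*(X)$ (reduce the real cochain mod $\mathbb{Z}$ on cycles), and the quasi-isomorphism machinery of Proposition 2.6, applied through hyperspark-type intermediate complexes as in Sections 3.2--3.3, identifies these spark classes with $\hat{\mathbf{H}}^*_{spark}(X)$ without ever evaluating a current on a cycle. Alternatively, your smooth-hyperspark variant (subdivide a cycle subordinate to $\mathcal{U}$, sum integrals of the components $a^{p,q}$ over faces together with contributions of the integer cochain) can be made to work, but the substance of the proof is then exactly the verification of independence of subdivision and representative, which your sketch defers. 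As written, the current-theoretic ``main argument'' would fail at its first step, so the proposal has a genuine gap even though its homological superstructure is correct.
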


\section{Ring Structure via the Smooth Hyperspark Complex}
We introduced the Harvey-Lawson spark characters and
established the ring structure.
In this section, we give a new description of the ring structure via the smooth hyperspark complex.

Consider the smooth hyperspark complex
$$(F^*,E^*,I^*)=(\bigoplus_{p+q=*}C^p(\mathcal{U},\mathcal{E}^q),\mathcal{E}^*(X),
C^*(\mathcal{U},\mathbb{Z})).$$
Recall there is a cup product on the cochain complex $C^*(\mathcal{U},\mathbb{Z})$
which induces the ring structure on $H^*(X,\mathbb{Z})$.

\begin{prop}
For $a\in C^r(\mathcal{U},\mathbb{Z})$ and $b\in C^s(\mathcal{U},\mathbb{Z})$,
we define cup product
$$(a\cup b)_{i_0,...,i_{r+s}}\equiv a_{i_0,...,i_r}\cdot b_{i_r,...,i_{r+s}}.$$
This product induces an associative, graded commutative product on
$\check{H}^*(\mathcal{U},\mathbb{Z})\cong H^*(X,\mathbb{Z})$.
\end{prop}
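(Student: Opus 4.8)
The plan is to verify the three asserted properties of the cup product directly from the simplicial formula. First I would check that $a\cup b$ is actually a cochain in $C^{r+s}(\mathcal{U},\mathbb{Z})$: since the cover $\mathcal{U}$ is good, each $U_{i_0\cdots i_{r+s}}$ is contained in both $U_{i_0\cdots i_r}$ and $U_{i_r\cdots i_{r+s}}$, so the product of integers $a_{i_0\cdots i_r}\cdot b_{i_r\cdots i_{r+s}}$ makes sense as a locally constant $\mathbb{Z}$-valued function there. Next, the key algebraic identity to establish is the Leibniz rule
\begin{equation*}
\delta(a\cup b)=(\delta a)\cup b+(-1)^r a\cup(\delta b),
\end{equation*}
which is the standard Alexander--Whitney-type computation: writing out $(\delta(a\cup b))_{i_0,\dots,i_{r+s+1}}=\sum_{j=0}^{r+s+1}(-1)^j (a\cup b)_{i_0,\dots,\hat{i_j},\dots,i_{r+s+1}}$, one splits the sum at $j=r+1$; the terms with $j\le r$ reassemble into $((\delta a)\cup b)_{i_0,\dots,i_{r+s+1}}$, the terms with $j\ge r+1$ reassemble into $(-1)^r(a\cup(\delta b))_{i_0,\dots,i_{r+s+1}}$, and the two boundary terms at $j=r$ and $j=r+1$ cancel because $\widehat{i_{r+1}}$ from the first block and $\widehat{i_r}$ from the second block produce the same index string with opposite signs. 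This shows $\cup$ descends to cohomology. Associativity follows immediately since on the cochain level $((a\cup b)\cup c)_{i_0,\dots,i_{r+s+t}}=a_{i_0,\dots,i_r}\cdot b_{i_r,\dots,i_{r+s}}\cdot c_{i_{r+s},\dots,i_{r+s+t}}=(a\cup(b\cup c))_{i_0,\dots,i_{r+s+t}}$, using associativity of multiplication in $\mathbb{Z}$.

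Graded commutativity is the substantive point, since at the cochain level $a\cup b\ne(-1)^{rs}b\cup a$ in general. Here I would invoke the classical fact that the cup product on \v{C}ech cochains coincides, under the canonical identification $\check{H}^*(\mathcal{U},\mathbb{Z})\cong H^*(X,\mathbb{Z})$, with the ordinary cup product on singular (or sheaf) cohomology, which \emph{is} graded commutative. Concretely one constructs the standard cochain homotopy (the \v{C}ech analogue of the Steenrod $\cup_1$-product): an explicit $H\colon C^r\otimes C^s\to C^{r+s-1}$ with $\delta H(a,b)\pm H(\delta a,b)\pm H(a,\delta b)=a\cup b-(-1)^{rs}b\cup a$ for cocycles $a,b$, built by summing over the $(r,s)$-shuffle-type reorderings of the index set $\{i_0,\dots,i_{r+s}\}$. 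On cohomology classes the correction terms vanish, giving $[a\cup b]=(-1)^{rs}[b\cup a]$.

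The main obstacle is graded commutativity: unlike the Leibniz rule and associativity, it genuinely fails on the nose and requires either the explicit homotopy above or an appeal to the comparison with singular cohomology. I would most likely take the comparison route, citing the standard isomorphism of the \v{C}ech cup product with the singular cup product (e.g. via the double complex $C^p(\mathcal{U},\mathcal{S}^q)$ with $\mathcal{S}^*$ the singular-cochain sheaves, where both products are restrictions of the product on the total complex), since a full construction of the shuffle homotopy would be a lengthy digression of no independent interest here. Everything else — well-definedness of the formula, the bidegree bookkeeping, and the Leibniz identity — is a routine index-chasing verification that I would present compactly.
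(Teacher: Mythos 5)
Your proposal is correct and follows essentially the same route as the paper: verify the Leibniz rule $\delta(a\cup b)=\delta a\cup b+(-1)^{r}a\cup\delta b$ and associativity by direct index computation, and handle graded commutativity---which fails on the nose at the cochain level---by appealing to the standard comparison with the singular cup product (the paper simply cites Brylinski, Proposition~1.3.7, and Greenberg--Harper for exactly this point). Your sketch just fills in the routine cancellation details that the paper declares ``easy to verify.''
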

\begin{proof}
It is easy to verify that $\delta (a\cup b)=\delta a\cup b+(-1)^r a\cup\delta b$
(the Leibniz rule), so
the product descends to cohomology. The associativity is trivial. However,
a direct proof of graded commutativity is quite complicated,
see \cite[Proposition 1.3.7]{Br} and \cite{GH}.
\end{proof}

Now we want to define a cup product on the cochain complex
$(\bigoplus_{p+q=*}C^p(\mathcal{U},\mathcal{E}^q),D=\delta+(-1)^pd)$
which is compatible with products on $\mathcal{E}^*(X)$ and $C^*(\mathcal{U},\mathbb{Z})$
and descends to its cohomology.
A first try is to define
$$(a\cup b)_{i_0,...,i_{r+s}}\equiv a_{i_0,...,i_r}\wedge b_{i_r,...,i_{r+s}}
\text{ for } a\in C^r(\mathcal{U},\mathcal{E}^p) \text{ and }
 b\in C^s(\mathcal{U},\mathcal{E}^q).$$
But it turns out that this cup product does not satisfy the Leibniz rule. We
modify the product and define
$$(a\cup b)_{i_0,...,i_{r+s}}\equiv (-1)^{js}a_{i_0,...,i_r}\wedge b_{i_r,...,i_{r+s}}
\text{ for } a\in C^r(\mathcal{U},\mathcal{E}^j) \text{ and }
 b\in C^s(\mathcal{U},\mathcal{E}^k).$$

\begin{prop}
We define a cup product on the cochain complex
$\bigoplus_{p+q=*}C^p(\mathcal{U},\mathcal{E}^q)$  as
$$(a\cup b)_{i_0,...,i_{r+s}}\equiv (-1)^{js}a_{i_0,...,i_r}\wedge b_{i_r,...,i_{r+s}}
\in C^{r+s}(\mathcal{U},\mathcal{E}^{j+k}),$$
for $a\in C^r(\mathcal{U},\mathcal{E}^j)$ and $b\in C^s(\mathcal{U},\mathcal{E}^k)$.
This product is associative and satisfies the Leibniz rule, hence it induces a product
on its cohomology.
\end{prop}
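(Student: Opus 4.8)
The plan is to check the two asserted properties of $\cup$ directly from the defining formula and then observe that passage to cohomology is formal. \emph{Associativity} is routine: for $a\in C^r(\mathcal{U},\mathcal{E}^j)$, $b\in C^s(\mathcal{U},\mathcal{E}^k)$, $c\in C^t(\mathcal{U},\mathcal{E}^l)$, unwinding the formula gives
$$((a\cup b)\cup c)_{i_0\dots i_{r+s+t}}=(-1)^{(j+k)t+js}\,a_{i_0\dots i_r}\wedge b_{i_r\dots i_{r+s}}\wedge c_{i_{r+s}\dots i_{r+s+t}}$$
and
$$(a\cup(b\cup c))_{i_0\dots i_{r+s+t}}=(-1)^{j(s+t)+kt}\,a_{i_0\dots i_r}\wedge b_{i_r\dots i_{r+s}}\wedge c_{i_{r+s}\dots i_{r+s+t}},$$
and the two sign exponents both reduce to $js+jt+kt$ modulo $2$; associativity of $\wedge$ on forms does the rest.

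The content is in the \emph{Leibniz rule}, which I would state as
$$D(a\cup b)=(Da)\cup b+(-1)^{r+j}\,a\cup(Db)\qquad\text{for }a\in C^r(\mathcal{U},\mathcal{E}^j),$$
with $D$ the total differential $\delta+(-1)^{p}d$. I would verify it by splitting $D$ into its \v{C}ech part $\delta$ and its form part $(-1)^p d$ and treating the two separately. For the form part one uses only the ordinary Leibniz rule $d(\eta\wedge\omega)=d\eta\wedge\omega+(-1)^{\deg\eta}\eta\wedge d\omega$ applied componentwise; after carrying along the sign $(-1)^{r+s}$ that $D$ attaches to the $(r+s)$-cochain $a\cup b$ and remembering that $da$ and $db$ have form-degrees $j+1$ and $k+1$ respectively (so their cup signs become $(-1)^{(j+1)s}$ and $(-1)^{js}$), one finds that the contributions of $((-1)^r da)\cup b$ and of $(-1)^{r+j}a\cup((-1)^s db)$ reproduce $(-1)^{r+s}d(a\cup b)$ term by term. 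This part is a short sign check.

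The \v{C}ech part is the crux. Expanding $\delta(a\cup b)$ on $i_0,\dots,i_{r+s+1}$ as the alternating sum of the $r+s+2$ faces, I would sort the terms by the position of the deleted index: those deleting an $i_\mu$ with $\mu\le r$, those with $\mu\ge r+2$, and the middle face $\mu=r+1$. Factoring $(-1)^{js}$ out of the first group identifies it with all but the top face of $(\delta a)\cup b$; reindexing the remaining terms ($\mu\ge r+1$) by $m=\mu-r$ and using $(-1)^{r+j}(-1)^{j(s+1)}=(-1)^{r+js}$ identifies them with all but the bottom face of $(-1)^{r+j}a\cup(\delta b)$. The two leftover terms are the top face of $(\delta a)\cup b$, equal to $(-1)^{js+r+1}a_{i_0\dots i_r}\wedge b_{i_{r+1}\dots i_{r+s+1}}$, and the bottom face of $(-1)^{r+j}a\cup(\delta b)$, equal to $(-1)^{r+js}a_{i_0\dots i_r}\wedge b_{i_{r+1}\dots i_{r+s+1}}$; these have opposite signs and cancel. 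Combining with the form-part identity yields the full Leibniz rule.

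I expect the sign bookkeeping in the \v{C}ech step to be the only real difficulty, since one is simultaneously tracking two \v{C}ech degrees, two form degrees, the correction factor $(-1)^{js}$ built into $\cup$, and the sign $(-1)^p$ in $D$, while the cup sign of a term shifts whenever $d$ raises a form-degree. Once the Leibniz rule holds, the last assertion is automatic: $D$-cocycles multiply to $D$-cocycles, and for a cocycle $b$ one has $(Dc)\cup b=D(c\cup b)$, so $\cup$ descends to a well-defined product on $H^*$. Compatibility with the wedge product on $\mathcal{E}^*(X)$ (the case $r=s=0$, where the correction sign is $1$ and $\cup$ is $\wedge$) and with the \v{C}ech cup product on $C^*(\mathcal{U},\mathbb{Z})$ (the case $j=k=0$) is visible directly from the formula.
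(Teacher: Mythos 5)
Your proof is correct and follows essentially the same route as the paper: check associativity by comparing the sign exponents $js+jt+kt$, and verify the Leibniz rule $D(a\cup b)=Da\cup b+(-1)^{r+j}a\cup Db$ by splitting the total differential into its $\delta$ and $d$ parts and tracking the $(-1)^{js}$ correction. The only difference is one of detail: where the paper simply asserts the identities $d(a\wedge b)=da\wedge b+(-1)^j a\wedge db$ and $\delta(a\wedge b)=\delta a\wedge b+(-1)^r a\wedge\delta b$ as ``easy to check,'' you supply the face-by-face expansion of the \v{C}ech part, including the cancellation of the two middle faces, and your sign bookkeeping there is accurate.
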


\begin{proof}
Associativity: for $a\in C^r(\mathcal{U},\mathcal{E}^j)$, $b\in C^s(\mathcal{U},\mathcal{E}^k)$
and $c\in C^t(\mathcal{U},\mathcal{E}^l)$, we have
$$((a\cup b)\cup c)_{i_0,...,i_{r+s+t}}=(-1)^{(j+k)t}(a\cup b)_{i_0,...,i_{r+s}}
\wedge c_{i_{r+s},...,i_{r+s+t}}$$
$$=(-1)^{jt+kt+js}a_{i_0,...,i_r}
\wedge b_{i_r,...,i_{r+s}}\wedge c_{i_{r+s},...,i_{r+s+t}},$$
and
$$(a\cup (b\cup c))_{i_0,...,i_{r+s+t}}=(-1)^{j(s+t}a_{i_0,...,i_r}
\wedge (b\cup c)_{i_{r},...,i_{r+s+t}}$$
$$=(-1)^{js+jt+kt}a_{i_0,...,i_r}
\wedge b_{i_r,...,i_{r+s}}\wedge c_{i_{r+s},...,i_{r+s+t}}.$$
Hence, the associativity follows.

The Leibniz rule: We want to check the Leibniz rule $D(a\cup b)= Da\cup b+(-1)^{r+j}a\cup Db$
for $a\in C^r(\mathcal{U},\mathcal{E}^j)$ and
$b\in C^s(\mathcal{U},\mathcal{E}^k)$. We fix the notation
$$(a\wedge b)_{i_0,...,i_{r+s}}\equiv a_{i_0,...,i_r}\wedge b_{i_r,...,i_{r+s}},$$
i.e. $a\cup b=(-1)^{js}a\wedge b$.

It is easy to check that
$$d(a\wedge b)=da\wedge b+(-1)^ja\wedge db \text{ and }
\delta(a\wedge b)=\delta a\wedge b+(-1)^ra\wedge \delta b.$$

\begin{eqnarray*}
& &D(a\cup b)\\
           &=&(\delta+(-1)^{r+s}d)(a\cup b)\\
           &=&(\delta+(-1)^{r+s}d)(-1)^{js}(a\wedge b)\\
           &=&(-1)^{js}\delta(a\wedge b)+(-1)^{r+s+js}d(a\wedge b)\\
           &=&(-1)^{js}(\delta a\wedge b+(-1)^ra\wedge\delta b)+
           (-1)^{r+s+js}(da\wedge b+(-1)^{j}a\wedge db)\\
           &=&(-1)^{js}\delta a\wedge b+(-1)^{js+r}a\wedge\delta b+
           (-1)^{r+s+js}da\wedge b+(-1)^{r+s+j+js}a\wedge db
               \end{eqnarray*}

\begin{eqnarray*}
& &Da\cup b+(-1)^{r+j}a\cup Db\\
&=&(\delta+(-1)^{r}d)a\cup b+(-1)^{r+j}a\cup (\delta+(-1)^sd)b\\
           &=&\delta a\cup b+(-1)^{r}da\cup b+(-1)^{r+j}a\cup \delta b+(-1)^{r+j+s}a\cup db\\
           &=&(-1)^{js}\delta a\wedge b+(-1)^{r+(j+1)s}da\wedge b+
           (-1)^{r+j+j(s+1)}a\wedge \delta b+(-1)^{r+j+s+js}a\wedge db\\
           &=&(-1)^{js}\delta a\wedge b+(-1)^{r+s+js}da\wedge b+
           (-1)^{js+r}a\wedge\delta b+(-1)^{r+s+j+js}a\wedge db
               \end{eqnarray*}

So the Leibniz rule is verified.
\end{proof}

\begin{rmk}
It is easy to see this product is compatible with products on
$\mathcal{E}^*(X)$ and $C^*(\mathcal{U},\mathbb{Z})$.
\end{rmk}

\begin{thm}
For two smooth hyperspark classes $\alpha\in\hat{\mathbf{H}}^k_{smooth}(X)$
and $\beta\in\hat{\mathbf{H}}^l_{smooth}(X)$, choose representatives
$a\in\alpha$ and $b\in\beta$ with spark equations $Da=e-r$ and $Db=f-s$,
where $$a\in \bigoplus_{p+q=k}C^p(\mathcal{U},\mathcal{E}^q),\quad e\in \mathcal{E}^{k+1}(X)
\subset C^0(\mathcal{U},\mathcal{E}^{k+1}),\quad r\in C^{k+1}(\mathcal{U}, \mathbb{Z})
\subset C^{k+1}(\mathcal{U},\mathcal{E}^0),$$
$$b\in \bigoplus_{p+q=l}C^p(\mathcal{U},\mathcal{E}^q),\quad f\in \mathcal{E}^{l+1}(X)
\subset C^0(\mathcal{U},\mathcal{E}^{l+1}),\quad s\in C^{k+1}(\mathcal{U}, \mathbb{Z})
\subset C^{l+1}(\mathcal{U},\mathcal{E}^{0}).$$
The product $$\alpha*\beta\equiv[a\cup f+(-1)^{k+1}r\cup b]=
[a\cup s+(-1)^{k+1}e\cup b]\in \hat{\mathbf{H}}^{k+l+1}_{spark}(X) $$
is well-defined and
gives $\hat{\mathbf{H}}^*_{smooth}(X)$ the structure of a graded commutative ring such that
$\delta_1: \hat{\mathbf{H}}^*_{smooth}(X)\rightarrow \mathcal{Z}_0^{*+1}(X)$ and
$\delta_2: \hat{\mathbf{H}}^*_{smooth}(X) \rightarrow H^{*+1}(X,\mathbb{Z})$ are ring homomorphisms.
\end{thm}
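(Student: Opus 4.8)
The plan is to mirror the structure of the proof of Theorem 3.5 from \cite{HLZ}, but to replace the analytically delicate wedge-product-of-currents argument with the purely algebraic cup product on $\bigoplus_{p+q=*}C^p(\mathcal{U},\mathcal{E}^q)$ constructed in Proposition 4.3, whose Leibniz rule and associativity are already in hand. First I would verify that $a\cup f+(-1)^{k+1}r\cup b$ is a smooth hyperspark of degree $k+l+1$: applying the Leibniz rule $D(x\cup y)=Dx\cup y+(-1)^{r+j}x\cup Dy$ together with $Da=e-r$, $Df=0$ (since $f$ is closed, being the $\delta_1$-image of $b$) and $Dr=0$, one computes $D(a\cup f+(-1)^{k+1}r\cup b)=e\cup f-r\cup s$, where $e\cup f\in\mathcal{E}^{k+l+2}(X)$ lies in bidegree $(0,k+l+2)$ and $r\cup s\in C^{k+l+2}(\mathcal{U},\mathbb{Z})$ is the \v{C}ech cup product — so this is exactly a spark equation for the smooth hyperspark complex. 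The sign bookkeeping here is the first place to be careful, since the cup product carries the extra sign $(-1)^{js}$ and $D$ acts with sign $(-1)^p$ on the \v{C}ech degree.

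Next I would show the two expressions $a\cup f+(-1)^{k+1}r\cup b$ and $a\cup s+(-1)^{k+1}e\cup b$ differ by $D(\pm a\cup b)$, hence represent the same class; this is again a direct Leibniz-rule computation using $e-f=Da-Db+\dots$, i.e. $e\cup b-a\cup s$ type rearrangements, exactly parallel to the identity $(a\wedge f+(-1)^{k+1}r\wedge b)-(a\wedge s+(-1)^{k+1}e\wedge b)=(-1)^kd(a\wedge b)$ in the proof of Theorem 3.5. Independence of the choice of representatives follows the same template: if $a'$ is another representative with $a-a'=Dc+t$, $t\in C^k(\mathcal{U},\mathbb{Z})$, then $(a\cup f+(-1)^{k+1}r\cup b)-(a'\cup f+(-1)^{k+1}r'\cup b)$ should equal $D(\text{something})+ t\cup s$ with the correction term built from $c\cup f$ and $t\cup b$, so the two products are equivalent sparks; symmetry handles the second factor. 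Then graded commutativity and the fact that $\delta_1,\delta_2$ are ring maps are read off from $\delta_1(\alpha*\beta)=e\cup f$ and $\delta_2(\alpha*\beta)=[r\cup s]$, using that the \v{C}ech cup product on $C^*(\mathcal{U},\mathbb{Z})$ induces the graded-commutative ring structure on $H^*(X,\mathbb{Z})$ (Proposition 4.2) and that the wedge product induces the one on $\mathcal{Z}_0^{*+1}(X)$.

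Associativity of $*$ reduces to associativity of $\cup$ (Proposition 4.3) plus the observation that the various "mixed" representatives $a\cup f+(-1)^{k+1}r\cup b$, $a\cup s+(-1)^{k+1}e\cup b$ are interchangeable up to $D$-coboundary, so one can always bring a triple product $(\alpha*\beta)*\gamma$ and $\alpha*(\beta*\gamma)$ to a common representative of the shape $a\cup b\cup h + (\text{lower terms})$; this is the one genuinely tedious step but it is formally identical to the corresponding step in \cite{HLZ} and I would simply indicate it rather than expand it.

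Finally, and this is where I expect the main conceptual obstacle, I would argue that this product on $\hat{\mathbf{H}}^*_{smooth}(X)$ agrees with the product on $\hat{\mathbf{H}}^*_{spark}(X)$ under the isomorphism of Corollary 3.15, so that the codomain $\hat{\mathbf{H}}^{k+l+1}_{spark}(X)$ in the statement is legitimate. The point is that the quasi-isomorphism of spark complexes relating the de Rham--Federer complex, the hyperspark complex, and the smooth hyperspark complex must be shown to be multiplicative — i.e. it intertwines $\wedge$ and $\cup$ at the level of classes. The natural way to do this is to work inside the hyperspark complex $\bigoplus_{p+q=*}C^p(\mathcal{U},\mathcal{D}'^q)$, extend the cup product $\cup$ to \v{C}ech cochains of currents whenever the relevant wedge products of currents are defined (invoking Proposition 3.9 to choose representatives for which they are), and check that the de Rham--Federer product $a\wedge f+(-1)^{k+1}r\wedge b$ and the smooth product map to cohomologous hypersparks. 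The hard part will be precisely this compatibility: on one side the product is defined only after choosing geometrically good representatives so that $a\wedge s$, $r\wedge b$, $r\wedge s$ make sense, while on the other side it is defined for arbitrary smooth representatives; bridging the two requires a homotopy argument within the hyperspark complex showing the answer does not depend on which subcomplex's recipe one uses. Once that is established, functoriality and the ring-homomorphism property of $\delta_1,\delta_2$ transport automatically from Theorem 3.5 and Theorem 3.6.
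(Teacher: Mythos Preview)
Your proposal follows the paper's approach closely: the Leibniz-rule verification that $D(a\cup f+(-1)^{k+1}r\cup b)=e\wedge f-r\cup s$, the equality of the two representatives modulo a $D$-coboundary $(-1)^kD(a\cup b)$, independence of representatives via $a-a'=Dc+t$, and the comparison with the de Rham--Federer product inside the hyperspark complex are all exactly what the paper does (the last as its separate Theorem 4.6).

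There is, however, one genuine gap in your stated reasoning. You claim that graded commutativity can be ``read off'' from $\delta_1(\alpha*\beta)=e\wedge f$ and $\delta_2(\alpha*\beta)=[r\cup s]$ together with the graded commutativity of the target rings $\mathcal{Z}_0^{*+1}(X)$ and $H^{*+1}(X,\mathbb{Z})$. This does not suffice: $\delta_1$ and $\delta_2$ are not jointly injective (their common kernel is $H^k(X,\mathbb{R})/H^k_{free}(X,\mathbb{Z})$ by the $3\times3$ grid), so knowing that $\alpha*\beta$ and $(-1)^{(k+1)(l+1)}\beta*\alpha$ have the same images under both maps does not force them to coincide. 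The paper explicitly flags this point: ``It is not easy to give a direct proof of graded commutativity. However, we can see the graded commutativity as a corollary of next theorem.'' That next theorem is precisely the hyperspark comparison you sketch in your final paragraph --- so you already have the correct argument in hand, but its role is more essential than you indicate. It is not there merely to legitimize writing the codomain as $\hat{\mathbf{H}}^{k+l+1}_{spark}(X)$; it is the actual \emph{source} of graded commutativity, inherited from the de Rham--Federer product of Theorem 3.14. You should reorganize accordingly: prove well-definedness, associativity, and the ring-homomorphism property of $\delta_1,\delta_2$ directly, then deduce graded commutativity only after establishing the comparison.
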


\begin{proof}
Since the cup product satisfies the Leibniz rule:
$$D(a\cup b)= Da\cup b+(-1)^{\deg a}a\cup Db,$$ we have
\begin{eqnarray*}
& &D(a\cup f+(-1)^{k+1}r\cup b)\\
&=&Da\cup f+(-1)^ka\cup Df+(-1)^{k+1}Dr\cup b+(-1)^{k+1+k+1}r\cup Db\\
           &=&(e-r)\cup f+r\cup (f-s)\\
           &=&e\cup f-r\cup f+r\cup a-r\cup s\\
           &=&e\wedge f-r\cup s.
               \end{eqnarray*}

Similarly, we can check
$$D(a\cup s+(-1)^{k+1}e\cup b)=e\wedge f-r\cup s,$$
and
$$(a\cup f+(-1)^{k+1}r\cup b)-(a\cup s+(-1)^{k+1}e\cup b)=(-1)^kd(a\cup b).$$
Therefore, $a\cup f+(-1)^{k+1}r\cup b$ and
$a\cup s+(-1)^{k+1}e\cup b$ are sparks and represent the same spark class.

Assume the spark $a'$ represent
the same spark class with $a$ and $da'=e'-r'$.
Then $\exists c\in \bigoplus_{p+q=k+1}C^p(\mathcal{U},\mathcal{E}^q)$
and $t\in C^k(\mathcal{U},\mathbb{Z})$ with $a-a'=Dc+t$. We have
$$(a\cup f+(-1)^{k+1}r\cup b)-(a'\cup f+(-1)^{k+1}r'\cup b)=D(c\cup f+(-1)^k(t\cup b))+t\cup s.$$
By the same calculation we can show the product is also independent of choices of representatives
of the second factor.
It is easy to check the associativity. It is not easy to give a direct proof of
graded commutativity. However, we can see the graded commutativity as a corollary of
next theorem.
\end{proof}

\begin{rmk}
Since $f\in C^0(\mathcal{U},\mathcal{E}^{l+1})$ and $r\in C^{k+1}(\mathcal{U},\mathcal{E}^0)$,
we have $$\alpha*\beta\equiv[a\cup f+(-1)^{k+1}r\cup b]=[a\wedge f+(-1)^{k+1}r\wedge b].$$

\end{rmk}

\begin{thm}
The products in Theorem 3.14 and Theorem 4.4 give the same ring structure for
$\hat{\mathbf{H}}^*_{spark}(X)\cong\hat{\mathbf{H}}^*_{smooth}(X)$.
\end{thm}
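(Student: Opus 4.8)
The plan is to exhibit an explicit spark-complex morphism realizing the quasi-isomorphism of Proposition 3.7 (equivalently Corollary 3.8) and check that it intertwines the two product formulas on representatives. Recall that the de Rham–Federer complex sits inside the hyperspark complex via $\mathcal{D}'^{*}(X)\hookrightarrow C^0(\mathcal{U},\mathcal{D}'^{*})$, and the smooth hyperspark complex sits inside the hyperspark complex via $C^p(\mathcal{U},\mathcal{E}^q)\hookrightarrow C^p(\mathcal{U},\mathcal{D}'^q)$ together with $C^*(\mathcal{U},\mathbb{Z})\hookrightarrow C^*(\mathcal{U},\mathcal{IF}^*)$. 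So the hyperspark complex is the common bridge, and it suffices to verify that, at the level of the hyperspark complex, both product formulas agree up to spark equivalence — i.e. to define a cup product on $\bigoplus_{p+q=*}C^p(\mathcal{U},\mathcal{D}'^q)$ that restricts to $\wedge$ on $\mathcal{D}'^*(X)$ (where it is defined on good representatives) and to the formula of Proposition 4.2 on $C^p(\mathcal{U},\mathcal{E}^q)$.

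First I would record that, given $\alpha\in\hat{\mathbf{H}}^k_{spark}(X)$ and $\beta\in\hat{\mathbf{H}}^l_{spark}(X)$, one may by Proposition 3.13 choose de Rham–Federer representatives $a,b$ with $da=e-r$, $db=f-s$ such that all the relevant wedge products $a\wedge s$, $r\wedge b$, $r\wedge s$ are well-defined flat currents. Pushing these into the hyperspark complex, $a\in C^0(\mathcal{U},\mathcal{D}'^k)$ etc., the mixed products $a\cup f$, $r\cup b$, $e\cup b$, $a\cup s$ that appear in the Theorem 4.4 formula all have one smooth factor or one factor in $C^0$, so the sign-twisted cup product $(-1)^{js}a_{i_0\dots i_r}\wedge b_{i_r\dots i_{r+s}}$ extends verbatim to these cases (the wedge $a_{i_0\dots i_r}\wedge b_{i_r\dots i_{r+s}}$ makes sense because one factor is smooth, and by Remark 4.5 the twist sign is trivial when $b=f,s$ lives in bidegree contributions where $j$ or $s$ is zero). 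Then the element $a\cup f+(-1)^{k+1}r\cup b\in C^0(\mathcal{U},\mathcal{D}'^{k+l+1})$ is literally the current $a\wedge f+(-1)^{k+1}r\wedge b$ sitting in degree $0$ of the Čech direction — that is, it is the image under the quasi-isomorphism of the de Rham–Federer spark $a\wedge f+(-1)^{k+1}r\wedge b$ from Theorem 3.14. Since Proposition 3.7 says the quasi-isomorphism induces an isomorphism of spark-class groups, the two classes coincide in $\hat{\mathbf{H}}^{k+l+1}_{hyperspark}(X)$, hence in $\hat{\mathbf{H}}^{k+l+1}_{smooth}(X)$.

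The one subtlety, and the step I expect to be the main obstacle, is that the representatives $a,b$ chosen via Proposition 3.13 are de Rham–Federer (current) sparks, whereas the Theorem 4.4 product is a priori defined using \emph{smooth} hyperspark representatives of $\alpha,\beta$; to compare, I must check that computing $\alpha*\beta$ by the Theorem 4.4 formula gives the same spark class whether I use genuine smooth hyperspark representatives or the de Rham–Federer representatives mapped into the hyperspark complex. This is exactly the well-definedness statement already proven inside Theorem 4.4 — the product is independent of the choice of representative within a spark class and is stable under the spark equivalences $a-a'=Dc+t$ — but one must run that argument in the ambient hyperspark complex rather than in the smooth subcomplex, which is legitimate because the spark equivalence relation is compatible with the inclusions of spark subcomplexes (Proposition 2.6). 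So I would: (i) invoke Proposition 3.13 to pick good representatives; (ii) observe the mixed products $a\cup f$, $r\cup b$, etc. lie in $C^0$ of the Čech direction and equal the corresponding de Rham–Federer wedge products as currents (Remark 4.5); (iii) conclude that under the isomorphism $\hat{\mathbf{H}}^*_{spark}(X)\cong\hat{\mathbf{H}}^*_{hyperspark}(X)\cong\hat{\mathbf{H}}^*_{smooth}(X)$ the Theorem 3.14 product maps to the Theorem 4.4 product; (iv) note that graded commutativity of the Theorem 4.4 product now follows from that of Theorem 3.14, completing the remark left open at the end of the proof of Theorem 4.4.

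An alternative, slightly more self-contained route would be to avoid the current-theoretic side entirely: use only that the smooth hyperspark complex computes the same spark classes as the hyperspark complex, and directly verify that the naive hyperspark cup product restricted to $C^0(\mathcal{U},\mathcal{D}'^*)$ recovers $\wedge$ on de Rham–Federer sparks while its restriction to $C^*(\mathcal{U},\mathcal{E}^*)$ is the Proposition 4.2 product — then both Theorem 3.14 and Theorem 4.4 are manifestly the ``same'' product computed in two quasi-isomorphic subcomplexes of one ambient complex, and functoriality of the $3\times 3$ grid under quasi-isomorphism (Proposition 2.6) forces the ring structures to agree. I would present the first route as the main argument, since it also yields the promised geometric product formula, and mention the second as a conceptual cross-check.
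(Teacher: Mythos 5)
Your overall strategy coincides with the paper's: embed both the de Rham--Federer and smooth hyperspark complexes into the hyperspark complex, extend the sign-twisted cup product to $\bigoplus_{p+q=*}C^p(\mathcal{U},\mathcal{D}'^q)$ wherever the wedges exist, and compare the two product representatives there. The gap sits exactly at the step you flag as the main obstacle. To show that the Theorem 4.4 formula computed with a smooth hyperspark representative $a$ and with a good de Rham--Federer representative $a'$ (similarly $b$, $b'$) yields the same hyperspark class, one writes $a-a'=Dc+t$, $b-b'=Dc'+t'$ with $c,c'$ current-valued \v{C}ech cochains and $t,t'$ integrally flat, and obtains
$$(a\cup f+(-1)^{k+1}r\cup b)-(a'\cup f+(-1)^{k+1}r'\cup b')=D\bigl(r\cup c'+c\cup f+(-1)^{k}t\cup b'\bigr)+(-1)^{k+1}r\cup t'+t\cup s'.$$
This identity is only meaningful if the current products $t\cup b'$ and $t\cup s'$ exist, and that is precisely the ``wedge of two currents'' difficulty which cannot be dispatched by citing Proposition 2.6: a quasi-isomorphism of spark complexes gives an isomorphism of the groups of spark classes, but it says nothing about whether a given cochain-level formula is defined, nor that the formula is independent of which subcomplex the representatives come from. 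The well-definedness argument ``already proven inside Theorem 4.4'' does not transfer either, because there it runs entirely inside the smooth subcomplex, where every wedge exists automatically. The paper closes this point by noting that $r\cup c'$, $c\cup f$ and $r\cup t'$ are automatically defined (one factor is an integral \v{C}ech cochain or a smooth global form), and that, since $t$ depends only on $a$ and $a'$, one may re-choose the good de Rham--Federer representative $b'$ with $Db'=f-s'$ so that $t\cup b'$ and $t\cup s'$ are well-defined; that is, the general-position freedom behind Proposition 3.13 must be invoked a second time, after $t$ is fixed. Without this your comparison is incomplete, although the rest of your route (good representatives, the observation that $a\cup f+(-1)^{k+1}r\cup b$ in $C^0$ of the \v{C}ech direction is literally the Theorem 3.14 spark, and deducing graded commutativity of the Theorem 4.4 product) matches the paper.

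Your proposed ``conceptual cross-check'' should be dropped or reformulated: compatibility with the $3\times 3$ grids, i.e.\ with $\delta_1$ and $\delta_2$, does not force two products to agree, since two biadditive products can have identical images under $\delta_1$ and $\delta_2$ and still differ by terms killed by both. Only the representative-level comparison above actually proves the statement.
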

\begin{proof}
We can define a cup product on the cochain complex
$\bigoplus_{p+q=*}C^p(\mathcal{U},\mathcal{D}'^q)$  as
$$(a\cup b)_{i_0,...,i_{r+s}}\equiv (-1)^{js}a_{i_0,...,i_r}\wedge b_{i_r,...,i_{r+s}},$$
for $a\in C^r(\mathcal{U},\mathcal{D}'^j)$ and $b\in C^s(\mathcal{U},\mathcal{D}'^k)$
whenever all $a_{i_0,...,i_r}\wedge b_{i_r,...,i_{r+s}}$ make sense.

For two spark classes $$\alpha\in\hat{\mathbf{H}}^k_{hyper}(X)\cong
\hat{\mathbf{H}}^k_{spark}(X)\cong\hat{\mathbf{H}}^k_{smooth}(X)$$
and $$\beta\in\hat{\mathbf{H}}^l_{hyper}(X)\cong
\hat{\mathbf{H}}^l_{spark}(X)\cong\hat{\mathbf{H}}^l_{smooth}(X).$$
We can choose representatives of $\alpha$ and $\beta$ by two ways.
1) Choose a smooth hyperspark
$a\in \bigoplus_{p+q=k}C^p(\mathcal{U},\mathcal{E}^q)\subset
\bigoplus_{p+q=k}C^p(\mathcal{U},\mathcal{D}'^q)$ representing $\alpha$;
2) Choose a de Rham-Federer spark
$a'\in \mathcal{D}'^k(X)\subset
\bigoplus_{p+q=l}C^p(\mathcal{U},\mathcal{D}'^q)$ representing $\alpha$.
We choose $b$ and $b'$ correspondingly. Moreover we can choose $a'$ and $b'$ to be "good"
representatives in the sense of Proposition 3.13.

Assume the spark equations for $a$, $b$, $a'$ and $b'$ are
$Da=e-r$, $Db=f-s$, $Da'=e-r'$ and $Db'=f-s'$ where
$e\in \mathcal{E}^{k+1}(X)$, $r\in C^{k+1}(\mathcal{U},\mathbb{Z})$,
$f\in \mathcal{E}^{l+1}(X)$, $s\in C^{l+1}(\mathcal{U},\mathbb{Z})$,
$r'\in \mathcal{IF}^{k+1}(X)$ and  $s'\in \mathcal{IF}^{l+1}(X)$.
Note that all cup products
$a\cup b$, $a\cup f$, $r\cup b$, $a'\cup b'$, $a'\cup f$, $r'\cup b'$, ect. are
well-defined in $\bigoplus_{p+q=*}C^p(\mathcal{U},\mathcal{D}'^q)$.
Then we can define product via hyperspark complex by choosing all representatives in
either the smooth hyperspark complex or the de Rham-Federer complex.

Moreover, the product does not depend on the choices of representatives.
In fact, there exist $c\in \bigoplus_{p+q=k-1}C^p(\mathcal{U},\mathcal{D}'^q)$,
$t\in \bigoplus_{p+q=k}C^p(\mathcal{U},\mathcal{IF}^q)$,
$c'\in \bigoplus_{p+q=l-1}C^p(\mathcal{U},\mathcal{D}'^q)$ and
$t'\in \bigoplus_{p+q=l}C^p(\mathcal{U},\mathcal{IF}^q)$ such that
$a-a'=Dc+t$ and $b-b'=Dc'+t'$, since $a$ and $a'$, $b$ and $b'$ represent
the same spark classes. Then
\begin{eqnarray*}
& &(a\cup f+(-1)^{k+1}r\cup b)-(a'\cup f+(-1)^{k+1}r'\cup b')\\
&=&(a\cup f+(-1)^{k+1}r\cup b)-(a\cup f+(-1)^{k+1}r\cup b')\\
& &\qquad+(a\cup f+(-1)^{k+1}r\cup b')-(a'\cup f+(-1)^{k+1}r'\cup b')\\
           &=&(-1)^{k+1}r\cup (b-b')+(a-a')\cup f+(-1)^{k+1}(r-r')\cup b'\\
           &=&(-1)^{k+1}r\cup (Dc'+t')+(Dc+t)\cup f+(-1)^{k+1}(-Dt)\cup b'\\
           &=&(-1)^{k+1}r\cup t'+D(r\cup c')+D(c\cup f)+t\cup f+(-1)^kD(t\cup b')-t\cup(f-s')\\
           &=&D(r\cup c'+c\cup f+(-1)^kt\cup b')+(-1)^{k+1}r\cup t'+t\cup s'
               \end{eqnarray*}

The calculation above shows $(a\cup f+(-1)^{k+1}r\cup b)$ and
$(a'\cup f+(-1)^{k+1}r'\cup b')$ represent the same spark class
whenever the cup products in the sums $r\cup c'+c\cup f+(-1)^kt\cup b'\in
\bigoplus_{p+q=k+l}C^p(\mathcal{U},\mathcal{D}'^q)$ and
$(-1)^{k+1}r\cup t'+t\cup s'\in
\bigoplus_{p+q=k+l+1}C^p(\mathcal{U},\mathcal{IF}^q)$ are well-defined.
On one hand, it is trivial to see $r\cup c'$, $c\cup f$ and $(-1)^{k+1}r\cup t'$ are
well-defined. On the other hand, because $t$ is only related to $a$ and $a'$,
we always can choose $b'$ with $Db'=f-s'$ such that
$(-1)^kt\cup b'$ and $t\cup s'$ are well-defined.
\end{proof}

\section{Examples}
We shall describe $\hat{\mathbf{H}}^*_{smooth}(X)$  and
calculate the product in some low dimensional cases.

In \cite{HL1}, Harvey and Lawson gave very nice descriptions of $\hat{\mathbf{H}}^*_{smooth}(X)$
and grundles, which we review briefly here.

\textbf{Degree 0:} A smooth hyperspark of degree 0 is an element
$a\in C^0(\mathcal{U},\mathcal{E}^0)$ satisfying the spark equation
$$Da=e-r \text{ with } e\in \mathcal{E}^1(X) \text{ and } r\in C^1(\mathcal{U},\mathbb{Z}).$$
Moreover, $$Da=e-r \quad\Leftrightarrow\quad \delta a=-r\in C^1(\mathcal{U},\mathbb{Z}) \quad\text{and}\quad
da=e\in \mathcal{E}^1(X)\quad\Leftrightarrow\quad \delta a\in C^1(\mathcal{U},\mathbb{Z}).$$

Two smooth hypersparks $a$ and $a'$ are equivalent if and only if
$a-a'\in C^0(\mathcal{U},\mathbb{Z})$. Consider the exponential of a smooth
hyperspark $g\equiv e^{2\pi ia}$. $\delta a\in C^1(\mathcal{U},\mathbb{Z})$ implies
$g$ is a global circle valued function, and
$a-a'\in C^0(\mathcal{U},\mathbb{Z}) \Leftrightarrow e^{2\pi ia}=e^{2\pi ia'}$.
Therefore, we have
$$\hat{\mathbf{H}}^0_{smooth}(X)=\{g:X\rightarrow S^1: g \text{ is smooth }\}.$$

\textbf{Degree 1:} A smooth hyperspark of degree 1 is an element
$$a=a^{0,1}+a^{1,0}\in C^0(\mathcal{U},\mathcal{E}^1)\oplus C^1(\mathcal{U},\mathcal{E}^0)$$
satisfying the spark equation
$$Da=e-r \text{ with } e\in \mathcal{E}^2(X) \text{ and } r\in C^2(\mathcal{U},\mathbb{Z})$$
which is equivalent to equations
$\left\{
  \begin{array}{ll}
    \delta a^{1,0}=-r\in C^2(\mathcal{U},\mathbb{Z})  \\
    \delta a^{0,1}-da^{1,0}=0  \\
    da^{0,1}=e\in \mathcal{E}^2(X)
  \end{array}
\right.$.

If $g=e^{2\pi ia^{1,0}}$ then the spark equation is equivalent to
$\left\{
  \begin{array}{ll}
    \delta g=0  \\
    \delta a^{0,1}-\frac{1}{2\pi i}d\log g=0  \\
    da^{0,1}=e\in \mathcal{E}^2(X)
  \end{array}
\right.$.

Note that we can write $a^{0,1}=\{ a^{0,1}_i\}$ where $a^{0,1}_i\in \mathcal{E}^1(U_i)$,
and $g=\{ g_{ij}\}$ where each $g_{ij}$ is a circle valued function on $U_{ij}$.
Then
$$\delta g=0 \quad\Leftrightarrow\quad g_{jk}g_{ki}g_{ij}=1$$ i.e. $g_{ij}$ are transition functions of
a hermitian line bundle, and
$$\delta a^{0,1}-\frac{1}{2\pi i}d\log g=0 \quad\Leftrightarrow\quad
a^{0,1}_j-a^{0,1}_i=\frac{1}{2\pi i}\frac{dg_{ij}}{g_{ij}}$$ which means $a^{0,1}_i$ is
the connection 1-form on $U_i$.

Therefore, it is easy to see
$\hat{\mathbf{H}}^1_{smooth}(X)=$ the set of hermitian line bundles with hermitian connections.

\textbf{Degree} $\mathbf{n}=\dim X$: From Proposition 3.4, we have
$\hat{\mathbf{H}}^n_{smooth}(X)\cong H^n(X,\mathbb{R}/\mathbb{Z})\cong \mathbb{R}/\mathbb{Z}$.
Furthermore, every spark class of top degree can be represented by a global top form. And integrating
this form over $X$ (modulo $\mathbb{Z}$) gives the isomorphism
$\hat{\mathbf{H}}^n_{smooth}(X) \cong \mathbb{R}/\mathbb{Z}$.

\textbf{Ring structure on} $\hat{\mathbf{H}}^*_{smooth}(S^1)$.

Now we calculate the product $\hat{\mathbf{H}}^0_{smooth}(S^1)\otimes
\hat{\mathbf{H}}^0_{smooth}(S^1)\rightarrow \hat{\mathbf{H}}^1_{smooth}(S^1)$.

Let $X$ be the unit circle $S^1$. Fix a small number $\varepsilon >0$ and
an open cover $\mathcal{U}=\{U_1,U_2,U_3\}$
where $$U_1=\{e^{2\pi it} : t\in (-\varepsilon, \frac13)\},\quad
U_2=\{e^{2\pi it} : t\in (\frac13-\varepsilon, \frac23)\},\quad
U_3=\{e^{2\pi it} : t\in (\frac23-\varepsilon, 1)\}.$$

Let $a=(a_1,a_2,a_3)\in C^0(\mathcal{U},\mathcal{E}^0)=
\mathcal{E}^0(U_1)\oplus\mathcal{E}^0(U_2)\oplus\mathcal{E}^0(U_3)$ be
a smooth hyperspark representing a spark class
$\alpha\in  \hat{\mathbf{H}}^0_{smooth}(S^1)$. Since $\delta a\in C^1(\mathcal{U},\mathbb{Z})$,
we have $$(a_2-a_1)\mid_{U_{12}}\in\mathbb{Z},\quad (a_3-a_1)\mid_{U_{13}}\in\mathbb{Z},\quad
(a_3-a_2)\mid_{U_{23}}\in\mathbb{Z}.$$
Moreover, two smooth hypersparks represent the same spark class if and only if the
difference of them is in $C^0(\mathcal{U},\mathbb{Z})$, so we can choose the representative
$a$ to be of form:
$$a_1=a_2\mid_{U_{12}},\quad a_2=a_3\mid_{U_{23}},\quad a_1+N=a_3\mid_{U_{13}},\quad a_1(x_0)\in[0,1)$$
where $N$ is an integer and $x_0=e^{2\pi i\cdot0}\in U_1$.
It is easy to see the representative of this form is unique for any class.
Assume the spark equation for $a$ is $Da=e-r$ for $e\in \mathcal{E}^1(S^1)$,
$r=(r_{12},r_{23},r_{13})\in C^1(\mathcal{U},\mathbb{Z})$. Then $da=e$ is a global
1-form and $\delta a=(0,0,N)=-r$. If we have another smooth hyperspark $b$ of this form
representing spark class $\beta$ with $db=f$, $\delta b=(0,0,N')$,
then by the product formula in last section,
the product $\alpha\beta$ can be represented by $a\cup f-r\cup b$. In the case $r=0$, i.e. $N=0$,
$a$ is a global function and the product is represented by the global 1-form
$af$. Evaluating the integral $\int_{S^1}af \mod \mathbb{Z}$, we get a number in
$\mathbb{R}/\mathbb{Z}$ which representing the product under the isomorphism
$\hat{\mathbf{H}}^1_{smooth}(S^1)\cong \mathbb{R}/\mathbb{Z}$. To calculate the general
product, we need the following lemma.

Let $\tilde{\mathcal{S}}$ be the set
$\{f\in C^{\infty}(\mathbb{R}): f(x+1)-f(x) \in\mathbb{Z}\}$. In fact,
$\tilde{\mathcal{S}}$ is a group. We say
$f\sim g$ if and only if $f(x)-g(x)\equiv N\in \mathbb{Z}$. Define the quotient group
$\mathcal{S}=\tilde{\mathcal{S}}/\sim$. Note that we can identify
$\mathcal{S}$ with the set
$\{f\in C^{\infty}(\mathbb{R}): 0\leq f(0)<1, f(x+1)-f(x) \in\mathbb{Z}\}$.

\begin{lem}
There exists a group isomorphism $\hat{\mathbf{H}}^0_{smooth}(S^1)\cong\mathcal{S}$.
Moreover, for any $f(x)\in \mathcal{S}$,
we have the decomposition
$$f(x)=Nx+C+\sum_{k=1}^{\infty}(A_k\sin(2\pi kx)+B_k\cos(2\pi kx)).$$
Hence, we have the corresponding decomposition of a spark class.
\end{lem}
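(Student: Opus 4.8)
The plan is to establish the isomorphism $\hat{\mathbf{H}}^0_{smooth}(S^1)\cong \mathcal{S}$ directly from the explicit description of degree-$0$ smooth hypersparks, and then to obtain the Fourier decomposition by viewing an element of $\mathcal{S}$ as a smooth circle-valued map together with a winding number.

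First I would unwind the identification $\hat{\mathbf{H}}^0_{smooth}(S^1)=\{g:S^1\to S^1 : g\text{ smooth}\}$ from the Degree $0$ computation. Given a smooth map $g:S^1\to S^1$, pull it back along the universal cover $\mathbb{R}\to S^1$, $x\mapsto e^{2\pi ix}$, to get a smooth lift $\tilde g:\mathbb{R}\to\mathbb{R}$ with $e^{2\pi i\tilde g(x)}=g(e^{2\pi ix})$; the lift is well-defined up to an additive integer constant, and $\tilde g(x+1)-\tilde g(x)=N\in\mathbb{Z}$ is exactly the degree of $g$. This gives a well-defined element of $\mathcal{S}=\tilde{\mathcal{S}}/\!\sim$, and conversely any $f\in\tilde{\mathcal{S}}$ descends to $e^{2\pi if}:S^1\to S^1$, with $f\sim f'$ iff $e^{2\pi if}=e^{2\pi if'}$. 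One checks the construction is a homomorphism: multiplication of maps corresponds to addition of lifts, which is addition in $\tilde{\mathcal{S}}$ and hence in $\mathcal{S}$. Surjectivity and injectivity are then immediate, so $\hat{\mathbf{H}}^0_{smooth}(S^1)\cong\mathcal{S}$ as groups. (Alternatively, one could match this against the concrete normalized representative $a=(a_1,a_2,a_3)$ from the preceding paragraph: patching $a_1,a_2,a_3$ along the overlaps using the integer jumps produces precisely a function $f\in C^\infty(\mathbb{R})$ with $f(x+1)-f(x)=N$ and $0\le f(0)<1$, which is the normalized representative of $\mathcal{S}$.)

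Next, for the decomposition, fix $f\in\mathcal{S}$ with integer jump $N$, i.e. $f(x+1)-f(x)=N$. Then $h(x)\equiv f(x)-Nx$ satisfies $h(x+1)=h(x)$, so $h$ is a smooth $1$-periodic function on $\mathbb{R}$, hence a smooth function on $S^1$. Its Fourier series converges (uniformly, since $h$ is $C^\infty$) to $h(x)=C+\sum_{k=1}^\infty\bigl(A_k\sin(2\pi kx)+B_k\cos(2\pi kx)\bigr)$ with $C=\int_0^1 h$, $A_k=2\int_0^1 h(x)\sin(2\pi kx)\,dx$, $B_k=2\int_0^1 h(x)\cos(2\pi kx)\,dx$. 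Substituting back gives the claimed formula $f(x)=Nx+C+\sum_{k=1}^\infty\bigl(A_k\sin(2\pi kx)+B_k\cos(2\pi kx)\bigr)$. The "corresponding decomposition of a spark class" is then read off: $\alpha\in\hat{\mathbf{H}}^0_{smooth}(S^1)$ corresponding to $f$ decomposes, under the group structure, as $N\cdot[x] + [\text{constant }C] + \sum_k\bigl(A_k[\sin(2\pi kx)] + B_k[\cos(2\pi kx)]\bigr)$, where $[x]$ is the class of the identity map $S^1\to S^1$ and the trigonometric pieces are the classes of the corresponding global real-valued (hence $S^1$-valued via exponentiation) functions.

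I do not anticipate a serious obstacle here; the statement is essentially a repackaging of the covering-space description of maps $S^1\to S^1$ together with classical Fourier analysis of smooth periodic functions. The only point requiring a little care is bookkeeping around the quotient by $\mathbb{Z}$: one must check that the group operation on $\hat{\mathbf{H}}^0_{smooth}(S^1)$ (pointwise multiplication of circle-valued maps) matches addition in $\mathcal{S}$ and is compatible with the $\sim$-relation, and that the normalization $0\le f(0)<1$ picks a unique representative in each class — but both are routine. The convergence and smoothness of the Fourier series of the periodic part $h$ is standard. Thus the proof reduces to assembling these standard facts in the order above.
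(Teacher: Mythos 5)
Your proposal is correct and follows essentially the same route as the paper: both arguments amount to lifting a degree-$0$ class (via the covering $\mathbb{R}\to S^1$, equivalently via the unique normalized representative $a=(a_1,a_2,a_3)$) to an element of $\mathcal{S}$ well-defined up to the $\sim$-relation, and then applying the Fourier expansion to the periodic part $f(x)-Nx$. Your additional check that the bijection respects the group operations is a welcome bit of extra care, but it does not constitute a different method.
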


\begin{proof}
For any spark class $\alpha\in \hat{\mathbf{H}}^0_{smooth}(S^1)$, there exists
a unique representative $a=(a_1,a_2,a_3)\in C^0(\mathcal{U},\mathcal{E}^0)$ with
$$a_1=a_2\mid_{U_{12}},\quad a_2=a_3\mid_{U_{23}},\quad a_1+N=a_3\mid_{U_{13}},\quad a_1(x_0)\in[0,1).$$
We can lift $a$ to a smooth function $\tilde{a}\in\mathcal{S}$ uniquely, and
establish a $1-1$ correspondence between
$\hat{\mathbf{H}}^0_{smooth}(S^1)$ and the set $\mathcal{S}$.

For any smooth function $f\in \mathcal{S}$ with
$f(x+1)-f(x)=N$ for some $N\in\mathbb{Z}$, $f(x)-Nx$ is periodic.
Hence we have the Fourier expansion
$$f(x)-Nx=C+\sum_{k=1}^{\infty}(A_k\sin(2\pi kx)+B_k\cos(2\pi kx)).$$
On the other hand, under the $1-1$ correspondence, every component of the Fourier
expansion is still in $\mathcal{S}$, and hence represents a spark class.
\end{proof}

Now let us calculate the product
$\hat{\mathbf{H}}^0_{smooth}(S^1)\otimes\hat{\mathbf{H}}^0_{smooth}(S^1)
\rightarrow \hat{\mathbf{H}}^1_{smooth}(S^1)$. We use identification
$\hat{\mathbf{H}}^0_{smooth}(S^1)\cong\mathcal{S}$ and
$\hat{\mathbf{H}}^1_{smooth}(S^1)\cong \mathbb{R}/\mathbb{Z}$, and
represent the product as $\mathcal{S}\otimes \mathcal{S}\rightarrow \mathbb{R}/\mathbb{Z}$.

\begin{thm}
For $a,b\in \mathcal{S}$ with decompositions
$$a=Nx+C+\sum_{k=1}^{\infty}(A_k\sin(2\pi kx)+B_k\cos(2\pi kx))$$ and
$$b=N'x+C'+\sum_{k=1}^{\infty}(A_k'\sin(2\pi kx)+B_k'\cos(2\pi kx)),$$
the product $$a*b=\frac{NN'}{2}+CN'-C'N+\sum_{k=1}^{\infty}(A_k'B_k-A_kB_k')\pi k \mod \mathbb{Z}.$$
\end{thm}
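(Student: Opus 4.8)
The plan is to reduce everything to the product formula already established (Theorem 4.4), namely that for degree-0 classes represented by smooth hypersparks $a,b$ of the normalized form described just before the lemma, with $Da = e - r$, $Db = f - s$, $\delta a = (0,0,N)$, $\delta b = (0,0,N')$, the product $\alpha*\beta$ is represented by the cochain $a\cup f + (-1)^{0+1} r\cup b = a\cup f - r\cup b$ in $\bigoplus_{p+q=1}C^p(\mathcal{U},\mathcal{E}^q)$, and then to push this representative to a global $1$-form whose integral over $S^1$ (mod $\mathbb{Z}$) is the answer. So the first step is to write down explicitly the two summands: $a\cup f\in C^0(\mathcal{U},\mathcal{E}^1)$ is just the collection $\{\tilde a\, f\}$ (using the lift $\tilde a$; note $f = df = \tilde b'\,dx$ is a global $1$-form since $b$ glues to a global $S^1$-valued function with $d\tilde b$ descending), and $r\cup b\in C^1(\mathcal{U},\mathcal{E}^0)$ is a Čech $1$-cochain with values in smooth functions, supported on the single overlap $U_{13}$ where $r_{13}=-N$, giving $(r\cup b)_{13} = -N\,\tilde b\big|_{U_{13}}$ up to the sign conventions in Proposition 4.2.

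Next I would convert this mixed-bidegree cocycle into a genuine global form. By the standard Čech--de Rham argument (and as the examples section already exploits for degree $1$ and top degree), any smooth-hyperspark cocycle of degree $1$ whose $C^2(\mathcal{U},\mathbb{Z})$ part vanishes — which here it does, because $D(a\cup f - r\cup b) = e\wedge f - r\cup s$ and for $S^1$ the integral class $r\cup s$ is the cup product of two degree-$1$ integral cocycles, hence a degree-$2$ cocycle on a $1$-dimensional complex, so it is exact over $\mathbb{Z}$ — is equivalent (as a spark class) to a global $1$-form. Concretely: choose a partition of unity $\{\rho_i\}$ subordinate to $\mathcal{U}$ and modify $a\cup f - r\cup b$ by $D(\text{something})$ to kill the $C^1$-component; the resulting global $1$-form is $\omega = \tilde a\,\tilde b'\,dx - d\big(\sum \rho_i(\text{local primitive})\big)$, and one computes $\int_{S^1}\omega$. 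The cleanest bookkeeping is probably to observe that $\int_{S^1}\omega$ equals $\int_0^1 \tilde a(x)\tilde b'(x)\,dx$ corrected by a boundary term coming from the jump $N$ of $\tilde a$ across $x=0$ and the jump $N'$ of $\tilde b$; integration by parts on $[0,1]$ with $\tilde a(1)-\tilde a(0)=N$, $\tilde b(1)-\tilde b(0)=N'$ gives $\int_0^1\tilde a\tilde b'\,dx = \tilde a(1)\tilde b(1) - \tilde a(0)\tilde b(0) - \int_0^1 \tilde a'\tilde b\,dx$, and symmetrizing yields the antisymmetric combination $\tfrac12(\tilde a(1)\tilde b(1)-\tilde a(0)\tilde b(0)) + \tfrac12\int_0^1(\tilde a\tilde b' - \tilde a'\tilde b)\,dx$, which already exhibits the shape $\tfrac{NN'}{2} + (\text{terms linear in }C,C') + (\text{Fourier cross terms})$.

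Then the last step is the Fourier computation: substitute $\tilde a = Nx + C + \sum_k(A_k\sin 2\pi kx + B_k\cos 2\pi kx)$ and likewise for $\tilde b$, and evaluate $\int_0^1 \tilde a\,\tilde b'\,dx$ mod $\mathbb{Z}$ using orthogonality. The linear-in-$x$ part of $\tilde a$ paired against the constant-derivative part $N'\,dx$ of $\tilde b'$ contributes $\int_0^1(Nx)(N')\,dx = NN'/2$; the constant $C$ against $N'$ gives $CN'$; by antisymmetry the $C'$ against $N$ gives $-C'N$ (the diagonal "$NN'x$ vs. constant" cross terms from the two linear parts cancel the over-counting, which is exactly why the coefficient of $NN'$ is $1/2$ and not $1$); and orthogonality of $\{\sin 2\pi kx,\cos 2\pi kx\}$ kills all terms except the within-frequency pairings $\int_0^1(A_k\sin + B_k\cos)\cdot 2\pi k(A_k'\cos - B_k'\sin)\,dx = \pi k(A_kB_k' - B_kA_k' )$ — I will need to be careful with this sign so that it comes out $(A_k'B_k - A_kB_k')\pi k$ as stated, which will pin down the orientation/sign conventions in $a\cup f$ vs. $-r\cup b$ and in $\int_{S^1}$. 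Assembling the pieces gives the claimed formula.

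The main obstacle I anticipate is \emph{not} the Fourier arithmetic but the bookkeeping of the boundary/Čech term — making rigorous that the mixed-bidegree representative $a\cup f - r\cup b$ has the same spark class as the explicit global $1$-form whose integral I compute, and tracking all the signs through Proposition 4.2's twisted cup product, the total differential $D=\delta+(-1)^p d$, and the identification $\hat{\mathbf H}^1_{smooth}(S^1)\cong\mathbb{R}/\mathbb{Z}$ via integration. A clean way to sidestep part of this is to do the whole computation instead on the de Rham--Federer side (legitimate by Theorem 4.5): represent $\alpha$ by the current/function $\tilde a$ on $S^1$ with a singular "$-N\cdot[\text{point}]$'' in its $\delta_2$-data, and directly use the formula $\alpha*\beta = [a\wedge f + (-1)^{k+1} r\wedge b]$ from Theorem 3.14 with $r$ a point-supported integral current; then the product is literally $\int_{S^1}\tilde a\,f$ plus the evaluation of $\tilde b$ at the point where $r$ is supported, and the $NN'/2$ arises from symmetrizing the two equal representatives $a\wedge f + (-1)^{k+1}r\wedge b$ and $a\wedge s + (-1)^{k+1}e\wedge b$. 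I would present whichever of the two routes produces the fewest sign headaches, but both terminate in the same three-line Fourier integral.
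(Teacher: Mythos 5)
Your overall strategy (invoke the product formula of Theorem 4.4, reduce the mixed representative $a\cup f-r\cup b$ to something integrable over $S^1$, then do Fourier arithmetic) is viable, but the one step you wave at is exactly where your sketch goes wrong, and the formulas you do commit to are incorrect. The product is \emph{not} $\int_0^1\tilde a\,\tilde b'\,dx$, and your integration-by-parts ``symmetrization'' is an algebraic identity, so it is the same number and cannot create the antisymmetry you want. Concretely, two of your claims fail: orthogonality does not kill the pairing of the non-periodic linear part $Nx$ against the oscillatory part of $\tilde b'$, since $\int_0^1 x\sin(2\pi kx)\,dx=-\tfrac{1}{2\pi k}$, so $\int_0^1\tilde a\,\tilde b'\,dx$ contains the spurious terms $\sum_k NB_k'$; and the term $-C'N$ simply does not occur in that integral (saying ``by antisymmetry'' appeals to graded commutativity of the product, not to anything your integral produces). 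Both defects are repaired precisely by the \v{C}ech correction you left unresolved: the $-r\cup b$ component contributes $-N\tilde b(0)=-NC'-N\sum_k B_k'$ modulo $\mathbb{Z}$, and indeed $\int_0^1\tilde a\,\tilde b'\,dx-N\tilde b(0)$ equals the stated answer. So the ``boundary/\v{C}ech bookkeeping'' you flag as the main obstacle is not a technicality to be deferred; it carries the terms $-C'N$ and cancels the $NB_k'$'s, and without it the proof does not close. Your fallback via the de Rham--Federer complex has the same unresolved core: if both lifts jump at $x_0$, then $r\wedge b$ is not defined (this is exactly why Proposition 3.13 is needed), and a naive symmetric evaluation of $\tilde b$ at the jump gives $[Nx]*[N'x]=0$ rather than $\tfrac{NN'}{2}$.

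For comparison, the paper avoids all of this by exploiting bilinearity: it multiplies individual Fourier modes, where at least one factor is a globally defined periodic function, so the product really is the global $1$-form $af$ and orthogonality applies legitimately; the only genuinely non-global case, $[Nx]*[N'x]$, is handled by an explicit \v{C}ech manipulation (the representative $NN'x\,dx+(0,0,NN'x)$ is shifted by $D(\tfrac12NN'x^2)$ to the constant cocycle $-(0,0,\tfrac12NN')$, giving $\tfrac{NN'}{2}$), and the general formula is then assembled by distributivity and graded commutativity (which supplies $[Nx]*[C']=-C'N$). If you want to keep your ``all at once'' route, you must actually perform the reduction of $a\cup f-r\cup b$ to a global form (or to a constant \v{C}ech cocycle) and verify that it produces exactly the $-N\tilde b(0)$ correction; as written, the proposal asserts the wrong value for the key intermediate quantity.
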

\begin{proof}
First, we calculate the product $[\sin2\pi kx]*[\cos2\pi k'x]$. Since $\sin2\pi kx$
corresponds to a smooth hyperspark
$a_k=(\sin2\pi kx\mid_{U_1},\sin2\pi kx\mid_{U_2},\sin2\pi kx\mid_{U_3})$ with
spark equations $$da_k=d\sin2\pi kx = 2\pi k\cos2\pi kxdx\text{ and } \delta a_k=(0,0,0)=0.$$
Similarly, $\cos2\pi k'x$ corresponds a smooth hyperspark $b_{k'}$ with
$$db_{k'}=-2\pi k'\sin2\pi k'xdx\text{ and } \delta b_{k'}= 0.$$
Then by the product formula we have
\begin{eqnarray*}
& &[\sin2\pi kx]*[\cos2\pi k'x]\\
&=&\int_0^1\sin2\pi kxd\cos2\pi k'x\\
&=&-2\pi k'\int_0^1\sin2\pi kx \sin2\pi k'xdx\\
&=&-2\pi k'\int_0^1\frac12(\cos2\pi(k-k')x-\cos2\pi(k+k')x)dx\\
&=&\left\{
     \begin{array}{ll}
       -\pi k, & k=k' \\
       0, & \hbox{otherwise.}
     \end{array}
   \right.
               \end{eqnarray*}

Similarly, we can calculate
\begin{eqnarray*}
& &[C]*[Nx]=\int_0^1CNdx=CN\\
& &[C]*[\sin2\pi kx]=\int_0^1Cd\sin2\pi kx=0\\
& &[C]*[\cos2\pi kx]=\int_0^1Cd\cos2\pi kx=0\\
& &[C]*[C']=0\\
& &[\sin2\pi kx]*[\sin2\pi k'x]=\int_0^1\sin2\pi kxd\sin2\pi k'x=0\\
& &[\cos2\pi kx]*[\cos2\pi k'x]=\int_0^1\cos2\pi kxd\cos2\pi k'x=0\\
& &[\sin2\pi kx]*[Nx]=\int_0^1\sin2\pi kxNdx=0\\
& &[\cos2\pi kx]*[Nx]=\int_0^1\cos2\pi kxNdx=0
               \end{eqnarray*}

$Nx$ corresponds to a smooth hyperspark
$z_n=(Nx\mid_{U_1},Nx\mid_{U_2},Nx\mid_{U_3})$ with
spark equations $$dz_n=dNx = Ndx\text{ and } \delta z_n=(0,0,N).$$
So the product of $Nx$ and $N'x$ can be represented by smooth hyperspark
$$NxdN'x+(-1)^1(-(0,0,N))N'x=NN'xdx+(0,0,NN'x)\in C^0(\mathcal{U},\mathcal{E}^1)\oplus
C^1(\mathcal{U},\mathcal{E}^0).$$

Let $\frac12NN'x^2$ denote the element
$$(\frac12NN'x^2\mid_{U_1},\frac12NN'x^2\mid_{U_2},\frac12NN'x^2\mid_{U_3})
\in \mathcal{E}^0(U_1)\oplus\mathcal{E}^0(U_2)\oplus\mathcal{E}^0(U_3)=C^0(\mathcal{U},\mathcal{E}^0).$$
Then $$D(\frac12NN'x^2)=d(\frac12NN'x^2)+\delta(\frac12NN'x^2)=NN'xdx+(0,0,NN'(x+\frac12)).$$
Hence, $NN'xdx+(0,0,NN'x)$ is equivalent to
$$NN'xdx+(0,0,NN'x)-D(\frac12NN'x^2)=0-(0,0,\frac12NN').$$
And $-(0,0,\frac12NN')\in C^1(\mathcal{U},\mathbb{R})$ equals
$-\frac12NN'\equiv\frac12NN' \mod \mathbb{Z}$ under the isomorphism
$$H^1(S^1,\mathbb{R})/H^1(S^1,\mathbb{Z})\cong H^1(S^1,\mathbb{R}/\mathbb{Z})\cong \mathbb{R}/\mathbb{Z}.$$

So we have
$$[Nx]*[N'x]=\frac{NN'}{2}$$

Finally, by distributivity of graded commutativity of the product, we have

$$[Nx+C+\sum_{k=1}^{\infty}(A_k\sin(2\pi kx)+B_k\cos(2\pi kx))]
[N'x+C'+\sum_{k=1}^{\infty}(A_k'\sin(2\pi kx)+B_k'\cos(2\pi kx))]$$
$$=\frac{NN'}{2}+CN'-C'N+\sum_{k=1}^{\infty}(A_k'B_k-A_kB_k')\pi k \mod \mathbb{Z}$$

\end{proof}

The next example we shall discuss is the product of two smooth hypersparks of degree 1
on a 3-dimensional manifold $X$. Since $\hat{\mathbf{H}}^1_{smooth}(X)$ is
the set of hermitian line bundles with hermitian connections and
$\hat{\mathbf{H}}^3_{smooth}(X)\cong \mathbb{R}/\mathbb{Z}$, the product
associates a number modulo $\mathbb{Z}$ to two
hermitian line bundles with hermitian connections.

For two smooth hyperspark classes $\alpha,\beta \in \hat{\mathbf{H}}^1_{smooth}(X)$,
assume
$$a=a^{0,1}+a^{1,0}\in C^0(\mathcal{U},\mathcal{E}^1)\oplus C^1(\mathcal{U},\mathcal{E}^0)
\text{ and }
b=b^{0,1}+b^{1,0}\in C^0(\mathcal{U},\mathcal{E}^1)\oplus C^1(\mathcal{U},\mathcal{E}^0)$$
are representatives of $\alpha$ and $\beta$ respectively with spark equations
$$Da=e-r \text{ and } Db=f-s.$$ Then we have
$\left\{
  \begin{array}{ll}
    \delta a^{1,0}=-r\in C^2(\mathcal{U},\mathbb{Z})  \\
    \delta a^{0,1}-da^{1,0}=0  \\
    da^{0,1}=e\in \mathcal{E}^2(X)
  \end{array}
\right.$
and
$\left\{
  \begin{array}{ll}
    \delta b^{1,0}=-s\in C^2(\mathcal{U},\mathbb{Z})  \\
    \delta b^{0,1}-db^{1,0}=0  \\
    db^{0,1}=f\in \mathcal{E}^2(X)
  \end{array}
\right.$.

By product formula, we have $\alpha\beta=[a\cup f+r\cup b]$ where
$$a\cup f+r\cup b=a^{0,1}\wedge f+a^{1,0}\wedge f+r\wedge b^{0,1}+r\wedge b^{1,0}\in
C^0(\mathcal{U},\mathcal{E}^3)\oplus C^1(\mathcal{U},\mathcal{E}^2)\oplus
C^2(\mathcal{U},\mathcal{E}^1)\oplus C^3(\mathcal{U},\mathcal{E}^0).$$
$a\cup f+r\cup b$ is a cycle in $\bigoplus_{i+j=3}C^i(\mathcal{U},\mathcal{E}^j)$ representing
a class in $H^3(X,\mathbb{R})\cong \mathbb{R}$. In general, it is hard to calculate
the class. However, when one of $\alpha$ and $\beta$ represents a flat bundle,
we can calculate their product.

\begin{lem}
If $\beta\in H^1(X,\mathbb{R}/\mathbb{Z})\subset \hat{\mathbf{H}}^1_{smooth}(X)$ represents
a flat bundle on $X$, then their exists a smooth hyperspark $b=b^{0,1}+b^{1,0}$ representing
$\beta$ with $b^{0,1}=0$ and $b^{1,0}\in  C^1(\mathcal{U},\mathbb{R})$.
\end{lem}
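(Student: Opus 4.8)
The plan is to produce the desired hyperspark representative starting from the standard Čech description of a flat bundle. Since $\beta \in H^1(X,\mathbb{R}/\mathbb{Z}) \subset \hat{\mathbf{H}}^1_{smooth}(X)$ sits in the left column of the $3\times 3$ grid of Proposition 3.5, its image under $\delta_1$ is zero, so any representative has $db^{0,1} = f = 0$, i.e.\ the curvature vanishes; this is the geometric content of ``flat.'' First I would recall that a class in $H^1(X,\mathbb{R}/\mathbb{Z})$ is represented by a Čech cocycle valued in the constant sheaf $\mathbb{R}/\mathbb{Z}$, that is, by locally constant transition functions $g_{ij}$ with values in $\mathbb{R}/\mathbb{Z}$ satisfying $g_{jk} - g_{ik} + g_{ij} = 0$. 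Lifting each $g_{ij}$ to a real constant $b^{1,0}_{ij} \in \mathbb{R}$ on $U_{ij}$ (possible since each $U_{ij}$ is contractible, hence connected), we obtain $b^{1,0} \in C^1(\mathcal{U},\mathbb{R})$ with $\delta b^{1,0} \in C^2(\mathcal{U},\mathbb{Z})$.

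Next I would set $b := b^{1,0}$, i.e.\ take $b^{0,1} = 0$, and verify the three equations characterizing a degree-$1$ smooth hyperspark as displayed just before this lemma. Writing $s := -\delta b^{1,0} \in C^2(\mathcal{U},\mathbb{Z})$, the first equation $\delta b^{1,0} = -s$ holds by construction; the second equation $\delta b^{0,1} - db^{1,0} = 0$ holds because $b^{0,1} = 0$ and because each $b^{1,0}_{ij}$ is a constant, so $db^{1,0} = 0$; the third equation $db^{0,1} = f$ forces $f = 0$, consistent with flatness. Thus $b = b^{1,0}$ is genuinely a smooth hyperspark of degree $1$ with spark equation $Db = -s$, and it satisfies the conclusion $b^{0,1} = 0$, $b^{1,0} \in C^1(\mathcal{U},\mathbb{R})$.

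The remaining point is to check that this $b$ actually represents the given class $\beta$, not merely some class mapping to the same element of $H^2(X,\mathbb{Z})$. Here I would trace through the isomorphism of Proposition 3.5 (and Corollary 3.7 identifying $\hat{\mathbf{H}}^*_{spark}$ with $\hat{\mathbf{H}}^*_{smooth}$): the map $H^1(X,\mathbb{R}/\mathbb{Z}) \hookrightarrow \hat{\mathbf{H}}^1_{smooth}(X)$ is precisely the composite sending a locally constant $\mathbb{R}/\mathbb{Z}$-valued Čech $1$-cocycle to the corresponding smooth hyperspark class, which is exactly the construction above. Alternatively, one observes that any other smooth hyperspark $b'$ representing $\beta$ satisfies $b - b' = Dc + t$ for suitable $c, t$, and the equivalence relation together with $f = f' = 0$ pins down $[b]$; since the flat bundle determines $g_{ij} \in \mathbb{R}/\mathbb{Z}$ uniquely up to coboundary, the lift $b^{1,0}$ is well-defined up to $D(\text{degree-}0) + C^1(\mathcal{U},\mathbb{Z})$, which is exactly the smooth-hyperspark equivalence. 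The main obstacle, though a minor one, is making the identification in this last paragraph genuinely precise rather than hand-waving — i.e.\ confirming that the inclusion $H^1(X,\mathbb{R}/\mathbb{Z}) \hookrightarrow \hat{\mathbf{H}}^1$ from the grid really is induced by the evident cocycle-to-hyperspark assignment; once that is granted, everything else is a direct unwinding of definitions.
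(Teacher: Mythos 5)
Your construction is correct, and it lands on exactly the representative the paper has in mind, but you reach it by a different route. The paper's proof is a single geometric sentence: a flat line bundle admits a trivialization by parallel local frames, in which the connection forms vanish and the transition functions are constant; translated through the degree-$1$ dictionary (hypersparks $=$ hermitian line bundles with connection) this immediately gives $b^{0,1}=0$ and $b^{1,0}$ locally constant with $\delta b^{1,0}\in C^2(\mathcal{U},\mathbb{Z})$. You instead work purely cohomologically: represent $\beta$ by a locally constant $\mathbb{R}/\mathbb{Z}$-valued \v{C}ech cocycle, lift it over the connected intersections of the good cover to a constant cochain $b^{1,0}\in C^1(\mathcal{U},\mathbb{R})$, set $b^{0,1}=0$, and check the three spark equations. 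This buys you independence from the bundle-theoretic input (no parallel-transport argument needed), at the cost of the compatibility point you yourself flag: that the inclusion $H^1(X,\mathbb{R}/\mathbb{Z})\hookrightarrow\hat{\mathbf{H}}^1_{smooth}(X)$ in the $3\times 3$ grid is induced by the evident cocycle-to-hyperspark assignment. That point is genuinely routine rather than a gap: in the grid of Proposition 2.4 the map $H^k(F^*/I^*)\to\hat{\mathbf{H}}^k$ sends a relative cycle, i.e.\ an element $b\in F^k$ with $Db\in I^{k+1}$, to its spark class (these are precisely the sparks with $e=0$), and the identification $H^1(F^*/I^*)\cong H^1(X,\mathbb{R}/\mathbb{Z})$ for the smooth hyperspark complex is realized exactly by your lifted constant cocycle, with the ambiguity $\mathbb{Z}$-valued cochains plus $D$ of degree-$0$ elements matching the hyperspark equivalence relation. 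So your argument is complete once that unwinding is written out; the paper's proof is shorter but leans on the geometric fact and on the line-bundle dictionary, while yours makes the cohomological mechanism explicit.
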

\begin{proof}
For any flat line bundle, there exists a trivialization with constant transition functions and
zero connection forms (with respect to local basis).
\end{proof}

By the Lemma, if $\beta$ is flat, we have $\alpha\beta=[a\cup f+r\cup b]=[r\wedge b^{1,0}]$
where $r\wedge b^{1,0}\in C^3(\mathcal{U},\mathbb{R})\subset C^3(\mathcal{U},\mathcal{E}^0)$
is a \v{C}ech cycle representing a cohomology class in $H^3(X,\mathbb{R})$. Hence,
we proved the following proposition.

\begin{prop}
$X$ is a 3-dimensional manifold. Let $\alpha \in \hat{\mathbf{H}}^1_{smooth}(X)$ and
$\beta\in H^1(X,\mathbb{R}/\mathbb{Z})\subset \hat{\mathbf{H}}^1_{smooth}(X)$. Choosing
representatives as above, we have
$\alpha\beta=[r\wedge b^{1,0}]\in H^3(X,\mathbb{R}/\mathbb{Z})\cong\mathbb{R}/\mathbb{Z}$.
\end{prop}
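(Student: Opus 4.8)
The plan is to reduce the statement to a direct application of the product formula of Theorem 4.4, after choosing the distinguished representative of the flat factor supplied by Lemma 5.6. First I would invoke Lemma 5.6 to pick a smooth hyperspark $b=b^{0,1}+b^{1,0}$ representing $\beta$ with $b^{0,1}=0$ and $b^{1,0}\in C^1(\mathcal{U},\mathbb{R})$. Writing out the spark equation $Db=f-s$ componentwise in the bidegree decomposition $C^0(\mathcal{U},\mathcal{E}^2)\oplus C^1(\mathcal{U},\mathcal{E}^1)\oplus C^2(\mathcal{U},\mathcal{E}^0)$, one reads off $\delta b^{1,0}=-s\in C^2(\mathcal{U},\mathbb{Z})$, the cocycle condition $db^{1,0}=\delta b^{0,1}=0$ (so $b^{1,0}$ is a locally constant cochain), and, crucially, $f=db^{0,1}=0$.

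Next I would apply Theorem 4.4 with $k=l=1$ to the representatives $a=a^{0,1}+a^{1,0}$ and $b=b^{1,0}$, with spark equations $Da=e-r$ and $Db=-s$. Since $(-1)^{k+1}=1$, the product is represented by $a\cup f+r\cup b$, and because $f=0$ the first term vanishes and the second reduces to $r\cup b^{1,0}$. Here $r\in C^2(\mathcal{U},\mathcal{E}^0)$ and $b^{1,0}\in C^1(\mathcal{U},\mathcal{E}^0)$ both have form-degree zero, so the sign $(-1)^{js}$ in the cup product of Proposition 4.2 is $+1$ and $r\cup b^{1,0}=r\wedge b^{1,0}$, with $(r\wedge b^{1,0})_{i_0i_1i_2i_3}=r_{i_0i_1i_2}\,b^{1,0}_{i_2i_3}$. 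As each value is a product of an integer and a real number, $r\wedge b^{1,0}\in C^3(\mathcal{U},\mathbb{R})\subset C^3(\mathcal{U},\mathcal{E}^0)$, and since both factors are locally constant so is $r\wedge b^{1,0}$; in particular $D(r\wedge b^{1,0})=\delta(r\wedge b^{1,0})=-r\cup s\in C^4(\mathcal{U},\mathbb{Z})$ (as already implicit in the Leibniz computation of Theorem 4.4), so $r\wedge b^{1,0}$ is a genuine $D$-cocycle modulo $C^*(\mathcal{U},\mathbb{Z})$ of total degree $3$.

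Finally I would identify the resulting class. Since $\dim X=3$ we have $\mathcal{E}^4(X)=0$, so $\alpha\beta\in\hat{\mathbf{H}}^3_{smooth}(X)$, and by the degree-$n$ discussion of Section 5 (equivalently the middle row of the grid at $k=n$, where $\mathcal{Z}^{n+1}_0(X)=0$) one has $\hat{\mathbf{H}}^3_{smooth}(X)\cong H^3(X,\mathbb{R}/\mathbb{Z})\cong\mathbb{R}/\mathbb{Z}$; under this identification the locally constant \v{C}ech cocycle $r\wedge b^{1,0}$ represents a well-defined class in $H^3(X,\mathbb{R}/\mathbb{Z})$, namely $\alpha\beta=[r\wedge b^{1,0}]$. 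The argument uses no new idea beyond the observation that flatness of $\beta$ forces $f=0$; the only thing requiring care — and it is mild — is the bidegree and sign bookkeeping needed to see that $a\cup f$ genuinely drops out, that $r\cup b^{1,0}$ has the stated Čech degree $3$ and form-degree $0$ with trivial cup-product sign, and that $r\wedge b^{1,0}$ is closed enough for $[r\wedge b^{1,0}]$ to make sense as an element of $\mathbb{R}/\mathbb{Z}$.
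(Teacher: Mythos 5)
Your proposal is correct and follows essentially the same route as the paper: choose the flat representative with $b^{0,1}=0$ and $b^{1,0}\in C^1(\mathcal{U},\mathbb{R})$ (so $f=db^{0,1}=0$), apply the product formula of Theorem 4.4 so that only $r\wedge b^{1,0}$ survives, and identify the result under $\hat{\mathbf{H}}^3_{smooth}(X)\cong H^3(X,\mathbb{R}/\mathbb{Z})\cong\mathbb{R}/\mathbb{Z}$. If anything, your observation that $D(r\wedge b^{1,0})=-r\cup s\in C^4(\mathcal{U},\mathbb{Z})$, so that $r\wedge b^{1,0}$ is a cocycle only modulo integers and hence defines a class in $H^3(X,\mathbb{R}/\mathbb{Z})$ rather than $H^3(X,\mathbb{R})$, is stated more carefully than in the paper.
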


\begin{rmk}
It is easy to generalize this Proposition to the product
$$\hat{\mathbf{H}}^{n-2}_{smooth}(X)\otimes \hat{\mathbf{H}}^1_{smooth}(X)
\rightarrow \hat{\mathbf{H}}^n_{smooth}(X)$$ for an $n$-dimensional manifold $X$
when the second factor $\beta\in  H^1(X,\mathbb{R}/\mathbb{Z})\subset \hat{\mathbf{H}}^1_{smooth}(X)$.
\end{rmk}

\begin{rmk}
From this Proposition, we see the product $\alpha\beta$ only depends the first chern class $[r]$
of $\alpha$. We can also see this fact from the next Lemma.

Moreover, the product coincides with the natural product
$H^2(X,\mathbb{Z})\otimes H^1(X,\mathbb{R}/\mathbb{Z})\rightarrow
H^3(X,\mathbb{R}/\mathbb{Z})$.
\end{rmk}

\begin{lem}
$X$ is a smooth manifold. If $\alpha\in  \hat{\mathbf{H}}^k_{\infty}(X)
\subset \hat{\mathbf{H}}^k_{smooth}(X)$ and
$\beta\in H^l(X,\mathbb{R}/\mathbb{Z})\subset \hat{\mathbf{H}}^l_{smooth}(X)$,
then $\alpha\beta=0$.
\end{lem}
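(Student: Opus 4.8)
The plan is to read the result off the product formula of Theorem 4.4 after choosing the two representatives as conveniently as possible. The two facts needed are: first, by the identification $\hat{\mathbf{H}}^k_{\infty}(X)\cong\mathcal{E}^k(X)/\mathcal{Z}_0^k(X)$ of Proposition 3.4, every class in $\hat{\mathbf{H}}^k_{\infty}(X)$ admits a representative which is a global smooth form; second, by the $3\times3$ grid of Proposition 3.4 (transported to the smooth hyperspark complex via Corollary 3.12), $H^l(X,\mathbb{R}/\mathbb{Z})$ is exactly the kernel of $\delta_1$ inside $\hat{\mathbf{H}}^l_{smooth}(X)$.

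So I would first pick a representative $a\in\mathcal{E}^k(X)\subset C^0(\mathcal{U},\mathcal{E}^k)$ of $\alpha$ that is a global smooth form. Then $\delta a=0$, hence $Da=da\in\mathcal{E}^{k+1}(X)$; comparing with the spark equation $Da=e-r$ and using $\mathcal{E}^{k+1}(X)\cap C^{k+1}(\mathcal{U},\mathbb{Z})=\{0\}$ forces $e=da$ and, crucially, $r=0$. Next, since $\beta\in\ker\delta_1$, every representative $b$ of $\beta$, with spark equation $Db=f-s$, has $f=\delta_1(\beta)=0$; thus $Db=-s$ with $s\in C^{l+1}(\mathcal{U},\mathbb{Z})$.

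Now Theorem 4.4 gives $\alpha*\beta=[\,a\cup f+(-1)^{k+1}r\cup b\,]$, and since $r=0$ and $f=0$ for the chosen representatives the bracketed cochain vanishes identically, so $\alpha*\beta=0$. All cup products occurring here are wedge products of smooth-form-valued \v{C}ech cochains, hence automatically well-defined in the smooth hyperspark complex, so none of the transversality care of Proposition 3.13 is required.

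There is no substantive obstacle: the only point to watch is that one is entitled to fix the representatives of $\alpha$ and $\beta$ independently, which is legitimate because the product of Theorem 4.4 has already been shown to be independent of the choice of representative of each factor. One could equally run the whole argument in the de Rham--Federer complex using Theorem 3.14, invoking Theorem 4.6 to identify the two products; this changes nothing.
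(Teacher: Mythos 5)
Your argument is correct and is essentially the paper's proof: the paper likewise notes $\hat{\mathbf{H}}^k_{\infty}(X)=\ker\delta_2$ and $H^l(X,\mathbb{R}/\mathbb{Z})=\ker\delta_1$, picks representatives with $Da=e-0$ and $Db=0-s$, and concludes $\alpha\beta=0$ directly from the product formula. Your extra remarks (deriving $r=0$ from a global smooth-form representative, and citing independence of representatives) just make explicit what the paper leaves implicit.
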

\begin{proof}
Note that $H^l(X,\mathbb{R}/\mathbb{Z})=\ker\delta_1$ and
$\hat{\mathbf{H}}^k_{\infty}(X)=\ker\delta_2$. So we can choose representatives
$a$ and $b$ with spark equations $Da=e-0$ and $Db=0-s$. By the product formula
we have $\alpha\beta=0$.
\end{proof}

\section{Smooth Deligne Cohomology}
Deligne cohomology, which was invented by Deligne in 1970's, is closely related to
spark characters and differential characters. In this section,
we introduce ``smooth Deligne cohomology"\cite{Br}, a smooth analog of
Deligne cohomology and establish its relation with spark characters.

\begin{defn}
Let $X$ be a smooth manifold. For $p\geq 0$, the smooth Deligne complex
$\mathbb{Z}_{\mathcal{D}}(p)^{\infty}$ is the complex of sheaves:
$$0\rightarrow\mathbb{Z}\stackrel{i}\rightarrow\mathcal{E}^0\stackrel{d}\rightarrow\mathcal{E}^1
\stackrel{d}\rightarrow\cdots\stackrel{d}\rightarrow\mathcal{E}^{p-1}\rightarrow 0$$
where $\mathcal{E}^k$ denotes the sheaf of real-valued differential $k$-forms on $X$.
The hypercohomology groups $\mathbb{H}^q(X,\mathbb{Z}_{\mathcal{D}}(p)^{\infty})$ are called
the smooth Deligne cohomology groups of $X$, and are denoted by $H^q_{\mathcal{D}}(X,\mathbb{Z}(p)^{\infty})$.
\end{defn}

\begin{ex}
It is easy to see $H^q_{\mathcal{D}}(X,\mathbb{Z}(0)^{\infty})=H^q(X,\mathbb{Z})$ and
$H^q_{\mathcal{D}}(X,\mathbb{Z}(1)^{\infty})=H^{q-1}(X,\mathbb{R}/\mathbb{Z})$.
\end{ex}

There is a cup product \cite{Br} \cite{EV}
$$\cup: \mathbb{Z}_{\mathcal{D}}(p)^{\infty}\otimes \mathbb{Z}_{\mathcal{D}}(p')^{\infty}
\rightarrow \mathbb{Z}_{\mathcal{D}}(p+p')^{\infty}$$ by
$$x\cup y=\left\{
    \begin{array}{ll}
      x\cdot y & \hbox{if } \deg x=0;\\
      x\wedge dy & \hbox{if } \deg x>0 \hbox{ and } \deg y=p';\\
      0 & \hbox{otherwise.}
    \end{array}
  \right.$$
$\cup$ is a morphism of complexes and associative, hence induces a ring structure on
$$\bigoplus_{p,q}H^q_{\mathcal{D}}(X,\mathbb{Z}(p)^{\infty}).$$

We may calculate the smooth Deligne cohomology groups of a manifold $X$ with dimension $n$ by the following
two short exact sequences of complexes of sheaves:
\begin{enumerate}
  \item $0\rightarrow\mathcal{E}^{*<p}[-1]\rightarrow\mathbb{Z}_{\mathcal{D}}(p)^{\infty}
  \rightarrow\mathbb{Z}\rightarrow 0$,
  \item $0\rightarrow\mathcal{E}^{*\geq p}[-p-1]\rightarrow\mathbb{Z}_{\mathcal{D}}(n+1)^{\infty}
  \rightarrow\mathbb{Z}_{\mathcal{D}}(p)^{\infty}\rightarrow0$
\end{enumerate}
where $\mathcal{E}^{*<p}[-1]$ denotes the complex of sheaves
$\mathcal{E}^0\stackrel{d}\rightarrow\mathcal{E}^1\stackrel{d}\rightarrow\cdots
\stackrel{d}\rightarrow\mathcal{E}^{p-1}$ shifted by $1$ position to the right,
and $\mathcal{E}^{*\geq p}[-p-1]$ denotes the complex of sheaves
$\mathcal{E}^p\stackrel{d}\rightarrow\mathcal{E}^{p+1}\stackrel{d}\rightarrow
\cdots\stackrel{d}\rightarrow\mathcal{E}^{n}$ shifted by $p+1$ positions to the right.

It turns out $H^p_{\mathcal{D}}(X,\mathbb{Z}(p)^{\infty})$ is the most
interesting part among all the smooth Deligne cohomology groups.

\begin{thm}
We can put $H^p_{\mathcal{D}}(X,\mathbb{Z}(p)^{\infty})$ into the following
two short exact sequences:
\begin{enumerate}
  \item $0\longrightarrow \mathcal{E}^{p-1}(X)/\mathcal{Z}_0^{p-1}(X)\longrightarrow
      H^p_{\mathcal{D}}(X,\mathbb{Z}(p)^{\infty}) \longrightarrow H^{p}(X,\mathbb{Z})\longrightarrow 0$
  \item $0\longrightarrow H^{p-1}(X,\mathbb{R}/\mathbb{Z})\longrightarrow
      H^p_{\mathcal{D}}(X,\mathbb{Z}(p)^{\infty}) \longrightarrow \mathcal{Z}_0^{p}(X)\longrightarrow 0$
\end{enumerate}
\end{thm}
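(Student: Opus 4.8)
The plan is to obtain both exact sequences directly from the two short exact sequences of complexes of sheaves displayed above, by taking hypercohomology long exact sequences and identifying all terms and connecting maps. Throughout I will use that each $\mathcal{E}^k$ is a soft (indeed fine) sheaf, so for any bounded complex built from the $\mathcal{E}^k$ the hypercohomology is computed by the complex of global sections, and de Rham's theorem identifies the resulting groups with $H^*(X,\mathbb{R})$. I will also use the quasi-isomorphism $\mathbb{Z}_{\mathcal{D}}(n+1)^{\infty}\simeq \mathbb{R}/\mathbb{Z}[-1]$ for $n=\dim X$, which holds because the de Rham complex $\mathcal{E}^0\to\cdots\to\mathcal{E}^n$ resolves $\mathbb{R}$ while $\mathbb{Z}\hookrightarrow\mathbb{R}$; consequently $H^q_{\mathcal{D}}(X,\mathbb{Z}(n+1)^{\infty})\cong H^{q-1}(X,\mathbb{R}/\mathbb{Z})$, consistent with Example 6.2.

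For the first sequence I apply hypercohomology to $0\to\mathcal{E}^{*<p}[-1]\to\mathbb{Z}_{\mathcal{D}}(p)^{\infty}\to\mathbb{Z}\to 0$. Since $\mathcal{E}^{*<p}[-1]$ is the complex $\mathcal{E}^0\to\cdots\to\mathcal{E}^{p-1}$ placed in degrees $1,\dots,p$, its hypercohomology in degree $q$ is $H^{q-1}(X,\mathbb{R})$ for $q<p$, is $\mathcal{E}^{p-1}(X)/d\mathcal{E}^{p-2}(X)$ for $q=p$, and vanishes for $q>p$. The long exact sequence around degree $p$ therefore reads
$$H^{p-1}(X,\mathbb{Z})\xrightarrow{\partial}\mathcal{E}^{p-1}(X)/d\mathcal{E}^{p-2}(X)\to H^p_{\mathcal{D}}(X,\mathbb{Z}(p)^{\infty})\to H^p(X,\mathbb{Z})\to 0,$$
so it remains to compute $\operatorname{coker}\partial$. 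The key point is that $\partial$ factors as the change-of-coefficients map $H^{p-1}(X,\mathbb{Z})\to H^{p-1}(X,\mathbb{R})$ followed by the inclusion $H^{p-1}(X,\mathbb{R})=\mathcal{Z}^{p-1}(X)/d\mathcal{E}^{p-2}(X)\hookrightarrow \mathcal{E}^{p-1}(X)/d\mathcal{E}^{p-2}(X)$; hence $\operatorname{im}\partial=\mathcal{Z}_0^{p-1}(X)/d\mathcal{E}^{p-2}(X)$ and $\operatorname{coker}\partial=\mathcal{E}^{p-1}(X)/\mathcal{Z}_0^{p-1}(X)$, yielding sequence (1).

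For the second sequence I apply hypercohomology to $0\to\mathcal{E}^{*\geq p}[-p-1]\to\mathbb{Z}_{\mathcal{D}}(n+1)^{\infty}\to\mathbb{Z}_{\mathcal{D}}(p)^{\infty}\to 0$. The complex $\mathcal{E}^{*\geq p}[-p-1]$ is $\mathcal{E}^p\to\cdots\to\mathcal{E}^n$ placed so that $\mathcal{E}^p$ sits in degree $p+1$; its hypercohomology vanishes in degrees $\le p$, equals $\mathcal{Z}^p(X)$ (the closed $p$-forms) in degree $p+1$, and equals $H^{p+j}(X,\mathbb{R})$ in degree $p+1+j$ for $j\ge 1$. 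Using $H^q_{\mathcal{D}}(X,\mathbb{Z}(n+1)^{\infty})\cong H^{q-1}(X,\mathbb{R}/\mathbb{Z})$, the long exact sequence around degree $p$ becomes
$$0\to H^{p-1}(X,\mathbb{R}/\mathbb{Z})\to H^p_{\mathcal{D}}(X,\mathbb{Z}(p)^{\infty})\xrightarrow{\rho}\mathcal{Z}^p(X)\xrightarrow{\partial'}H^p(X,\mathbb{R}/\mathbb{Z}),$$
so $H^p_{\mathcal{D}}(X,\mathbb{Z}(p)^{\infty})$ is an extension of $\operatorname{im}\rho=\ker\partial'$ by $H^{p-1}(X,\mathbb{R}/\mathbb{Z})$. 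Here $\partial'$ is induced by the inclusion $\mathcal{E}^{*\geq p}[-p-1]\hookrightarrow\mathbb{Z}_{\mathcal{D}}(n+1)^{\infty}$, and under the quasi-isomorphism above it sends a closed $p$-form $\omega$ to the image of its de Rham class under $H^p(X,\mathbb{R})\to H^p(X,\mathbb{R}/\mathbb{Z})$. Since the kernel of this reduction map is exactly the image of $H^p(X,\mathbb{Z})$, we get $\ker\partial'=\mathcal{Z}_0^p(X)$, giving sequence (2).

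The main obstacle is precisely pinning down the two connecting homomorphisms $\partial$ and $\partial'$: one must verify that each factors through de Rham cohomology as claimed and identify its image, which requires a careful cocycle-level argument — most cleanly carried out with the \v{C}ech resolution of the Deligne complex used in Section 6, so the representatives constructed there can be reused. The remaining ingredients — softness of the $\mathcal{E}^k$, exactness of the long exact sequences, the quasi-isomorphism $\mathbb{Z}_{\mathcal{D}}(n+1)^{\infty}\simeq \mathbb{R}/\mathbb{Z}[-1]$, and the degree bookkeeping — are routine.
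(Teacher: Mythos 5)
Your proposal is correct and follows essentially the same route as the paper: both short exact sequences are extracted from the hypercohomology long exact sequences of the same two short exact sequences of complexes of sheaves, with the same identification of the end terms via softness of the $\mathcal{E}^k$ and the quasi-isomorphism $\mathbb{Z}_{\mathcal{D}}(n+1)^{\infty}\simeq\mathbb{R}/\mathbb{Z}[-1]$, and the same factorizations of the relevant maps through $H^{p-1}(X,\mathbb{R})$ and through $H^{p}(X,\mathbb{R})\rightarrow H^{p}(X,\mathbb{R}/\mathbb{Z})$. The only (cosmetic) difference is that the paper identifies the maps you call $\partial$ and $\partial'$ simply as the maps induced by the morphisms of complexes $\mathbb{Z}\rightarrow\mathcal{E}^{*<p}$ and $\mathcal{E}^{*\geq p}[-p-1]\rightarrow\mathbb{Z}_{\mathcal{D}}(n+1)^{\infty}$, so no further cocycle-level verification is deemed necessary.
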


\begin{proof}
(1) From the short exact sequence  $0\rightarrow\mathcal{E}^{*<p}[-1]\rightarrow\mathbb{Z}_{\mathcal{D}}(p)^{\infty}
  \rightarrow\mathbb{Z}\rightarrow 0$, we get the long exact sequence of hypercohomology:
  $$\cdots\rightarrow \mathbb{H}^{p-1}(\mathbb{Z}) \rightarrow \mathbb{H}^p(\mathcal{E}^{*<p}[-1])
  \rightarrow \mathbb{H}^p(\mathbb{Z}_{\mathcal{D}}(p)^{\infty}) \rightarrow
  \mathbb{H}^p(\mathbb{Z}) \rightarrow \mathbb{H}^{p+1} (\mathcal{E}^{*<p}[-1])\rightarrow\cdots.$$

  First, we have $\mathbb{H}^p(\mathbb{Z})=H^p(X,\mathbb{Z})$. Since the sheaf $\mathcal{E}^k$ is soft
  for every $k$, it is easy to see
  $\mathbb{H}^p(\mathcal{E}^{*<p}[-1])=\mathbb{H}^{p-1}(\mathcal{E}^{*<p})=
  \mathcal{E}^{p-1}(X)/d\mathcal{E}^{p-2}(X)$ and
  $\mathbb{H}^{p+1}(\mathcal{E}^{*<p}[-1])=\mathbb{H}^{p}(\mathcal{E}^{*<p})=0$. And
  $\mathbb{H}^p(\mathbb{Z}_{\mathcal{D}}(p)^{\infty})=H^p_{\mathcal{D}}(X,\mathbb{Z}(p)^{\infty})$
  by notation. So we have
  $$\cdots\rightarrow H^{p-1}(X,\mathbb{Z}) \rightarrow \mathcal{E}^{p-1}(X)/d\mathcal{E}^{p-2}(X)
  \rightarrow H^p_{\mathcal{D}}(X,\mathbb{Z}(p)^{\infty}) \rightarrow
  H^p(X,\mathbb{Z}) \rightarrow 0.$$
  Note that the map $H^{p-1}(X,\mathbb{Z}) \rightarrow \mathcal{E}^{p-1}(X)/d\mathcal{E}^{p-2}(X)$
  is induced by morphism of complexes of sheaves $i:\mathbb{Z}\rightarrow \mathcal{E}^{*<p}$
  which is composition of $i:\mathbb{Z}\rightarrow \mathcal{E}^{*}$ and projection
  $p:\mathcal{E}^{*}\rightarrow \mathcal{E}^{*<p}$. Hence
  $H^{p-1}(X,\mathbb{Z}) \rightarrow \mathcal{E}^{p-1}(X)/d\mathcal{E}^{p-2}(X)$ factors through
  $\mathbb{H}^{p-1}(\mathcal{E}^*)=H^{p-1}(X,\mathbb{R})$, and the image is
  $\mathcal{Z}_0^{p-1}(X)/d\mathcal{E}^{p-2}(X)$. Finally, we get the short exact sequence
  $$0\longrightarrow \mathcal{E}^{p-1}(X)/\mathcal{Z}_0^{p-1}(X)\longrightarrow
  H^p_{\mathcal{D}}(X,\mathbb{Z}(p)^{\infty}) \longrightarrow H^{p}(X,\mathbb{Z})\longrightarrow 0.$$

(2) From the short exact sequence  $0\rightarrow\mathcal{E}^{*\geq p}[-p-1]\rightarrow\mathbb{Z}_ {\mathcal{D}}(n+1)^{\infty} \rightarrow\mathbb{Z}_{\mathcal{D}}(p)^{\infty}\rightarrow0$,
 we get the long exact sequence of hypercohomology:
  $$\cdots\rightarrow \mathbb{H}^{p}(\mathcal{E}^{*\geq p}[-p-1]) \rightarrow
  \mathbb{H}^p(\mathbb{Z}_ {\mathcal{D}}(n+1)^{\infty})
  \rightarrow \mathbb{H}^p(\mathbb{Z}_{\mathcal{D}}(p)^{\infty}) \rightarrow $$
  $$\mathbb{H}^{p+1}(\mathcal{E}^{*\geq p}[-p-1]) \rightarrow
  \mathbb{H}^{p+1} (\mathbb{Z}_ {\mathcal{D}}(n+1)^{\infty})\rightarrow\cdots.$$

The complex of sheaves $\mathbb{Z}_ {\mathcal{D}}(n+1)^{\infty}$ is quasi-isomorphic to
$\mathbb{R}/\mathbb{Z}[-1]$, so $\mathbb{H}^p(\mathbb{Z}_ {\mathcal{D}}(n+1)^{\infty})=
H^{p-1}(X,\mathbb{R}/\mathbb{Z})$. Also, it is easy to see
$\mathbb{H}^{p+1}(\mathcal{E}^{*\geq p}[-p-1])=\mathbb{H}^{0}(\mathcal{E}^{*\geq p})=
\mathcal{Z}^p(X)$, and $\mathbb{H}^{p}(\mathcal{E}^{*\geq p}[-p-1])=0$. Thus, we get
$$0\rightarrow H^{p-1}(X,\mathbb{R}/\mathbb{Z})\rightarrow H^p_{\mathcal{D}}(X,\mathbb{Z}(p)^{\infty})
\rightarrow \mathcal{Z}^p(X)\rightarrow H^{p}(X,\mathbb{R}/\mathbb{Z})\rightarrow \cdots.$$

To complete our proof, we have to determine the kernel of the map
$\mathcal{Z}^p(X)\rightarrow H^{p}(X,\mathbb{R}/\mathbb{Z})$. Note this map is induced by
$i: \mathcal{E}^{*\geq p}[-p-1]\rightarrow\mathbb{Z}_ {\mathcal{D}}(n+1)^{\infty}$, which
is composition of $i: \mathcal{E}^{*\geq p}[-p-1]\rightarrow\mathcal{E}^{*}[-1]$
and $i: \mathcal{E}^{*}[-1]\rightarrow\mathbb{Z}_ {\mathcal{D}}(n+1)^{\infty}$. Thereafter,
the map $\mathcal{Z}^p(X)\rightarrow H^{p}(X,\mathbb{R}/\mathbb{Z})$ is composition of
$\mathcal{Z}^p(X)\rightarrow H^{p}(X,\mathbb{R})$ and
$H^{p}(X,\mathbb{R})\rightarrow H^{p}(X,\mathbb{R}/\mathbb{Z})$, and it is easy to
see the kernel is $\mathcal{Z}_0^{p}(X)$.

\end{proof}

\begin{rmk}
By similar calculations, it is easy to determine other part of the smooth Deligne cohomology groups:
$$H^q_{\mathcal{D}}(X,\mathbb{Z}(p)^{\infty})= \left\{
  \begin{array}{ll}
    H^{q-1}(X,\mathbb{R}/\mathbb{Z}), & \hbox{when } (q < p); \\
    H^q(X,\mathbb{Z}), & \hbox{when } (q > p).
  \end{array}
\right.$$

\end{rmk}

We see the $(p,p)$-part of smooth Deligne cohomology satisfy the same
short exact sequences with spark characters (Proposition 3.4).
It is not surprising we have the isomorphism:

\begin{thm}
$$H^p_{\mathcal{D}}(X,\mathbb{Z}(p)^{\infty})\cong\hat{\mathbf{H}}^{p-1}(X).$$
\end{thm}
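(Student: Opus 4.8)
The plan is to build an explicit isomorphism at the cochain level using the \v{C}ech resolution of the smooth Deligne complex $\mathbb{Z}_{\mathcal{D}}(p)^{\infty}$ with respect to the good cover $\mathcal{U}$, and to identify the resulting total complex with (a shift of) the smooth hyperspark complex. Concretely, for a good cover $\mathcal{U}$ the hypercohomology $H^p_{\mathcal{D}}(X,\mathbb{Z}(p)^{\infty})$ is computed by the total complex of the \v{C}ech double complex $C^r(\mathcal{U},\mathbb{Z}_{\mathcal{D}}(p)^{\infty})$; a degree-$p$ total cocycle consists of data $(n, \eta^0, \eta^1,\dots,\eta^{p-1})$ where $n\in C^p(\mathcal{U},\mathbb{Z})$, $\eta^j \in C^{p-1-j}(\mathcal{U},\mathcal{E}^j)$ for $0\le j\le p-1$, subject to the cocycle conditions coming from $D=\delta\pm d$ together with the boundary relation $i(n)=\pm\,\delta\eta^{p-1}$ (the place where the copy of $\mathbb{Z}$ meets $\mathcal{E}^0$). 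I would first write these conditions out explicitly and observe that, after assembling $A = \eta^0+\eta^1+\cdots+\eta^{p-2}\in\bigoplus_{r+q=p-2}C^r(\mathcal{U},\mathcal{E}^q)$ and setting $r = \pm n$, the Deligne cocycle condition becomes exactly the smooth hyperspark equation $DA = e - r$ of degree $p-1$, where $e = d\eta^0\in\mathcal{E}^p(X)$ is forced to be a \emph{global} closed $p$-form with integral periods (the truncation $\mathcal{E}^{<p}$ is what forces $\eta^{p-1}$ to land, after $\delta$, in integral \v{C}ech cochains, hence $e$ has integral periods). Coboundaries in the Deligne total complex then translate precisely into the smooth hyperspark equivalence relation $A - A' = DB + s$ with $s\in C^{p-1}(\mathcal{U},\mathbb{Z})$.

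The key steps, in order, are: (i) fix the good cover $\mathcal{U}$ and write down the \v{C}ech--Deligne total complex and its degree-$p$ cocycles and coboundaries explicitly; (ii) define the map $\Phi$ sending a Deligne cocycle $(n,\eta^0,\dots,\eta^{p-1})$ to the smooth hyperspark $A=\sum_{j=0}^{p-2}\eta^j$ with spark data $e=d\eta^0$, $r=-n$, and check $\Phi$ is well defined on the level of classes; (iii) construct the inverse: given a smooth hyperspark $A$ of degree $p-1$ with $DA=e-r$, decompose $A=\sum_{r+q=p-1,\,q\le p-2}A^{r,q}$ by bidegree (note the bidegree $(0,p-1)$ component can be absorbed/adjusted since $e$ is global), reindex to produce the $\eta^j$'s and set $n=-r$, and verify the Deligne cocycle conditions hold; (iv) check the two constructions are mutually inverse on cohomology, using that both complexes are resolutions computing the same thing via the good cover (alternatively, invoke the $3\times 3$ grid of Proposition 2.4: the maps on the outer terms $H^p(X,\mathbb{Z})$, $H^{p-1}(X,\mathbb{R}/\mathbb{Z})$, $\mathcal{Z}_0^p(X)$, $\mathcal{E}^{p-1}(X)/\mathcal{Z}_0^{p-1}(X)$ are the obvious identity maps, so the five lemma forces $\Phi$ to be an isomorphism).

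The cleanest packaging is step (iv) via the comparison of short exact sequences: Theorem~6.4 exhibits $H^p_{\mathcal{D}}(X,\mathbb{Z}(p)^{\infty})$ inside the same two short exact sequences (with the same outer terms $\mathcal{E}^{p-1}(X)/\mathcal{Z}_0^{p-1}(X)$, $H^p(X,\mathbb{Z})$, $H^{p-1}(X,\mathbb{R}/\mathbb{Z})$, $\mathcal{Z}_0^p(X)$) that Proposition~3.4 gives for $\hat{\mathbf{H}}^{p-1}(X)$; so once I check that $\Phi$ is compatible with the two projections ($\Phi$ followed by "class of $n$ in $H^p(X,\mathbb{Z})$" equals $\delta_2$, and $\Phi$ followed by "$d\eta^0\in\mathcal{Z}_0^p(X)$" equals $\delta_1$) and with the two inclusions, a diagram chase / five-lemma argument finishes it. The main obstacle I anticipate is bookkeeping: getting the signs and the reindexing between the Deligne grading (where the $\mathbb{Z}$ sits in degree $0$ and $\mathcal{E}^j$ in degree $j+1$, shifted) and the hyperspark bidegree $(r,q)$ exactly right, and handling the boundary term $\eta^{p-1}\leftrightarrow$ the integral cochain $r$ so that the integrality of the periods of $e$ comes out correctly. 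None of this is deep, but it is where an error would most naturally creep in, so I would do this identification carefully and in full before invoking the five lemma.
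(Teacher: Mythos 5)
Your overall route --- resolve $\mathbb{Z}_{\mathcal{D}}(p)^{\infty}$ by \v{C}ech cochains on a good cover, identify degree-$p$ total cocycles with smooth hypersparks of degree $p-1$ and total coboundaries with spark equivalence, with a five-lemma comparison of the two short exact sequences as a fallback --- is exactly the paper's strategy (the paper does the direct cocycle/coboundary identification and never needs the five lemma). But the explicit dictionary you propose is wrong at its central step. Every form component $\eta^j$ of a Deligne $p$-cocycle has \v{C}ech-plus-form degree $p-1$, so any sum of them lies in $\bigoplus_{r+q=p-1}C^r(\mathcal{U},\mathcal{E}^q)$, never in $\bigoplus_{r+q=p-2}$; and the correct hyperspark is the sum of \emph{all} $p$ form components, $a=\eta^0+\cdots+\eta^{p-1}$, not a truncation. (Your indexing is also internally inconsistent: with $\eta^j\in C^{p-1-j}(\mathcal{U},\mathcal{E}^j)$ as stated, $d\eta^0\in C^{p-1}(\mathcal{U},\mathcal{E}^1)$ is not a global $p$-form, and the relation with $\mathbb{Z}$ involves $\delta\eta^0$, not $\delta\eta^{p-1}$.) More seriously, the bidegree-$(0,p-1)$ component cannot be ``absorbed/adjusted'': the spark equation forces $e=\pm\,da^{0,p-1}$, so a representative with that component zero exists only when $\delta_1(\alpha)=0$ (in degree $1$ it is literally the connection forms). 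If you drop it, or drop the $(p-1,0)$ component as in your first paragraph, the total differential of the truncated $A$ is not of the form $e-r$ with $e$ a global form and $r\in C^{p}(\mathcal{U},\mathbb{Z})$; and if instead you take your degree count seriously you are building a spark of degree $p-2$, landing in $\hat{\mathbf{H}}^{p-2}(X)$ rather than $\hat{\mathbf{H}}^{p-1}(X)$. Either way the claimed equivalence ``Deligne cocycle condition $=$ spark equation'' fails as written.

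The repair is precisely the paper's dictionary, and it is a bijection of components ($p+1$ on each side) with nothing discarded: write a degree-$p$ cocycle as $\tilde a=n+a$ with $n\in C^p(\mathcal{U},\mathbb{Z})$ and $a=\sum_{i=0}^{p-1}a^{i,p-1-i}$, $a^{i,p-1-i}\in C^i(\mathcal{U},\mathcal{E}^{p-1-i})$. Then $D_p\tilde a=0$ is equivalent to $Da=da^{0,p-1}-(-1)^p n$, where $da^{0,p-1}$ is automatically global because $\delta(da^{0,p-1})=d(\delta a^{0,p-1})=d(da^{1,p-2})=0$; and $\tilde a-\tilde a'\in \mathrm{Im}\,D_p$, say $D_p(m+b)$ with $m\in C^{p-1}(\mathcal{U},\mathbb{Z})$, translates into $a-a'=Db+s$ with $s\in C^{p-1}(\mathcal{U},\mathbb{Z})$. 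Hence $[\tilde a]\mapsto[a]$ is already an isomorphism $H^p(M^*_p)\cong\hat{\mathbf{H}}^{p-1}_{smooth}(X)$; your five-lemma packaging is fine as a check once $\Phi$ is correctly defined and shown compatible with $\delta_1$ and $\delta_2$, but it is not needed.
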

\begin{proof}
It suffices to show the isomorphism
$H^p_{\mathcal{D}}(X,\mathbb{Z}(p)^{\infty})\cong\hat{\mathbf{H}}^{p-1}_{smooth}(X)$.

Step 1: Choose a good cover $\{\mathcal{U}\}$ of $X$ and
take \v{C}ech resolution for the complex of sheaves
$\mathbb{Z}_{\mathcal{D}}(p)^{\infty}\longrightarrow
\mathcal{C}^*(\mathcal{U},\mathbb{Z}_{\mathcal{D}}(p)^{\infty})$.

Then
$$H^q_{\mathcal{D}}(X,\mathbb{Z}(p)^{\infty})\equiv \mathbb{H}^q(\mathbb{Z}_{\mathcal{D}}(p)^{\infty})
\cong \mathbb{H}^q(Tot(\mathcal{C}^*(\mathcal{U},\mathbb{Z}_{\mathcal{D}}(p)^{\infty})))
\cong H^q(Tot(C^*(\mathcal{U},\mathbb{Z}_{\mathcal{D}}(p)^{\infty})))$$
where $C^*(\mathcal{U},\mathbb{Z}_{\mathcal{D}}(p)^{\infty})$ are the groups of global sections of
sheaves $\mathcal{C}^*(\mathcal{U},\mathbb{Z}_{\mathcal{D}}(p)^{\infty})$ and look like the
following double complex.

\xymatrix{
\vdots & \vdots  & \vdots  & \vdots  &  & \vdots    \\
C^p(\mathcal{U},\mathbb{Z}) \ar[r]^{(-1)^{p}i} \ar[u]^{\delta}
& C^p(\mathcal{U},\mathcal{E}^0) \ar[r]^{(-1)^{p}d} \ar[u]^{\delta}
& C^p(\mathcal{U},\mathcal{E}^1) \ar[r]^{(-1)^{p}d} \ar[u]^{\delta} &
C^p(\mathcal{U},\mathcal{E}^2)  \ar[r]^{(-1)^{p}d} \ar[u]^{\delta} & \cdots \ar[r]^{(-1)^{p}d} &
C^p(\mathcal{U},\mathcal{E}^{p-1})  \ar[u]^{\delta}   \\
\vdots  \ar[u]^{\delta}
& \vdots  \ar[u]^{\delta}
& \vdots \ar[u]^{\delta} &
\vdots \ar[u]^{\delta} &  &
\vdots \ar[u]^{\delta}   \\
C^2(\mathcal{U},\mathbb{Z}) \ar[r]^i \ar[u]^{\delta}
& C^2(\mathcal{U},\mathcal{E}^0) \ar[r]^{d} \ar[u]^{\delta}
& C^2(\mathcal{U},\mathcal{E}^1) \ar[r]^{d} \ar[u]^{\delta} &
C^2(\mathcal{U},\mathcal{E}^2)  \ar[r]^d \ar[u]^{\delta} & \cdots \ar[r]^d &
C^2(\mathcal{U},\mathcal{E}^{p-1})  \ar[u]^{\delta}   \\
C^1(\mathcal{U},\mathbb{Z}) \ar[r]^{-i} \ar[u]^{\delta} & C^1(\mathcal{U},\mathcal{E}^0) \ar[r]^{-d}
\ar[u]^{\delta} & C^1(\mathcal{U},\mathcal{E}^1) \ar[r]^{-d} \ar[u]^{\delta} &
C^1(\mathcal{U},\mathcal{E}^2) \ar[r]^{-d} \ar[u]^{\delta} & \cdots \ar[r]^{-d} &
C^1(\mathcal{U},\mathcal{E}^{p-1})  \ar[u]^{\delta}   \\
C^0(\mathcal{U},\mathbb{Z}) \ar[r]^i \ar[u]^{\delta} & C^0(\mathcal{U},\mathcal{E}^0) \ar[r]^{d}
\ar[u]^{\delta} & C^0(\mathcal{U},\mathcal{E}^1) \ar[r]^{d} \ar[u]^{\delta} &
C^0(\mathcal{U},\mathcal{E}^2) \ar[r]^d \ar[u]^{\delta} & \cdots \ar[r]^d &
C^0(\mathcal{U},\mathcal{E}^{p-1})  \ar[u]^{\delta} \\}

\vspace{0.5cm}
Step 2:

Let $M^*_p \equiv Tot(C^*(\mathcal{U},\mathbb{Z}_{\mathcal{D}}(p)^{\infty}))$
denote the total complexes of the double complex
$C^*(\mathcal{U},\mathbb{Z}_{\mathcal{D}}(p)^{\infty})$ with differential
$$D_p(a)=\left\{
                     \begin{array}{ll}
                       (\delta+(-1)^ri)(a), & \hbox{when}\quad a\in C^r(\mathcal{U},\mathbb{Z});\\
                       (\delta+(-1)^rd)(a), & \hbox{when}\quad a\in C^r(\mathcal{U},\mathcal{E}^j),j<p-1;\\
                       \delta a, & \hbox{when}\quad a\in C^r(\mathcal{U},\mathcal{E}^{p-1}).
                     \end{array}
                   \right.
$$

Now, we show $H^p(M^*_p)\cong \hat{\mathbf{H}}^{p-1}_{smooth}(X)$.

Let $\tilde{a}=r+a=r+\Sigma_{i=0}^{p-1}a^{i,p-1-i}\in M^p_p$ where
$r\in C^p(\mathcal{U},\mathbb{Z})$ and $a^{i,p-1-i}\in C^i(\mathcal{U},\mathcal{E}^{p-1-i})$.
We define a map $H^p(M^*_p)\longrightarrow \hat{\mathbf{H}}^{p-1}_{smooth}(X)$ which
maps $[\tilde{a}]\mapsto[a]$ for $\tilde{a}\in\ker D_p$.

$$\tilde{a}\in \ker D_p \Leftrightarrow D_p \tilde{a}=0
\Leftrightarrow D_pa+(-1)^p i(r)=0 \text{ and } \delta r=0$$

$$\Leftrightarrow Da=D_pa+da^{0,p-1}=da^{0,p-1}-(-1)^p r.$$
Note that $$\delta a^{0,p-1}-da^{1,p-2}=0\quad\Rightarrow \quad
\delta da^{0,p-1}=d\delta a^{0,p-1}=dda^{1,p-2}=0$$

$$\Rightarrow \quad da^{0,p-1}\in \mathcal{E}^p(X)=\ker\delta:
C^0(\mathcal{U},\mathcal{E}^{p})\rightarrow C^1(\mathcal{U},\mathcal{E}^{p}).$$

Therefore, $\tilde{a}\in \ker D_p$ implies $a$ is a smooth hyperspark of degree $p-1$. On the other hand,
if $a$ is a smooth hyperspark with spark equation $Da=e-r$ with
$e\in \mathcal{E}^p(X)$ and $r\in C^p(\mathcal{U},\mathbb{Z})$, it is clear to see
$\tilde{a}\equiv(-1)^pr+a \in \ker D_p$.

Moreover, it is easy to see $\tilde{a}'=r'+a'\in \ker D_p$ with
$\tilde{a}-\tilde{a}'\in Im D_p$ if and only if $a$ and $a'$ represent
the same spark class.

Hence, the map $[\tilde{a}]\rightarrow [a]$ gives an isomorphism
$H^p(M^*_p)\cong \hat{\mathbf{H}}^{p-1}_{smooth}(X)$.

\end{proof}

It is shown in \cite{HLZ} \cite{HL1} that there is a natural isomorphism
$\hat{\mathbf{H}}^{p-1}(X)\cong\hat{H}^{p-1}(X)$, so
we get \cite[Proposition 1.5.7.]{Br} as a corollary.

\begin{cor}
$$H^p_{\mathcal{D}}(X,\mathbb{Z}(p)^{\infty})\cong\hat{H}^{p-1}(X).$$
\end{cor}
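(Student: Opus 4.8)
The plan is to deduce this corollary by composition, since the substantive work has already been done in Theorem 6.5. That theorem furnishes an explicit isomorphism
$$H^p_{\mathcal{D}}(X,\mathbb{Z}(p)^{\infty})\cong\hat{\mathbf{H}}^{p-1}(X)$$
of the $(p,p)$-part of smooth Deligne cohomology with the Harvey--Lawson spark characters of degree $p-1$, obtained at the cochain level from the \v{C}ech resolution of $\mathbb{Z}_{\mathcal{D}}(p)^{\infty}$ via the smooth hyperspark model and the map $[r+a]\mapsto[a]$. On the other hand, Theorem 3.18 (Harvey--Lawson--Zweck) supplies the natural isomorphism $\hat{\mathbf{H}}^{p-1}(X)\cong\hat{H}^{p-1}(X)$ identifying spark characters with Cheeger--Simons differential characters. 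Stringing the two together gives precisely $H^p_{\mathcal{D}}(X,\mathbb{Z}(p)^{\infty})\cong\hat{H}^{p-1}(X)$, recovering \cite[Proposition 1.5.7]{Br}.

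The one bookkeeping point worth recording is that ``spark characters of degree $p-1$'' refers to the common value $\hat{\mathbf{H}}^{p-1}(X)$ of the canonically isomorphic groups $\hat{\mathbf{H}}^{p-1}_{smooth}(X)\cong\hat{\mathbf{H}}^{p-1}_{hyperspark}(X)\cong\hat{\mathbf{H}}^{p-1}_{spark}(X)$ of Propositions 3.8 and 3.11 and Corollary 3.12, so that Theorem 6.5 (phrased via the smooth model) and Theorem 3.18 (phrased via the de Rham--Federer model) really name the same object. Because every isomorphism in this chain is induced by a (quasi-)morphism of spark complexes, each is compatible with the $3\times3$ grids of Proposition 2.4; hence the composite isomorphism automatically intertwines the structural data $\delta_1$, $\delta_2$, and in particular matches the two-step filtration of $H^p_{\mathcal{D}}(X,\mathbb{Z}(p)^{\infty})$ recorded in Theorem 6.3 with the corresponding filtration of $\hat{H}^{p-1}(X)$.

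There is essentially no obstacle here: each link is a previously established natural isomorphism, so the corollary is formal once Theorem 6.5 is in place, and I expect the ``hard part'' to be nothing more than pointing to the right earlier results. If one wished to avoid routing through the Cheeger--Simons comparison of \cite{HLZ}\cite{HL1}, one could alternatively run the five lemma comparing the two short exact sequences of Theorem 6.3 with the outer rows and columns of the de Rham--Federer $3\times3$ grid of Proposition 3.4 (whose terms are $H^{p-1}(X,\mathbb{R}/\mathbb{Z})$, $\mathcal{Z}_0^{p}(X)$, $\mathcal{E}^{p-1}(X)/\mathcal{Z}_0^{p-1}(X)$, $H^p(X,\mathbb{Z})$, etc.); but that merely reproves part of \cite{HLZ}, so the composition route is cleaner. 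Finally one should note that $\hat{H}^{p-1}(X)$ is only defined for $p\geq1$, and the statement is understood in that range; for $p=0$ the analogue is the trivial identification $H^0_{\mathcal{D}}(X,\mathbb{Z}(0)^{\infty})=H^0(X,\mathbb{Z})$ of Example 6.2.
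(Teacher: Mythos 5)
Your proposal matches the paper's own argument: the corollary is obtained exactly by composing the isomorphism $H^p_{\mathcal{D}}(X,\mathbb{Z}(p)^{\infty})\cong\hat{\mathbf{H}}^{p-1}(X)$ of Theorem 6.5 with the Harvey--Lawson--Zweck identification $\hat{\mathbf{H}}^{p-1}(X)\cong\hat{H}^{p-1}(X)$ of Theorem 3.18, recovering \cite[Proposition 1.5.7]{Br}. Your additional remarks on the equivalence of the spark models and the range $p\geq 1$ are harmless bookkeeping and do not change the route.
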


In fact, $\bigoplus_{p}H^p_{\mathcal{D}}(X,\mathbb{Z}(p)^{\infty})\subset
\bigoplus_{p,q}H^q_{\mathcal{D}}(X,\mathbb{Z}(p)^{\infty})$ is a subring, where the product coincides
with the products on spark characters and differential character, i.e. we have the following ring
isomorphism:
\begin{thm}
$$H^*_{\mathcal{D}}(X,\mathbb{Z}(*)^{\infty})\cong\hat{\mathbf{H}}^{*}(X)\cong\hat{H}^{*}(X).$$
\end{thm}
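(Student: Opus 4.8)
The plan is to upgrade the group isomorphism $\Phi\colon H^p_{\mathcal D}(X,\mathbb Z(p)^\infty)\xrightarrow{\ \cong\ }\hat{\mathbf H}^{p-1}_{smooth}(X)$ constructed in Theorem 6.5 to a ring isomorphism, and then combine it with the ring isomorphisms $\hat{\mathbf H}^*_{smooth}(X)\cong\hat{\mathbf H}^*_{spark}(X)\cong\hat H^*(X)$ established in Corollary 3.7 and Theorem 3.18. The only genuine content is compatibility of $\Phi$ with products, since on the left we have the Beilinson--Deligne cup product $\cup$ from $\mathbb Z_{\mathcal D}(p)^\infty\otimes\mathbb Z_{\mathcal D}(p')^\infty\to\mathbb Z_{\mathcal D}(p+p')^\infty$ (extended \v{C}ech-wise on $M^*_p\otimes M^*_{p'}\to M^*_{p+p'}$) and on the right the product of Theorem 4.4 on smooth hypersparks. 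So first I would fix notation: write a cohomology class in $H^p_{\mathcal D}(X,\mathbb Z(p)^\infty)$ by a cocycle $\tilde a=(-1)^p r+a\in M^p_p$ with $a=\sum_{i}a^{i,p-1-i}$ a smooth hyperspark, $r\in C^p(\mathcal U,\mathbb Z)$, $Da=e-r$, $e\in\mathcal E^p(X)$; similarly $\tilde b=(-1)^{p'}s+b$, $Db=f-s$.

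Next I would write out the \v{C}ech--Deligne product of $\tilde a$ and $\tilde b$ explicitly. Recall the Deligne product is $x\cup y=x\cdot y$ if $\deg x=0$, $x\cup y=x\wedge dy$ if $\deg x>0$ and $\deg y=p'-1$ (top degree of the truncated de Rham part), and $0$ otherwise; combined with the \v{C}ech cup product $(\ \cdot\ )_{i_0\dots i_{r+s}}=(\ )_{i_0\dots i_r}\,(\ )_{i_r\dots i_{r+s}}$ and the sign twist $(-1)^{js}$ already used in Proposition 4.3. Applied to $\tilde a\cup\tilde b$, the $\mathbb Z$-component $r$ of $\tilde a$ pairs (via the $\deg=0$ clause) against all of $\tilde b$, while the smooth part $a$ pairs (via the $\deg>0,\ \deg y=\text{top}$ clause) only against the top piece $b^{0,p'-1}$, picking up $da$ against $db^{0,p'-1}$; since $Db=f-s$ we have $\sum_i db^{i,p'-1-i}=f-\dots$, and the relevant term is $a\wedge db^{0,p'-1}$, which modulo the recognition step of Theorem 6.5 represents the class $a\cup f$. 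Assembling, the cocycle $\Phi(\tilde a\cup\tilde b)$ is (up to sign) $(-1)^{p+p'} r\cup b + (-1)^{?}\, a\cup f$ sitting in $M^{p+p'}_{p+p'}$, and by the recognition procedure in Step 2 of the proof of Theorem 6.5 this maps to the smooth hyperspark $a\cup f+(-1)^p r\cup b$. Matching this against the product formula $\alpha*\beta=[a\cup f+(-1)^{k+1}r\cup b]$ of Theorem 4.4 (with $k=p-1$, so $(-1)^{k+1}=(-1)^p$) shows $\Phi$ is multiplicative. I would carry the sign bookkeeping carefully but present only the final matching; the signs are forced since both products specialize correctly on $\hat{\mathbf H}^0$ and on $H^*(X,\mathbb Z)$ via $\delta_2$.

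I would then note that well-definedness of the product on cohomology (descent, independence of cover refinement, associativity, graded commutativity) is already available: on the Deligne side because $\cup$ is a morphism of complexes and associative (as recalled after Example 6.2), and on the hyperspark side because of Theorem 4.4 and Theorem 4.5; hence the bijection $\Phi$ transports one to the other once multiplicativity on representatives is checked. Graded commutativity of the hyperspark product, left unproved in Theorem 4.4, then follows from graded commutativity on the Deligne side, which is the route the paper already announced. Finally I would chain the isomorphisms: $H^*_{\mathcal D}(X,\mathbb Z(*)^\infty)\cong\hat{\mathbf H}^*_{smooth}(X)\cong\hat{\mathbf H}^*_{spark}(X)$ as rings (the last by Theorem 4.5), and $\hat{\mathbf H}^*_{spark}(X)\cong\hat H^*(X)$ as rings by the Harvey--Lawson--Zweck theorem cited in Theorem 3.18, and that $\bigoplus_p H^p_{\mathcal D}(X,\mathbb Z(p)^\infty)$ is closed under $\cup$ since $p+p'$ lands in the $(p+p',p+p')$ slot, giving the subring statement.

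The main obstacle is the sign and indexing reconciliation between the \v{C}ech--Deligne cup product (with Brylinski/Esnault--Viehweg conventions, the $(-1)^{js}$ twist of Proposition 4.3, and the sign patterns $(-1)^r$, $(-1)^p$ appearing in the differential $D_p$ and in the vertical maps of the \v{C}ech resolution double complex in Step 1 of Theorem 6.5) and the hyperspark product $a\cup f+(-1)^{k+1}r\cup b$. Concretely, one must verify that the Deligne product's "$x\wedge dy$ when $\deg y=p'$" clause, after \v{C}ech resolution and passage through the recognition isomorphism $[\tilde a]\mapsto[a]$, produces exactly the $a\cup f$ term and not, say, $(-1)^{?}e\cup b$, and that the $\mathbb Z$-component contributes precisely $(-1)^p r\cup b$; a convenient consistency check is to evaluate both sides under $\delta_1$ and $\delta_2$ (which are ring maps on each side) and on the explicitly computed examples of Section 5, e.g. $\hat{\mathbf H}^0_{smooth}(S^1)^{\otimes2}\to\hat{\mathbf H}^1_{smooth}(S^1)$. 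Everything else is formal transport of structure along a known bijection.
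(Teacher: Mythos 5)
Your proposal is correct and follows essentially the same route as the paper: represent Deligne classes by \v{C}ech cocycles $\tilde a=(-1)^p r+a\in M^p_p$, compute the Beilinson--Deligne cup product on the cocycle level (the paper's Appendix supplies exactly the $(-1)^{ps}$-twisted formula, giving $[r\cup\tilde b+a\cup db^{0,q-1}]$), push through the recognition isomorphism of Theorem 6.5, match against the hyperspark product $[a\cup f+(-1)^{k+1}r\cup b]$ with $k=p-1$, and then invoke the Harvey--Lawson--Zweck ring isomorphism $\hat{\mathbf H}^*(X)\cong\hat H^*(X)$. The only cosmetic deviation is that you propose to deduce graded commutativity of the hyperspark product from the Deligne side, whereas the paper obtains it from Theorem 4.5; this does not affect the argument.
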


\begin{proof}
It is shown in \cite{HLZ} that $\hat{\mathbf{H}}^*(X)$ and $\hat{H}^*(X)$ are isomorphic as rings.
So we only need to verify that the product on $H^*_{\mathcal{D}}(X,\mathbb{Z}(*)^{\infty})$ agrees
with the product on $\hat{\mathbf{H}}^{*}(X)$.

We can make use of the isomorphism:
$$H^p_{\mathcal{D}}(X,\mathbb{Z}(p)^{\infty})\cong H^p(M^*_p)\cong \hat{\mathbf{H}}^{p-1}_{smooth}(X).$$
First, fix $$\alpha\in H^p_{\mathcal{D}}(X,\mathbb{Z}(p)^{\infty})\text{ and } \beta\in H^q_{\mathcal{D}}(X,\mathbb{Z}(q)^{\infty}),$$ and let
$$\tilde{a}=r+a=r+\sum_{i=0}^{p-1}a^{i,p-1-i}\in M^p_p \text{ be a representative of }\alpha$$
and  $$\tilde{b}=s+b=s+\sum_{i=0}^{q-1}b^{i,q-1-i}\in M^q_q\text{ be a representative of }\beta$$
where
$$r\in C^p(\mathcal{U},\mathbb{Z}), a^{i,p-1-i}\in C^i(\mathcal{U},\mathcal{E}^{p-1-i}),$$
and
$$s\in C^q(\mathcal{U},\mathbb{Z}), b^{i,q-1-i}\in C^i(\mathcal{U},\mathcal{E}^{q-1-i}).$$

On one hand, we calculate $\alpha\cup\beta$ by original product formula (See Appendix):
$$\alpha\cup\beta=[r\cup\tilde{b}+a\cup db^{0,q-1}].$$

On the other hand, let $[a]$ and $[b]$ be the image of $\alpha$ and $\beta$
under the isomorphism $H^k(M^*_k)\cong \hat{\mathbf{H}}^{k-1}_{smooth}(X)$, $k=p,q$ with
spark equations $$Da=e-(-1)^pr \text{ and } Db=f-(-1)^qs \text{ where }
e=da^{0,p-1}, f=db^{0,q-1} \text{ are global forms}.$$
We apply product formula on $\hat{\mathbf{H}}^{*}_{smooth}(X)$, and get
$$[a][b]=[a\cup f+(-1)^p(-1)^pr\cup b]=[a\cup db^{0,q-1}+r\cup b]$$
which is the image of $[r\cup\tilde{b}+a\cup db^{0,q-1}]=
[a\cup db^{0,q-1}+r\cup b+r\cup s]$ under the isomorphism
of $H^{p+q}(M^*_{p+q})\cong \hat{\mathbf{H}}^{p+q-1}_{smooth}(X)$.

We get the products are the same.

\end{proof}

\section{Appendix. Products on Hypercohomology}
If we have three complexes of sheaves of abelian groups
$\mathcal{F}^*,\mathcal{G}^*,\mathcal{H}^*$
over a manifold $X$ and a cup product
$$\cup: \mathcal{F}^*\otimes\mathcal{G}^*\longrightarrow\mathcal{H}^*$$
which commutes with differentials, then $\cup$ induces
an product on their hypercohomology:
$$\cup: \mathbb{H}^*(X,\mathcal{F}^*)\otimes\mathbb{H}^*(X,\mathcal{G}^*)
\longrightarrow\mathbb{H}^*(X,\mathcal{H}^*).$$

Although the above fact is well known, it is hard to find reference on
how to realize the product on the cycle level. We write this appendix to give
an explicit formula of the induced product on \v{C}ech cycles which is useful
in the proof of Theorem 6.7.

Let us start from an easy case. Suppose we have three sheaves
$\mathcal{F},\mathcal{G},\mathcal{H}$ over $X$ and a cup product
$$\cup: \mathcal{F}\otimes\mathcal{G}\longrightarrow\mathcal{H}.$$

Fix an open covering $\mathcal{U}$ of $X$, we have \v{C}ech resolution of $\mathcal{F}$:
$$\mathcal{F}\rightarrow \mathcal{C}^0(\mathcal{U},\mathcal{F})\rightarrow
\mathcal{C}^1(\mathcal{U},\mathcal{F})\rightarrow\cdots$$
\v{C}ech cohomology of sheaf $\mathcal{F}$ with respect to $\mathcal{U}$ is defined as
$$\check{H}^*(\mathcal{U},\mathcal{F})
\equiv H^*(C^*(\mathcal{U},\mathcal{F}))$$
where $C^k(\mathcal{U},\mathcal{F})$ is group of global sections of sheaf
$\mathcal{C}^k(\mathcal{U},\mathcal{F})$.
When the open covering $\mathcal{U}$ is acyclic with respect to $\mathcal{F}$,
we have the canonical isomorphism
$$H^*(X,\mathcal{F})\cong\check{H}^*(\mathcal{U},\mathcal{F}).$$

Now we construct a morphism of complexes:
$$\phi:Tot(C^*(\mathcal{U},\mathcal{F})\otimes C^*(\mathcal{U},\mathcal{G}))
\longrightarrow C^*(\mathcal{U},\mathcal{F}\otimes \mathcal{G}).$$
For $a\in C^r(\mathcal{U},\mathcal{F})$, $b\in C^s(\mathcal{U},\mathcal{G})$
we put $$\phi(a\otimes b)_{i_0,\cdots,i_{r+s}}=a_{i_0,\cdots,i_r}\otimes b_{i_r,\cdots,i_{r+s}}$$

We fix the differential
$D=\delta_{\mathcal{F}}\otimes id+id\otimes(-1)^r\delta_{\mathcal{G}}$
on the total complex of double complex
$$\bigoplus_{r,s}C^r(\mathcal{U},\mathcal{F})\otimes C^s(\mathcal{U},\mathcal{G}),$$
where $\delta_{\mathcal{F}}$, $\delta_{\mathcal{G}}$ are \v{C}ech differentials on
$C^*(\mathcal{U},\mathcal{F})$ and $C^*(\mathcal{U},\mathcal{G})$ respectively.
It is easy to verify that $\phi$ is a chain map, i.e. commutative with differentials. Therefore,
$\phi$ induce a map $$\phi_*: H^*(Tot(C^*(\mathcal{U},\mathcal{F})\otimes C^*(\mathcal{U},\mathcal{G})))
\longrightarrow H^*(C^*(\mathcal{U},\mathcal{F}\otimes \mathcal{G}))\equiv
\check{H}^*(\mathcal{U},\mathcal{F}\otimes \mathcal{G}).$$

Also, $\cup: \mathcal{F}\otimes\mathcal{G}\longrightarrow\mathcal{H}$ induces a map
on \v{C}ech cohomology
$$\cup_*: \check{H}^*(\mathcal{U},\mathcal{F}\otimes \mathcal{G})\longrightarrow
\check{H}^*(\mathcal{U},\mathcal{H}).$$
Moreover, there is a natural map
$$\check{H}^*(\mathcal{U},\mathcal{F})\otimes \check{H}^*(\mathcal{U},\mathcal{G})
\longrightarrow H^*(Tot(C^*(\mathcal{U},\mathcal{F})\otimes C^*(\mathcal{U},\mathcal{G})))$$
induced by
$$C^*(\mathcal{U},\mathcal{F})\otimes C^*(\mathcal{U},\mathcal{G})
\longrightarrow Tot(C^*(\mathcal{U},\mathcal{F})\otimes C^*(\mathcal{U},\mathcal{G}))).$$

Finally, we get a map
$$\check{H}^*(\mathcal{U},\mathcal{F})\otimes \check{H}^*(\mathcal{U},\mathcal{G})
\longrightarrow H^*(Tot(C^*(\mathcal{U},\mathcal{F})\otimes C^*(\mathcal{U},\mathcal{G})))
\stackrel{\phi_*}\longrightarrow
\check{H}^*(\mathcal{U},\mathcal{F}\otimes \mathcal{G})\stackrel{\cup_*}\longrightarrow
\check{H}^*(\mathcal{U},\mathcal{H}).$$
And it is easy to see, for two \v{C}ech cycles
$a\in C^r(\mathcal{U},\mathcal{F})$ and $b\in C^s(\mathcal{U},\mathcal{G})$,
the cup product of $[a]$ and $[b]$ can be represented by $a\cup b$ which is defined by
$$(a\cup b)_{i_0,\cdots,i_{r+s}}=a_{i_0,\cdots,i_r}\cup b_{i_r,\cdots,i_{r+s}}$$

When the covering $\mathcal{U}$ is acyclic with respect to $\mathcal{F}$,
$\mathcal{G}$ and $\mathcal{H}$, we define
$$\cup: H^*(X,\mathcal{F})\otimes H^*(X,\mathcal{G})
\longrightarrow H^*(X,\mathcal{H})$$ by above construction.

Now we consider the case of hypercohomology of complexes of sheaves.
From now on, we assume the covering
$\mathcal{U}$ is acyclic with respect to all sheaves we deal with and identify \v{C}ech
(hyper)cohomology with sheaf (hyper)cohomology.

We shall prove the following theorem with explicit product formula on \v{C}ech cycles.
\begin{thm}
Let $(\mathcal{F}^*,d_{\mathcal{F}}),(\mathcal{G}^*,d_{\mathcal{G}}),(\mathcal{H}^*,d_{\mathcal{H}})$
be complexes of sheaves of abelian groups
over a manifold $X$. If there is a cup product
$$\cup: \mathcal{F}^*\otimes\mathcal{G}^*\longrightarrow\mathcal{H}^*$$
which commutes with differentials, then $\cup$ induces
an product on their hypercohomology:
$$\cup: \mathbb{H}^*(X,\mathcal{F}^*)\otimes\mathbb{H}^*(X,\mathcal{G}^*)
\longrightarrow\mathbb{H}^*(X,\mathcal{H}^*).$$

\end{thm}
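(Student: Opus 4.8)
The plan is to realize the induced product at the cochain level using a \v{C}ech resolution, generalizing the argument already carried out for single sheaves in the first half of this appendix. First I would fix an acyclic cover $\mathcal{U}$ and take \v{C}ech resolutions of the three complexes of sheaves, so that $\mathbb{H}^*(X,\mathcal{F}^*)$ is computed as the cohomology of the total complex $\mathrm{Tot}(C^*(\mathcal{U},\mathcal{F}^*))$ with differential $D_{\mathcal{F}}=\delta+(-1)^r d_{\mathcal{F}}$ (the sign on the vertical differential in the $r$-th \v{C}ech row, exactly as in the double complex displayed in the proof of Theorem 6.7), and similarly for $\mathcal{G}^*$ and $\mathcal{H}^*$. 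The acyclicity of $\mathcal{U}$ with respect to all sheaves involved is what lets us identify these total-complex cohomologies with the sheaf hypercohomology groups.

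Next I would build the comparison map
$$\phi:\mathrm{Tot}\big(C^*(\mathcal{U},\mathcal{F}^*)\otimes C^*(\mathcal{U},\mathcal{G}^*)\big)\longrightarrow C^*(\mathcal{U},\mathcal{F}^*\otimes\mathcal{G}^*),$$
defined on a bidegree-$(r,j)$ times bidegree-$(s,k)$ piece by the same \v{C}ech Alexander--Whitney formula
$$\phi(a\otimes b)_{i_0,\dots,i_{r+s}}=(-1)^{js}\,a_{i_0,\dots,i_r}\otimes b_{i_r,\dots,i_{r+s}},$$
where the sign $(-1)^{js}$ is forced by the same bookkeeping that produced the sign in Proposition 4.2; one checks $\phi$ is a chain map for the total differentials. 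Composing $\phi$ with the map on \v{C}ech cochains induced by $\cup:\mathcal{F}^*\otimes\mathcal{G}^*\to\mathcal{H}^*$ (which commutes with differentials by hypothesis, hence passes to the total complex of the \v{C}ech resolution), and precomposing with the natural map $C^*(\mathcal{U},\mathcal{F}^*)\otimes C^*(\mathcal{U},\mathcal{G}^*)\to\mathrm{Tot}(C^*(\mathcal{U},\mathcal{F}^*)\otimes C^*(\mathcal{U},\mathcal{G}^*))$, yields a chain-level pairing whose effect on cohomology is the desired cup product. Concretely, for cocycles $a\in\mathrm{Tot}^m(C^*(\mathcal{U},\mathcal{F}^*))$ with components $a^{r,j}$ ($r+j=m$) and $b$ with components $b^{s,k}$, the product is represented by
$$(a\cup b)_{i_0,\dots,i_{r+s}}=(-1)^{js}\,a^{r,j}_{i_0,\dots,i_r}\cup b^{s,k}_{i_r,\dots,i_{r+s}}\in C^{r+s}(\mathcal{U},\mathcal{H}^{j+k}),$$
summed over the decompositions of the two total degrees.

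The main steps, in order: (i) set up the \v{C}ech double complexes and the sign conventions on $D_{\mathcal{F}},D_{\mathcal{G}},D_{\mathcal{H}}$; (ii) write down $\phi$ and verify it is a morphism of complexes --- this is the one genuine computation, and it is the same Leibniz-type sign chase as in Proposition 4.2; (iii) observe $\cup_*$ on \v{C}ech resolutions is a chain map since $\cup$ commutes with differentials on the sheaf level; (iv) assemble the composite and read off that on cohomology it is independent of all choices and is bilinear, associative, and compatible with the sheaf-level $\cup$; (v) conclude that the formula above represents the hypercohomology product, which is precisely what Theorem 6.7 needs. The step I expect to be the main obstacle is (ii): getting the sign $(-1)^{js}$ exactly right so that $\phi$ intertwines the \emph{total} differentials $\delta\pm d$ on both sides --- the \v{C}ech shuffle and the internal-degree Koszul sign interact, and a wrong sign there would break the chain-map property even though the underlying combinatorics is routine. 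Everything after that is formal nonsense about induced maps on cohomology.
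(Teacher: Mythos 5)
Your proposal matches the paper's own proof essentially step for step: the same Čech resolutions with total differential $\delta+(-1)^r d$, the same Alexander--Whitney-type chain map $\phi$ with the sign $(-1)^{js}$ (the paper writes $(-1)^{ps}$, the internal degree of the first factor times the Čech degree of the second), the same factorization through $\mathrm{Tot}(C^*(\mathcal{U},\mathcal{F}^*)\otimes C^*(\mathcal{U},\mathcal{G}^*))$ and $\cup_*$, and the same explicit cycle-level formula $\sum (-1)^{ps}a^{r,p}\cup b^{s,q}$. The one computation you flag as the main obstacle, verifying that $\phi$ intertwines the total differentials, is exactly the sign chase the paper carries out, modeled on Proposition 4.2, so the approach is sound and identical.
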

\begin{proof}
Fix an open covering $\mathcal{U}$ of $X$, for complexes of sheaves
$\mathcal{A}^*$ ( $\mathcal{A}^*=\mathcal{F}^*$, $\mathcal{G}^*$ or
$\mathcal{H}^*$ ) we have \v{C}ech resolution of $\mathcal{A^*}$:
$$\mathcal{A^*}\rightarrow \mathcal{C}^*(\mathcal{U},\mathcal{A^*}).$$
Then
$$\mathbb{H}^*(X,\mathcal{A^*})\equiv
\mathbb{H}^*(Tot(\mathcal{C}^*(\mathcal{U},\mathcal{A^*})))
\equiv H^*(Tot(C^*(\mathcal{U},\mathcal{A^*})))$$
where $C^*(\mathcal{U},\mathcal{A^*})$ are groups of global sections of sheaves
$\mathcal{C}^*(\mathcal{U},\mathcal{A^*})$ and
$Tot(C^*(\mathcal{U},\mathcal{A^*}))$ is the total complex of double complex
$\bigoplus_{r,p}C^r(\mathcal{U},\mathcal{A}^p)$ with total differential
$D_{\mathcal{A}}=\delta+(-1)^rd_{\mathcal{A}}$.

Similar to the case when $\mathcal{F}^*$, $\mathcal{G}^*$ are
$\mathcal{H}^*$ are single sheaves, the cup product
$$\cup: \mathbb{H}^*(X,\mathcal{F}^*)\otimes\mathbb{H}^*(X,\mathcal{G}^*)
\longrightarrow\mathbb{H}^*(X,\mathcal{H}^*)$$
is defined as composition of three maps
$$\mathbb{H}^*(X,\mathcal{F}^*)\otimes \mathbb{H}^*(X,\mathcal{G}^*)
\longrightarrow H^*(Tot(C^*(\mathcal{U},\mathcal{F}^*)\otimes C^*(\mathcal{U},\mathcal{G}^*)))
\stackrel{\phi_*}\longrightarrow
\mathbb{H}^*(X,\mathcal{F}^*\otimes \mathcal{G}^*)\stackrel{\cup_*}\longrightarrow
\mathbb{H}^*(X,\mathcal{H}^*).$$

1) The first map is induced by
induced by
$$Tot(C^*(\mathcal{U},\mathcal{F}^*))\otimes Tot(C^*(\mathcal{U},\mathcal{G}^*))
\longrightarrow Tot(C^*(\mathcal{U},\mathcal{F}^*)\otimes C^*(\mathcal{U},\mathcal{G}^*)))$$
where $Tot(C^*(\mathcal{U},\mathcal{F}^*)\otimes C^*(\mathcal{U},\mathcal{G}^*))$
is the total complex of complex
$$\bigoplus_{r,s,p,q}(C^r(\mathcal{U},\mathcal{F}^p))\otimes (C^s(\mathcal{U},\mathcal{G}^q))$$
with differential $D=D_{\mathcal{F}}\otimes id+id\otimes (-1)^{r+p}D_{\mathcal{G}}$.

2) Now we construct $\phi$ which induces the second map $\phi_*$:
$$\phi:Tot(C^*(\mathcal{U},\mathcal{F}^*)\otimes C^*(\mathcal{U},\mathcal{G}^*))
\longrightarrow Tot(C^*(\mathcal{U},\mathcal{F}^*\otimes \mathcal{G}^*)).$$
For $a\in C^r(\mathcal{U},\mathcal{F}^p)$, $b\in C^s(\mathcal{U},\mathcal{G}^q)$
we define $\phi(a\otimes b)\in C^{r+s}(\mathcal{U},\mathcal{F}^p\otimes\mathcal{G}^q))$ by
$$\phi(a\otimes b)_{i_0,\cdots,i_{r+s}}=(-1)^{ps}a_{i_0,\cdots,i_r}\otimes b_{i_r,\cdots,i_{r+s}}.$$

Note that $\mathcal{F}^*\otimes \mathcal{G}^*$ is the total complex (of sheaves)
of double complex $\bigoplus_{p,q}\mathcal{F}^p\otimes \mathcal{G}^q$ with differential
$d_{\mathcal{F}\otimes\mathcal{G}} =d_{\mathcal{F}}\otimes id +id\otimes (-1)^pd_{\mathcal{G}}$.
And $Tot(C^*(\mathcal{U},\mathcal{F}^*\otimes \mathcal{G}^*))$ is the total complex of
$\bigoplus_{r,p,q} C^r(\mathcal{U},\mathcal{F}^p\otimes \mathcal{G}^q)$ with differential
$D_{\mathcal{F}\otimes\mathcal{G}}=\delta+(-1)^rd_{\mathcal{F}\otimes\mathcal{G}}$.

We have to verify that $\phi$ is a chain map, i.e. commutative with differentials.
In fact, the purpose that we put a sign $(-1)^{ps}$ in the definition of $\phi$ is to make
$\phi$ to be a chain map.

The calculation here is essentially same as the one in the proof of Proposition 4.2.
For an element
$a\otimes b\in C^r(\mathcal{U},\mathcal{F}^p)\otimes C^s(\mathcal{U},\mathcal{G}^q)$,

\begin{eqnarray*}
& &\phi(D(a\otimes b))\\
&=&\phi(D_{\mathcal{F}}a\otimes b+(-1)^{r+p}a\otimes D_{\mathcal{G}}b)\\
&=&\phi((\delta a+(-1)^rd_{\mathcal{F}}a)\otimes b+
(-1)^{r+p}a\otimes (\delta b+(-1)^sd_{\mathcal{G}}b))\\
&=&\phi(\delta a\otimes b+(-1)^rd_{\mathcal{F}}a\otimes b+
(-1)^{r+p}a\otimes\delta b+(-1)^{r+p+s}a\otimes d_{\mathcal{G}}b)\\
&=&(-1)^{ps}\delta a\otimes b+(-1)^{(p+1)s+r}d_{\mathcal{F}}a\otimes b+
(-1)^{p(s+1)+r+p}a\otimes\delta b+(-1)^{ps+r+p+s}a\otimes d_{\mathcal{G}}b\\
           &=&(-1)^{ps}\delta a\otimes b+(-1)^{ps+s+r}d_{\mathcal{F}}a\otimes b+
(-1)^{ps+r}a\otimes\delta b+(-1)^{ps+r+p+s}a\otimes d_{\mathcal{G}}b
               \end{eqnarray*}
Note that I abuse the natation $\otimes$ which have different meanings in
the domain and image of $\phi$.

\begin{eqnarray*}
& &D_{\mathcal{F}\otimes\mathcal{G}}(\phi(a\otimes b))\\
&=&D_{\mathcal{F}\otimes\mathcal{G}}((-1)^{ps}(a\otimes b))\\
&=&(\delta+(-1)^{r+s}d_{\mathcal{F}\otimes\mathcal{G}})((-1)^{ps}(a\otimes b))\\
&=&(-1)^{ps}\delta(a\otimes b)+(-1)^{ps+r+s}d_{\mathcal{F}\otimes\mathcal{G}}(a\otimes b)\\
&=&(-1)^{ps}(\delta a\otimes b+(-1)^ra\otimes\delta b)+
(-1)^{ps+r+s}(d_{\mathcal{F}}a\otimes b+(-1)^pa\otimes d_{\mathcal{G}}b)\\
&=&(-1)^{ps}\delta a\otimes b+(-1)^{ps+r}a\otimes\delta b+
(-1)^{ps+s+r}d_{\mathcal{F}}a\otimes b+(-1)^{ps+r+p+s}a\otimes d_{\mathcal{G}}b\\
&=&\phi(D(a\otimes b))
               \end{eqnarray*}

Therefore, $\phi$ is a chain map and
induces a map $$\phi_*: H^*(Tot(C^*(\mathcal{U},\mathcal{F}^*)\otimes C^*(\mathcal{U},\mathcal{G}^*)))
\longrightarrow H^*(C^*(\mathcal{U},\mathcal{F}^*\otimes \mathcal{G}^*))\equiv
\mathbb{H}^*(X,\mathcal{F}^*\otimes \mathcal{G}^*).$$

3) Because $\cup$ commutes with differentials, it is easy to see $\cup$ induces a chain map:
$$Tot(C^*(\mathcal{U},\mathcal{F}^*\otimes \mathcal{G}^*)) \longrightarrow
Tot(C^*(\mathcal{U},\mathcal{H}^*)).$$
Therefore, we have an induced map on hypercohomology
$$\cup_*: \mathbb{H}^*(X,\mathcal{F}^*\otimes \mathcal{G}^*) \longrightarrow
\mathbb{H}^*(X,\mathcal{H}^*).$$

From above process, we can realize the cup product on \v{C}ech cycles.
Assume $\alpha\in \mathbb{H}^k(X,\mathcal{F}^*)$ and
$\beta\in \mathbb{H}^l(X,\mathcal{G}^*)$. Let
$a=\sum_{r+p=k}a^{r,p}\in \bigoplus_{r+p=k}C^r(\mathcal{U},\mathcal{F}^p)$
be a \v{C}ech cycle representing $\alpha$ and
$b=\sum_{s+q=l}b^{s,q}\in \bigoplus_{s+q=l}C^s(\mathcal{U},\mathcal{G}^q)$
be a \v{C}ech cycle representing $\beta$.

We define $$a\cup b\equiv \sum_{r+p=k,s+q=l}(-1)^{ps}a^{r,p}\cup b^{s,q},$$
then $\alpha\cup\beta$ is represented by $a\cup b$.

\end{proof}

\begin{rmk}
The cup product
$$\cup: \mathcal{F}^*\otimes\mathcal{G}^*\longrightarrow\mathcal{H}^*,
\qquad \cup(a\otimes b)\equiv a\cup b$$
commutes with differentials, in other word,
$$d_{\mathcal{H}}(a\cup b)=\cup(d_{\mathcal{F}\otimes\mathcal{G}}(a\otimes b))
=\cup(d_{\mathcal{F}}a\otimes b+(-1)^{\deg a}a\otimes d_{\mathcal{G}} b)=
d_{\mathcal{F}}a\cup b+(-1)^{\deg a}a\cup d_{\mathcal{G}} b$$
i.e. $\cup$ satisfies the Leibniz rule.

In the proof of Theorem 6.1, we show the cup product on the
\v{C}ech cycle level
$$Tot(C^*(\mathcal{U},\mathcal{F}^*))\otimes Tot(C^*(\mathcal{U},\mathcal{G}^*))
\longrightarrow Tot(C^*(\mathcal{U},\mathcal{H}^*))$$
satisfies the Leibniz rule.
Therefore, we have a well-defined product on hypercohomology.
\end{rmk}

Let us go back to the cup product of smooth Deligne cohomology.

The cup product
$$\cup: \mathbb{Z}_{\mathcal{D}}(p)^{\infty}\otimes \mathbb{Z}_{\mathcal{D}}(q)^{\infty}
\rightarrow \mathbb{Z}_{\mathcal{D}}(p+q)^{\infty}$$ is defined by
$$x\cup y=\left\{
    \begin{array}{ll}
      x\cdot y & \hbox{if } \deg x=0;\\
      x\wedge dy & \hbox{if } \deg x>0 \hbox{ and } \deg y=q;\\
      0 & \hbox{otherwise.}
    \end{array}
  \right.$$

Assume $$\alpha\in H^p_{\mathcal{D}}(X,\mathbb{Z}(p)^{\infty})\text{ and } \beta\in H^q_{\mathcal{D}}(X,\mathbb{Z}(q)^{\infty}),$$ and let
$$\tilde{a}=r+a=r+\sum_{i=0}^{p-1}a^{i,p-1-i}\in M^p_p \text{ be a representative of }\alpha$$
and  $$\tilde{b}=s+b=s+\sum_{i=0}^{q-1}b^{i,q-1-i}\in M^q_q\text{ be a representative of }\beta$$
where
$$r\in C^p(\mathcal{U},\mathbb{Z}), a^{i,p-1-i}\in C^i(\mathcal{U},\mathcal{E}^{p-1-i}),$$
and
$$s\in C^q(\mathcal{U},\mathbb{Z}), b^{i,q-1-i}\in C^i(\mathcal{U},\mathcal{E}^{q-1-i}).$$

Then we calculate
\begin{eqnarray*}
\alpha\cup\beta &=&[\tilde{a}\cup \tilde{b}]\\
&=&[rs+\Sigma_i(-1)^{0\cdot i}r\cdot b^{i,q-1-i}+\Sigma_j(-1)^{(p-j)0}a^{j,p-1-j}\wedge db^{0,q-1}]\\
&=&[r\cdot \tilde{b}+a \wedge db^{0,q-1}]\\
&=&[r\cup\tilde{b}+a\cup db^{0,q-1}]
\end{eqnarray*}

Note that in the last line above, we use $\cup$ in the sense of Proposition 4.2.

\bigskip

\noindent {\sc Ning Hao }\\
Mathematics Department, SUNY at Stony Brook\\
Stony Brook, NY 11794, US\\
Email: {\tt nhao@math.sunysb.edu}

\end{document}